% SIAM Article Template
\documentclass[a4paper]{siamart250211}

% SIAM Shared Information Template
% This is information that is shared between the main document and any
% supplement. If no supplement is required, then this information can
% be included directly in the main document.

\usepackage{graphicx}%
\usepackage{multirow}%
\usepackage{amsmath,amssymb,amsfonts}%
\usepackage{mathrsfs}%
\usepackage[title]{appendix}%
\usepackage{xcolor}%
\usepackage{textcomp}%
\usepackage{manyfoot}%
\usepackage{booktabs}%%  引入三线表宏包
\usepackage{algorithm}%
\usepackage{algorithmicx}%
\usepackage{algpseudocode}%
\usepackage{listings}%
\usepackage{lineno,hyperref}
\usepackage{amscd}
\usepackage{bm}
\usepackage{color}
\usepackage{cleveref}
\usepackage{epstopdf}
\usepackage{float}
\usepackage{lmodern}
\usepackage{subcaption}
\usepackage{todonotes}
\usepackage{nicefrac}
\usepackage{units}
%unknowns
\newcommand\uu{\boldsymbol{u}}

\newcommand\zz{\boldsymbol{z}}
\newcommand\vv{\boldsymbol{v}}
\newcommand\ww{\boldsymbol{w}}
\newcommand\ff{\boldsymbol{f}}
\newcommand\gf{\boldsymbol{g}}
\newcommand\bx{\boldsymbol{x}}
\newcommand\bzero{\boldsymbol{0}}

\newcommand\bs{\boldsymbol{s}}
\newcommand\ppsi{\boldsymbol{\psi}}

%notes
\newcommand{\ddiv}{\mathrm{div}\,}

\newcommand\OO{\Omega}
\newcommand{\zero}{\boldsymbol{0}}
\newcommand{\vecb}[1]{\boldsymbol{#1}}
\newcommand\II{\mathbb{I}}
\newcommand{\Pim}{\Pi_{\boldsymbol{V}^-}}
\newcommand{\Pihm}{\Pi_{\boldsymbol{V}^-_h}}

\DeclareMathOperator{\re}{\text{Re}}
\DeclareMathOperator{\im}{\text{Im}}
\newcommand{\Ih}{\mathcal{\bf I}_h}

%boundary
\newcommand\GU{\Gamma_{d}}

\newcommand\GT{\Gamma_{r}}
%spaces
\newcommand\UU{\boldsymbol U}
\newcommand\VV{\boldsymbol V}
\newcommand\ZZ{\boldsymbol Z}
\newcommand\RR{\mathbb R}
%norms
%\newcommand{\nnorm}[2]{\Vert #1\Vert_{#2}}
%\newcommand{\nnormz}[1]{\nnorm{#1}{0}}
\newcommand{\nnormOne}[1]{\Vert #1\Vert_{1}}
\newcommand{\nnormZero}[1]{\Vert #1\Vert_{0}}
\newcommand{\nnormDiv}[1]{\Vert #1\Vert_{\mathrm{div}}}
\newcommand{\uprod}[2]{\left(#1,#2\right)}
%vector
\newcommand\uve{\vec{\uu}}
\newcommand\vve{\vec{\vv}}
%new environment
%\newtheorem{theorem}{Theorem}

%\newtheorem{lemma}[theorem]{Lemma}

%\newtheorem{definition}[theorem]{Definition}
%\newtheorem{example}[]{Example}
%\theoremstyle{remark}

%\newtheorem{problem}[theorem]{Problem}[section]
\newtheorem{remark}[theorem]{Remark}
\newtheorem{assumption}[theorem]{Assumption}

%math
\DeclareMathOperator{\Span}{span}
\newcommand\Trih{{\mathcal T}_h}
\renewcommand{\(}{\left(}
\renewcommand{\)}{\right)}

\ifpdf
  \DeclareGraphicsExtensions{.eps,.pdf,.png,.jpg}
\else
  \DeclareGraphicsExtensions{.eps}
\fi

% Add a serial/Oxford comma by default.

% Used for creating new theorem and remark environments
%\newsiamremark{remark}{Remark}
%\newsiamremark{hypothesis}{Hypothesis}
%\crefname{hypothesis}{Hypothesis}{Hypotheses}
%\newsiamthm{claim}{Claim}
%\newsiamremark{fact}{Fact}
%\crefname{fact}{Fact}{Facts}

% Sets running headers as well as PDF title and authors
\headers{Wave propagation in thermo-poroelasticity}{H. Li, C. Carcamo, H. Rui, and  V. John}

% Title. If the supplement option is on, then "Supplementary Material"
% is automatically inserted before the title.
\title{Coupling of conforming and mixed finite element methods for a model of wave propagation in thermo-poroelasticity in the frequency domain
\thanks{Submitted to the editors.
\funding{The work of Hongpeng Li and Hongxing Rui was supported by the National Natural Science Foundation of China (No. 12131014).}}}

% Authors: full names plus addresses.
\author{Hongpeng Li \thanks{School of Mathematics, Shandong University, Jinan 250100, China (\email{lihongpeng\_sd@163.com}).}
\and Cristian C\'arcamo \thanks{Weierstrass Institute for Applied Analysis and Stochastics, Anton-Wilhelm-Amo Str.~39, 10117 Berlin, Germany  (\email{cristian.carcamosanchez@wias-berlin.de}).}
\and Hongxing Rui \thanks{School of Mathematics, Shandong University, Jinan 250100, China (\email{hxrui@sdu.edu.cn}).}
\and Volker John \thanks{Weierstrass Institute for Applied Analysis and Stochastics, Anton-Wilhelm-Amo Str.~39, 10117 Berlin, Germany and Freie Universit\"at of Berlin, Department of Mathematics and Computer Science, Arnimallee 6, 14195 Berlin, Germany (\email{volker.john@wias-berlin.de}).}
}

\allowdisplaybreaks

%%% Local Variables: 
%%% mode:latex
%%% TeX-master: "ex_article"
%%% End: 

% Optional PDF information
%\ifpdf
%\hypersetup{
	%  pdftitle={An Example Article},
	%  pdfauthor={D. Doe, P. T. Frank, and J. E. Smith}
	%}
%\fi

\begin{document}
	
	\maketitle
	
	% REQUIRED
	\begin{abstract}
		A dynamic linear thermo-poroelasticity model, containing inertial and relaxation terms with second-order time derivatives, is investigated 
		in this paper. 
		The mathematical and numerical analysis of this model is performed in the frequency domain.
		The variational formulation is analyzed within the framework of Fredholm’s alternative and T-coercivity. 
		Under appropriate assumptions on the coefficients, the well-posedness of the problem is proved. For its discretization, 
		we propose a stabilized coupling of conforming and mixed finite element spaces, which are free of volumetric locking, and pressure as well as temperature oscillations.
		By incorporating projections in certain sesquilinear forms, the well-posedness of the finite element solution can be obtained through  
		a similar reasoning as in the continuous case.
		Optimal error estimates are derived for all variables.
		Numerical studies validate the accuracy and robustness of the proposed method.
	\end{abstract}

	% REQUIRED
	\begin{keywords}
		thermo-poroelasticity,  frequency domain, wave propagation, locking-free and oscillation-free mixed finite element methods, well-posedness, convergence
	\end{keywords}
	
	% REQUIRED
	\begin{MSCcodes}
		65N12, 74F05, 74J05, 74S05
	\end{MSCcodes}
	
	%\input{./Chapter-todo.tex}
	%%%%%%%%%%%%%%%%%%%%%%%%%%%%%%%%%%%%%%%%%%%
\section{Introduction}
%!!!!!!!!!!!!!!!!!!不求一次写好，基于现在的tex，以及看过的几篇模型的论文，先写一个，后期遇到别的漂亮话，再补充!!!!!!!!!!!!!!!!!!!!!!!!!!!!!!!!!!
%!!!!!!!!!!!!!!!AI Detector!!!!!!!!!!!!!!!!!!!!
%!!!!!!!!!!!!!!!写完初稿，用grammarly检查，开通pro试用!!!!!!!!!!!!!!!!!!!!

The studied thermo-poroelasticity model describes the coupling between mechanical deformation, fluid flow, and heat transfer in porous media. 
As an extension of the isothermal poroelasticity \cite{biot1941general}, this model incorporates the influence of temperature on mechanical displacement and fluid flow.
Modeling thermo-poroelasticity is of great significance in many applications, including geothermal energy systems, carbon dioxide sequestration, petroleum engineering \cite{wang2003coupled}, and biomedical engineering \cite{Ge2022new}.

%模型的角度。方程复杂，分析很少
Biot \cite{Biot1956Thermoelasticity} first introduced a wave propagation model in poroelastic media which includes thermal effects. 
However, using only the Fourier law of heat conduction, this model might give nonphysical results, such as an infinite diffusion velocity.
Lord and Shulman \cite{lord1967generalized} incorporated a relaxation term into the heat equation, thereby leading to a hyperbolic system and avoiding infinite velocity. 
Carcione et al.\cite{carcione2019simulation} first simulated wave propagation in thermo-poroelastic media with physically realistic velocities by introducing a relaxation term, based on the theory of Lord and Shulman.
By using the Fourier transform of the thermodynamic equations, Wang et al.\cite{wang2020green} developed a frequency-domain Green's function that describes the displacement and temperature response to an elastic or thermal point source.

%模型的存在唯一解，数值方法
Constructing numerical schemes for thermo-poroelasticity is highly challenging due to its complex coupling properties. 
From this perspective, several analytical frameworks and numerical schemes for (thermo-)poroelasticity have been established in the literature. 
Recent studies, such as \cite{Santos2021Existence}, demonstrated the existence and uniqueness of wave propagation in linear thermo-poroelastic isotropic media, and proved its regularity in both space and time variables.
Barr{\'e} et al.\cite{barre2022analysis} established the existence of weak solutions for nearly incompressible poromechanical materials
using a semigroup approach.
Discontinuous Galerkin (DG) methods for wave propagation phenomenon in thermo-poroelasticity and poro-elasto-acoustic coupled problem are detailed in \cite{Antonietti2022high,JCPBonettiNumerical}.
Zhao \cite{zhao2025staggered} proposed a staggered DG scheme to simulate wave propagation in poroelasticity on general polygonal meshes, which is robust with respect to grid shapes and hanging nodes.
%接下来提到拟静态多孔弹性，没有惯性以及松弛项
Finally, we mention some recent studies on quasi-static thermo-poroelasticity, which neglects inertial and relaxation effects, but contains a nonlinear convective term in the heat equation.
Various numerical methods of this model 
have been proposed and studied, 
%provide a foundational framework for understanding and further development, 
including the enriched Galerkin method \cite{Yi2024Physics}, the multiphysics finite element method \cite{Ge2022Multiphysics}, 
splitting and iterative decoupling techniques \cite{Brun2020Monolithic,Kolesov2014Splitting}, DG methods \cite{AntoniettiDiscontinuous}, and preconditioning techniques \cite{cai2025parameter}.

%重点介绍波
The fully dynamic thermo-poroelasticity describes the wave propagation in the thermo-poroelastic media.
Biot's theory \cite{Biot1962Mechanics} for fluid-saturated poroelastic media predicted the existence of two compressional (P) waves, one fast and the other one slow, and one shear wave, without taking into account the temperature. The slow wave is diffusive at low frequencies and propagative at high frequencies.
Rudgers \cite{rudgers1990analysis} described the behavior of thermoacoustic waves, in particular velocities and absorption coefficients as functions of frequency. Further, coupling with temperature yields an additional slow thermal (T) wave \cite{sharma2008wave}. 
It exhibits a diffusive character at low thermal conductivities and a wave-like one at high values \cite{wang2021generalized,wang2020green}.
Therefore, it is highly significant to study the wave propagation in thermo-poroelasticity. 
As it is well known, constructing numerical schemes in the frequency domain can significantly enhance efficiency compared with methods in the time domain \cite{arbenz2017comparison,mcmullen2003application}. The smooth temporal variations in thermo-poroelastic boundary conditions can be captured by few Fourier modes, making frequency-domain simulation orders of magnitude more efficient than time-stepping. 
Further, frequency-domain simulations do not need to take into account initial conditions, parallel computation over time, and the stability of time integration.

%==================================================
%！！！！！！！！！！！！！！！！梳理要说的话，T强制性等等；以及梳理本文贡献！！！！！！！！！！！！！！
%!!!!!!!!!!!!!!!!!!!!!模型，难点和贡献可以放在一段！！！！写出fredholm，div-free等关键词，以及main contribution详略得当！！！！！！！！！！！！
%！！！！！！！！！！！！！！！！这一部分的模型介绍，要与CH2中的模型介绍相得益彰。CH2更详细，公式，本构关系！！！！！！！！！！！！！！
%别的参考文献中的高亮部分

This work addresses the mathematical analysis of wave propagation in thermo-poroelasticity in the harmonic regime.
%据我们所知，本文是第一篇用有限元框架，分析热多孔弹性问题在频域中的表现。
To the best of our knowledge, this is the first time to present a functional framework for the wave propagation in thermo-poroelasticity in the frequency domain, including the proof of well-posedness and convergence.
%This framework can be extended to other wave equations. Volker: Do you have a proof for this statement?
%其中一个难点在于，频域问题，类似Helmholtz和Maxwell方程，解的存在唯一性与收敛性是需要一些条件和方法的，Fradholm代替。
One of the main difficulties of this contribution lies in proving the well-posedness of the variational and discrete formulations.
In the weak formulation corresponding to the thermo-poroelasticity in harmonic regime, some sesquilinear forms with frequency number are not coercive and indefinite. The so-called Banach--Ne$\check{c}$as--Babu$\check{s}$ka (BNB) Theorem is not applicable. Furthermore, this model is large in scale, and the inner product only satisfies the conjugate symmetry.
$\mathsf{T}$-coercivity \cite{ciar12} and the G\r{a}rding inequality provide an alternative to the inf-sup condition, which has been applied to solve Helmholtz-type problems.
%另一个难点是，设计的数值格式，能够避免压力振荡，温度振荡，以及体积闭锁。(参考蔡、葛的论文)
%解释这三者，……其中体积闭锁是位移变成div-free形式，
Another challenge concerns the discrete formulation, which must overcome locking phenomenon in the thermo-poroelasticity, including volumetric locking, pressure and temperature oscillations. 
%体积锁定的解释
Volumetric locking means that the solid skeleton is nearly incompressible as the Lam\'{e} constant $\lambda\rightarrow\infty$, i.e., the deformation is in a divergence-free state, and the rate of convergence for the solid displacement deteriorates.
%压力和温度振荡
Pressure oscillations occur when the constrained storage coefficient $c_0$ and the thermal dilatation coefficient $b_0$ are zero, 
while temperature oscillations occur when $b_0$ and the thermal capacity $a_0$ are zero.
%压力和温度振荡的解释
A widely accepted explanation for this pressure instability in poroelasticity is that, if $c_0=0$ and the permeability of the porous media is extremely low, the solid phase behaves as an incompressible medium in the early time \cite{phillips2009overcoming}. In 
\cite{Yi2017Study} it was pointed out that pressure oscillations occur due to the incompatibility of the finite element spaces for the solid displacement and the pore pressure.
From the perspective of modeling, there exists an oscillation of the temperature when the effective thermal conductivity is low, 
for reasons analogous to the underlying pressure oscillations.

%主要贡献
%Cristian:一定要把frequency domain强调，这是难点
%这项工作分析了热多孔弹性动力学在频域中的harmonic模型。我们写出了相关的弱形式，并且证明了，在热多孔弹性的一些合理的参数假设下，解的存在唯一性。
%频域问题，借用T强制性，紧嵌入，fredholm，
%算子分解(参考two-fold的写法)
%BR元+投影(参考pressure-robust加投影的表述)，更便捷的方式来处理H1-conforming下的div-free情形，得到locking-free性质，保证紧嵌入性以及有限元解的存在唯一性和收敛性。
%我们采用Biot-Stokes-Stable的元，能够避免压力现象。
%添加稳定项，以避免温度振荡现象
This paper studies the wave propagation in fully-dynamic thermo-poroelasticity in the frequency domain. 
The corresponding variational formulation in the harmonic regime is proposed. By Fredholm's alternative, we demonstrate the well-posedness under reasonable assumptions on the thermo-poroelastic parameters. Particularly, by exploiting the compact embedding property, we express the differential operator corresponding to the model problem as a bijective operator and a compact perturbation. 
%第二部分，关于参数稳定性
The Bernardi--Raugel element is used to approximate the solid displacement, while incorporating projections in certain sesquilinear forms, in order to obtain parameter robustness with respect to $\lambda$ and preserve the compactness property, thus ensuring the well-posedness of the finite element problem. 
The filtration displacement and pore pressure are approximated by the Raviart--Thomas element, while the temperature is approximated by the piecewise linear polynomial space. 
Pressure oscillations can be avoided since the finite element spaces for the solid displacement and pore pressure satisfy an inf-sup condition.
To avoid temperature oscillations when the discrete system is close to a saddle point problem, we
introduce an additional stabilization term in the temperature equation, which has the form of an 
artificial thermal conductivity.

The remainder of the paper is organized as follows. 
Section~\ref{sec:model} provides details of the model and the weak formulation.
Section~\ref{sec:continuous-analysis} is devoted to proving the well-posedness of the weak formulation.
In Section~\ref{sec:discrete-analysis}, we construct finite element formulation, analyze its well-posedness and the rate of convergence.
We present some numerical studies with analytical solutions and benchmark problems in Section~\ref{sec:numericalExperiments}.
The paper closes with a summary and outlook.

	%%%%%%%%%%%%%%%%%%%%%%%%%%%%%%%%%%%%%%%%%%
\section{Model Problem}\label{sec:model}
\subsection{Thermo-poroelasticity in the harmonic regime}

%说明一下四场问题来自于三场问题，关于grad p的正则性
%引入q=w_t，
%当二阶时间导数去掉，趋近于热多孔弹性模型[]。
%全动力模型包含惯性项和松弛项。以及他们的意义，                        HAL-poromechanic.
Our model, introduced in \cite{JCPBonettiNumerical,Santos2021Existence}, comprises a momentum conservation equation, a mass conservation 
equation, and an energy conservation equation. Therein, the inertial terms and relaxation terms are included to obtain physically consistent results. Assume that $\Omega \subset\mathbb{R}^d$, $d \in \{2,3\}$, is a bounded domain with polyhedral and Lipschitz-continuous boundary $\partial\Omega$ and $T_f>0$ is a
final time. The problem reads, find $(\uu,\ww,p,T)$ defined
on $\Omega\times (0,T_f]$ such that:
\begin{equation} \label{eq:TherPoro}
	\left.\begin{array}{rcl}
			\rho \ddot{\uu}+\rho_f \ddot{\ww}-\ddiv{\boldsymbol{\sigma}}&=&\tilde{\ff}, \\
			\rho_f \ddot{\uu}+\rho_w \ddot{\ww}+\boldsymbol{K}^{-1} \dot{\boldsymbol{w}}+\nabla p&=&\tilde{\gf},\\
			c_0 \dot{p}-b_0 \dot{T}+\alpha\ddiv\dot{\boldsymbol{u}}+\ddiv\dot{\boldsymbol{w}}&=&0,\\
			a_0(\dot{T}+\tau \ddot{T})-b_0(\dot{p}+\tau \ddot{p})+\beta(\ddiv\dot{\uu}+\tau\ddiv\ddot{\uu})
			 -\ddiv(\boldsymbol{\Theta} \nabla T)&=&\tilde{H}
	\end{array}\right\}
\end{equation}
in $\Omega\times (0,T_f]$.
The dots above unknowns represent the first and second temporal derivatives.
This model contains two types of displacement: the solid displacement $\uu\ [\unit{m}]$ and the filtration displacement $\ww\ [\unit{m}]$, in conjunction with the pore pressure 
$p\ [\unit{Pa}]$ and temperature distribution $T\ [\unit{K}]$ as primary unknowns.
The filtration displacement refers to the relative displacement of the solid phase with respect to the fluid phase, with the porosity as a scaling factor.
Thus, $\dot{\ww}$ can be regarded as a new unknown, the volumetric fluid velocity \cite{barre2022analysis}. 
The second equation provides the dynamic formulation of Darcy's law. 
This system can be seen as a combination of Darcy, Brinkman, and Biot equations.
The terms $\tilde{\ff}\ [\unitfrac{N}{m^3}]$, $\tilde{\gf}\ [\unitfrac{N}{m^3}]$, and $\tilde{H}\ [\unitfrac{Pa}{(K\ s)}]$ denote the applied body force, the volumetric fluid, and heat source/sink. 
The density of the mixture $\rho$ is defined as $\rho=\phi\rho_f+(1-\phi)\rho_s$, and $\rho_w=\nicefrac{a \rho_f}{\phi}$, 
with the porosity $\phi$ and tortuosity $a$ satisfying $0<\phi_0\leq\phi\leq\phi_1<1$ and $a>1$.
The total stress tensor $\boldsymbol{\sigma}$ satisfies the constitutive equation 
$\boldsymbol{\sigma}(\uu,p,T)=2\mu\varepsilon(\uu)+\lambda(\ddiv \uu)\boldsymbol{I}-\alpha p\boldsymbol{I}-\beta T\boldsymbol{I}$,
where $\varepsilon(\uu)=\nicefrac{1}{2}(\nabla\uu+\nabla\uu^T)$ is the symmetric strain tensor 
and $\boldsymbol{I}\in\mathbb{R}^{d\times d}$ is the identity matrix. 
Other constitutive laws can be found, e.g., in \cite{barre2022analysis,JCPBonettiNumerical}.
The physical meaning and assumptions of the model's coefficients are provided in Table~\ref{Tab:modelCoefficients} and Assumption \ref{model coefficients}.

\begin{table}[htbp]
	\centering
	\caption{Thermo-poroelastic coefficients appearing in model \eqref{eq:TherPoro}}
	\label{Tab:modelCoefficients}
	\begin{tabular}{cll}
		\toprule  
		Notation                &Quantity                                    &Unit \\ 
		\midrule  
		$a_0$                     &thermal capacity                                  &$\unitfrac{Pa}{K^2}$ \\
		$b_0$                     &thermal dilatation coefficient                    &$\unitfrac1{K}$ \\
		$c_0$                     &specific storage coefficient                      &$\unitfrac1{Pa}$ \\
        $\lambda$, $\mu$          &Lam\'{e} coefficients                             & \unit{Pa} \\
		$\alpha$                  &Biot--Willis constant                              & - \\
		$\beta$                   &thermal stress coefficient                        &$\unitfrac{Pa}{K}$ \\
		$\rho_s$                  &solid matrix density                              &$\unitfrac{kg}{m^3}$ \\ 
		$\rho_f$                  &saturating fluid density                          &$\unitfrac{kg}{m^3}$ \\
		$\boldsymbol{K}$          &permeability divided by fluid viscosity           &$\unitfrac{m^2}{(Pa\, s)}$ \\  
		$\boldsymbol{\Theta}$     &effective thermal conductivity                    &$\unitfrac{m^2\, Pa}{(K^2\, s)}$ \\
		$\phi$                    &porosity                                          & - \\
		$a$                       &tortuosity                                        & - \\
		$\tau$                    &Maxwell--Vernotte--Cattaneo relaxation time         & \unit{s} \\
		\bottomrule 
	\end{tabular}
\end{table}

Mathematical analysis and numerical simulations require a dimensionless problem. For the sake of 
simplifying the notation, we assume that this problem has the form \eqref{eq:TherPoro} with the same notations
for all functions and coefficients. 

\begin{assumption}[Model coefficients] \label{model coefficients}
	\begin{itemize}
        \item[1.] The constant $\alpha$ satisfies $\phi<\alpha\leq 1$, and $\beta$ is strictly positive.
		\item[2.] The constants $a_0$, $b_0$ and $c_0$ satisfy $a_0\geq b_0^2c_0^{-1}$, $b_0\geq 0$, and $c_0> 0$.
        \item[3.] The Lam\'{e} coefficients $\lambda$ and $\mu$ satisfy $0<\lambda_0<\lambda<\infty$ and $0<\mu_1<\mu<\mu_2$.
		\item[4.] The tensors $\boldsymbol{K}(\bx)$ and $\boldsymbol{\Theta}(\bx)$ are symmetric, positive-definite.
		There exist positive constants $k_{\min}$, $k_{\max}$ and $\theta_{\min}$, $\theta_{\max}$ such that for any $\bx\in\Omega$,
		\begin{eqnarray}\label{eq:tensor_K}
			k_{\min}|\boldsymbol{\xi}|^2&\leq&\boldsymbol{\xi}^T\boldsymbol{K}(\bx)\boldsymbol{\xi}\leq k_{\max}|\boldsymbol{\xi}|^2, \quad\forall\ \boldsymbol{\xi}\in\mathbb{R}^d, \\
            \label{eq:tensor_theta}
			\theta_{\min}|\boldsymbol{\xi}|^2&\leq&\boldsymbol{\xi}^T\boldsymbol{\Theta}(\bx)\boldsymbol{\xi}\leq \theta_{\max}|\boldsymbol{\xi}|^2, \quad\forall\ \boldsymbol{\xi}\in\mathbb{R}^d.
		\end{eqnarray}
	\end{itemize}
\end{assumption}

Inspired by \cite{Cristian2024}, taking the Fourier transform in time, we obtain the model \eqref{eq:TherPoro} in the harmonic 
regime for a given positive frequency $\omega$:
\begin{equation*}
	\begin{array}{rcl}
			-\omega^2\rho\uu-\omega^2\rho_f\ww-\ddiv\boldsymbol{\sigma}&=&\tilde{\ff}, \\
			-\omega^2\rho_f\uu-\omega^2\rho_w\ww+i\omega\boldsymbol{K}^{-1}\boldsymbol{w}+\nabla p&=&\tilde{\gf}, \\
			i\omega c_0p-i\omega b_0T+i\omega\alpha\ddiv\uu 
			+i\omega\ddiv\boldsymbol{w}&=&0,\\
			a_0(i\omega-\omega^2\tau)T-b_0(i\omega-\omega^2\tau)p+\beta(i\omega-\omega^2\tau)\ddiv\uu
			-\ddiv(\boldsymbol{\Theta} \nabla T)&=&\tilde{H}.
	\end{array}
\end{equation*}
In case no ambiguity occurs, we use the same notations as in \eqref{eq:TherPoro} for the complex-valued $\omega$-Fourier modes of unknowns 
and right-hand sides, and $i$ denotes the imaginary unit.
Each complex-valued function can be decomposed into its real and imaginary parts, e.g., $\uu=(\re \uu, \im \uu)$. 
For the convenience of further analysis, in the third equation we eliminate $i\omega$, and the fourth equation is multiplied by $\nicefrac{i}{(i\omega-\omega^2\tau)}$.
The final thermo-poroelasticity model in the frequency domain reads:
%%%%%%%%%%温度T与算子T符号重复了。
\begin{equation} \label{eq:TherPoro-fft}
	\left.\begin{array}{rcl}
			-\omega^2\rho\uu-\omega^2\rho_f\ww-\ddiv\boldsymbol{\sigma}&=&\ff, \\
			-\omega^2\rho_f\uu-\omega^2\rho_w\ww+i\omega\boldsymbol{K}^{-1}\boldsymbol{w}+\nabla p&=&\gf, \\
			c_0p - b_0T + \alpha\ddiv\uu + \ddiv\boldsymbol{w}&=&0,\\
			ia_0 T-ib_0 p+i\beta\ddiv\uu-\nicefrac{i}{(i\omega-\omega^2\tau)}\ddiv(\boldsymbol{\Theta} \nabla T)&=&H.
	\end{array}\right\}
\end{equation}
We assume the homogeneous boundary conditions, the nonhomogeneous case follows via classical lifting arguments.
Following the literature for the thermo-poroelasticity model, we consider three partitions
$\{\Gamma_d, \Gamma_t\}$, $\{\Gamma_p, \Gamma_f\}$, 
and $\{\Gamma_r, \Gamma_h\}$ of $\partial\Omega$ with $|\Gamma_d|>0$, $|\Gamma_r|>0$, 
$\partial \Omega=\overline{\Gamma_d} \cup \overline{\Gamma_t}=\overline{\Gamma_p} \cup \overline{\Gamma_f}=\overline{\Gamma_r} \cup \overline{\Gamma_h}$. 
The boundary conditions read:
\begin{equation*}%\label{eq:bc}
\begin{array}{rr}
	\boldsymbol{u}=\boldsymbol{0} \text { on } \Gamma_d, & {\boldsymbol{\sigma}} \boldsymbol{n}=\boldsymbol{0} \text { on } \Gamma_t, \\
	\boldsymbol{K} \boldsymbol{w} \cdot \boldsymbol{n}=0 \text { on } \Gamma_p, & p=0 \text { on } \Gamma_f, \\
	T=0 \text { on } \Gamma_r, & \quad \boldsymbol{\Theta}\nabla T\cdot\boldsymbol{n}=0 \text { on } \Gamma_h.
\end{array}
\end{equation*}
Notice that the second equation in \eqref{eq:TherPoro} only holds under a frequency constraint \cite{Antonietti2022high,Chiavassa2013},
which reads,
\begin{equation} \label{remark:low frequency}
\omega_{\mathrm{crit}}:=\frac{\phi}{2\pi a k_{\mathrm{max}}\rho_f} = \frac1{2\pi a k_{\mathrm{max}}\rho_w}.
\end{equation}
It can be seen in Section~\ref{sec:continuous-analysis} that the analysis also has to take into account
this critical value.

\subsection{Weak formulation}
Throughout this paper, we use standard notations for Lebesgue and Sobolev spaces. 
Let $H^k(\Omega)$, $k\in\mathbb{Z}$, be the Hilbert space, 
and the space $H^k(\ddiv,\Omega)$, $k\in\mathbb{Z}$, is defined by 
\[
H^k(\ddiv,\Omega) := \left\{\vv \in H^k(\Omega)^d\ : \ \ddiv\vv\in H^k(\Omega)\right\},
\]
and $H^0(\ddiv,\Omega)$ coincides with $H(\ddiv,\Omega)$.
The inner product of $L^2(\Omega)$ of complex-valued functions is given by 
%\[
	$(u,v) := \int_{\OO} u\,\overline{v}\mathrm{~dx}$,
%\]
where $\overline{v}$ denotes the complex conjugate of $v$. It satisfies the conjugate symmetry $(u,v)=\overline{(v,u)}$.
And the inner products in $H^1(\Omega)$ and $H(\ddiv;\Omega)$ are given by 
\begin{align*}
	(u,v)_{1} := (u,v)+(\nabla u, \nabla v), \quad (\uu,\vv)_{\ddiv} := (\uu,\vv)+(\ddiv \uu, \ddiv \vv).
\end{align*}
We denote the induced norms and seminorms by $\Vert \,\cdot\,\Vert_0$, $\Vert \,\cdot\,\Vert_1$, $\vert \,\cdot\,\vert_1$ and $\Vert \,\cdot\,\Vert_{\ddiv}$, respectively. 
Define the following complex-valued spaces for the solid displacement $\uu$, the filtration displacement $\ww$, 
the pore pressure $p$, and the temperature $T$, respectively:
\begin{equation*}%\label{eq:spaces}
	\begin{aligned}
			\VV & :=\{\vv\in H^1(\OO)^d:\vv=\zero\text{ on }\GU\},\\
			\ZZ & :=\{\zz\in H(\ddiv,\OO):\zz\cdot\boldsymbol{n}=\zero\text{ on }\Gamma_p\},\\
		      Q & := L^2(\OO), \\
		      S & := \{q\in H^1(\OO): q=0   \text{ on }\GT\}.\\
	\end{aligned}
\end{equation*}
Since it is assumed that $|\GU| >0$, a Poincar\'e--Steklov inequality and a Korn inequality 
\begin{equation}\label{eq:korn}
\Vert \vv\Vert_0 \le C_p \Vert\nabla\vv\Vert_0, \quad \Vert \nabla\vv \Vert_0 \le C_{k} \Vert \varepsilon(\vv) \Vert_0, \quad \forall \ \vv \in \VV
\end{equation}
with $C_p, C_k>0$ hold true, e.g., see Eq.~(42.9), Theorems~42.10, 42.11 in \cite{ErnFinite}.
The product space $\UU:=\VV\times \ZZ\times Q\times S$ is endowed with the norm
\begin{equation*}%\label{norm:cont}
	\Vert (\vv, \zz, q, s) \Vert^2_{\UU}:= 		
	2\mu\nnormZero{\varepsilon(\vv)}^2 + \lambda\nnormZero{\ddiv\vv}^2 + \Vert\zz\Vert^2_{\ddiv} +\nnormZero{q}^2 + \nnormOne{s}^2.
\end{equation*}
Define the following sesquilinear forms:
\begin{alignat}{4}
&a_1: \VV\times\VV\to\mathbb{C}, \ && a_1(\uu,\vv) &&:= &&\ -\omega^2\rho\uprod{\uu}{\vv} 
	+ 2\mu\uprod{\varepsilon(\uu)}{\varepsilon(\vv)}\label{eq:a_1_form} \\
& && && &&  \   + \lambda\uprod{\ddiv\uu}{\ddiv\vv},\nonumber \\
&a_2: \ZZ\times\ZZ\to\mathbb{C}, \ &&a_2(\ww,\zz)	&&:= &&\  -\omega^2\rho_w\uprod{\ww}{\zz}  + i\omega \uprod{\boldsymbol{K}^{-1}\ww}{\zz},  \label{eq:a_2_form}\\
&a_3: S\times S\to\mathbb{C}, \ &&a_3(T,s) &&:= &&\  a_0i\uprod{T}{s} 
	+ i/(i\omega-\omega^2\tau)\uprod{\boldsymbol{\Theta}\nabla T}{\nabla s}, \label{eq:a_3_form}\\
&b_1: \ZZ\times\VV\to\mathbb{C}, \ &&b_1(\ww,\vv) &&:= &&\  -\omega^2\rho_f\uprod{\ww}{\vv}, \label{eq:b1_form}\\
& b_2: Q\times\VV\to\mathbb{C}, \ &&b_2(p,\vv) &&:= && \  -\alpha\uprod{p}{\ddiv\vv}, \label{eq:b2_form}\\
&b_3: Q\times\ZZ\to\mathbb{C}, \ &&b_3(p,\zz) &&:= && \ -\uprod{p}{\ddiv\zz}, \label{eq:b3_form}\\ 
&c_1: S\times\VV\to\mathbb{C}, \ &&c_1(T,\vv) &&:= &&\  -\beta\uprod{T}{\ddiv\vv}, \label{eq:c1_form} \\
&c_2: S\times Q\to\mathbb{C}, \ &&c_2(T,q) &&:= && \ -b_0\uprod{T}{q}, \label{eq:c2_form} \\
&d: Q\times Q\to\mathbb{C}, \ &&d(p,q) &&:= &&\  c_0\uprod{p}{q}. \label{eq:d_form}
\end{alignat}
The adjoint sesquilinear forms are denoted with a superscript star, e.g., $b_1^*$.

The variational formulation of \eqref{eq:TherPoro-fft}, which is derived in the standard way, reads as follows:
Find $\uve = (\uu,\ww,p,T)\in\UU$ such that
\begin{equation}\label{weakF}
	\langle \mathcal{A}(\uve),\vve \rangle = \langle \mathcal{F},\vve \rangle \quad\forall\ \vve = (\vv,\zz,q,s)\in\UU,
\end{equation}
where the operators $\mathcal{A}:\UU\to\UU^*$ and $\mathcal F \in \UU^*$ read
\begin{eqnarray*} %\label{eq:Auv-weak}
	\nonumber	\langle \mathcal{A}(\uve),\vve \rangle & :=& 
		   a_1(\uu,\vv) + b_1(\ww,\vv) + b_2(p,\vv) + c_1(T,\vv) + b_1^*(\uu,\zz) + a_2(\ww,\zz) \nonumber  \\
	\nonumber	 && + b_3(p,\zz)  - b_2^*(\uu,q) - b_3^*(\ww,q) + d(p,q) + c_2(T,q) - i c_1^*(\uu,s)\nonumber\\
	&&+ i c_2^*(p,s) + a_3(T,s), \\
	\nonumber	\langle \mathcal{F},\vve \rangle &:=& \uprod{\ff}{\vv} + \uprod{\gf}{\zz} + \uprod{H}{s}.
\end{eqnarray*}

	\section{Analysis of the Continuous Problem} \label{sec:continuous-analysis}
%像别的论文一样，加一点陈述，比如Asub不能用传统的鞍点问题来证明
In order to prove the well-posedness of the variational formulation \eqref{weakF}, 
some linear bounded sub-operators will be introduced to decompose $\mathcal{A}$:
Define $\mathcal{A}_{1,2}:\VV\times\ZZ \to (\VV\times\ZZ)^*$, $\mathcal{B}:Q\to(\VV\times\ZZ)^*$, 
$\mathcal{C}_1:S\to(\VV\times\ZZ\times Q)^*$, $\mathcal{C}_2:\VV\times\ZZ\times Q\to S^*$, 
$\mathcal{D}: Q \to Q^*$, and $\mathcal{A}_3: S\to S^*$. 
For simplicity of notation, we use in this section the symbols $\uve, \vve$, which were 
introduced for the space $\UU$, also for subspaces, where in the following, it has to be understood that the respective components are taken. 
Then, the sub-operators read as follows:
\begin{eqnarray}
	\langle\mathcal{A}_{1,2}(\uve),\vve\rangle  &:=& a_1(\uu,\vv) + a_2(\ww,\zz) + b_1(\ww,\vv) 
	+ b_1^*(\uu,\zz), \nonumber \\ %\label{eq:op-A12} \\
	\langle\mathcal{B}(\uve),\vve\rangle  &:=& b_2(p,\vv) + b_3(p,\zz),\nonumber\\% \label{eq:op-B} \\
	\langle\mathcal{C}_{1}(\uve),\vve\rangle  &:=& c_1(T,\vv) + c_2(T,q), \label{eq:op-C1}  \\
	\langle\mathcal{C}_2(\uve),\vve\rangle &:=& -i c_1^*(\uu,s) + i c_2^*(p,s), \label{eq:op-C2} \\
	\langle\mathcal{D}(\uve),\vve\rangle  &:=& d(p,q),\nonumber\\% \label{eq:op-D} \\
	\langle\mathcal{A}_3(\uve),\vve\rangle &:=& a_3(T,s). \nonumber % \label{eq:op-A3}
\end{eqnarray}
These operators allow us to define a sub-operator $\mathcal{A}_{\mathrm{sub}}$ and rewrite $\mathcal{A}$ in the following form:
\begin{equation}\label{eq:oper-decomp}
	\mathcal{A}_{\mathrm{sub}} = 
	\begin{pmatrix}
		\mathcal{A}_{1,2} & \mathcal{B}  \\ -\mathcal{B}^* & D
	\end{pmatrix},\quad
	\mathcal{A} = 
	\begin{pmatrix}
		\mathcal{A}_{\mathrm{sub}} & \mathcal{C}_1  \\ \mathcal{C}_2 & \mathcal{A}_3
	\end{pmatrix}.\,
\end{equation}
This decomposition will be utilized to establish the bijectivity
of $\mathcal{A}_{\mathrm{sub}}$ as well as $\mathcal{A}$
by employing the G\r{a}rding inequality and Fredholm's alternative.

\subsection{Preliminaries}\label{ssec:preliminaries}
%可以加一些解释，这里为什么需要弱强制性，联系helmholtz方程,a1是不定的，强制性由korn和poincare需要很强的条件。所以需要弱强制性。

The main difficulty of well-posedness comes from the fact that the underlying sesquilinear forms, 
particularly $a_1(\cdot,\cdot)$, are not sign-defined. 
To address this, $\mathsf{T}$-coercivity is employed as an alternative to the BNB theorem.
The upcoming definition and lemmas establish the key properties of $\mathsf{T}$-coercivity.
\begin{definition}[$\mathsf{T}$-coercivity]\label{def:T-coer}
    Let $V$ and $W$ be two Hilbert spaces, and $a(\cdot,\cdot)$ be a continuous sesquilinear form over $V\times W$. 
    It is $\mathsf{T}$-coercive if there exists $\mathsf{T}\in\mathcal{L}(V, W)$, bijective, and a constant $\tilde{\alpha} >0$, such that
    \[
    \vert a(v,\mathsf{T}v) \vert \geq \tilde{\alpha} \Vert v \Vert^2_V \quad \forall\ v \in V.
    \]
\end{definition}

\begin{lemma}[Well-posedness of $\mathsf{T}$-coercive sesquilinear form]\label{lem:T-coer}
    Under the condition of Definition \ref{def:T-coer}, let $\ell\in W^{\prime}$, the variational problem $a(v,w)=\ell(w)$
     is well-posed, if and only if $a(\cdot,\cdot)$ is $\mathsf{T}$-coercivity in the sense of Definition \ref{def:T-coer}.
\end{lemma}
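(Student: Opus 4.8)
The plan is to recast well-posedness in operator form and then prove the two implications separately: the ``if'' direction by reducing to the complex Lax--Milgram lemma composed with $\mathsf{T}$, and the ``only if'' direction through the Riesz representation theorem.

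First I would associate with $a(\cdot,\cdot)$ the bounded linear operator $A\in\mathcal{L}(V,W^{\prime})$ defined by $\langle Av,w\rangle=a(v,w)$ for all $v\in V$, $w\in W$. Since $V$ and $W$ are Hilbert (hence Banach) spaces, the problem $a(v,w)=\ell(w)$ is well-posed for every $\ell\in W^{\prime}$ if and only if $A$ is bijective, the boundedness of $A^{-1}$ being then automatic by the bounded inverse theorem. I would also recall the version of the Lax--Milgram lemma valid for a continuous sesquilinear form $\tilde a$ on a complex Hilbert space $H$ that satisfies only the modulus estimate $|\tilde a(v,v)|\ge\tilde\alpha\Vert v\Vert_H^2$: the associated operator is then bounded below, hence injective with closed range, and its range is dense because any element annihilating it, tested against itself, would contradict the bound; therefore $\tilde a(v,\cdot)=\ell$ has a unique solution with $\Vert v\Vert_H\le\tilde\alpha^{-1}\Vert\ell\Vert_{H^{\prime}}$. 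This modulus version, rather than the more familiar real-part version, is what the non-sign-definite forms of this paper require.

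For the implication ``$\mathsf{T}$-coercive $\Rightarrow$ well-posed'', I would introduce $\tilde a(v,v^{\prime}):=a(v,\mathsf{T}v^{\prime})$ on $V\times V$, which is continuous because $a$ is continuous and $\mathsf{T}$ is bounded, and which satisfies $|\tilde a(v,v)|=|a(v,\mathsf{T}v)|\ge\tilde\alpha\Vert v\Vert_V^2$. Given $\ell\in W^{\prime}$, the map $v^{\prime}\mapsto\ell(\mathsf{T}v^{\prime})$ belongs to $V^{\prime}$, so the Lax--Milgram lemma gives a unique $v\in V$ with $a(v,\mathsf{T}v^{\prime})=\ell(\mathsf{T}v^{\prime})$ for all $v^{\prime}\in V$; since $\mathsf{T}$ maps $V$ onto $W$, this is exactly $a(v,w)=\ell(w)$ for all $w\in W$. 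Uniqueness for the original problem follows by choosing $w=\mathsf{T}v$ in $a(v,w)=0$, and the stability bound $\Vert v\Vert_V\le\tilde\alpha^{-1}\Vert\mathsf{T}\Vert_{\mathcal{L}(V,W)}\Vert\ell\Vert_{W^{\prime}}$ is inherited from the Lax--Milgram estimate together with $\Vert\mathsf{T}v^{\prime}\Vert_W\le\Vert\mathsf{T}\Vert\,\Vert v^{\prime}\Vert_V$.

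For the converse, I would use that well-posedness makes $A\in\mathcal{L}(V,W^{\prime})$ an isomorphism. Identifying $W^{\prime}$ with $W$ via the isometric Riesz map $R_W$ and setting $\mathsf{T}:=R_W^{-1}A\in\mathcal{L}(V,W)$, which is bijective as a composition of bijections, the defining relation of $R_W$ yields $a(v,\mathsf{T}v)=\langle Av,\mathsf{T}v\rangle=(\mathsf{T}v,\mathsf{T}v)_W=\Vert\mathsf{T}v\Vert_W^2$ for every $v\in V$. Since $R_W$ is an isometry and $A$ an isomorphism, $\Vert\mathsf{T}v\Vert_W=\Vert Av\Vert_{W^{\prime}}\ge\Vert A^{-1}\Vert^{-1}\Vert v\Vert_V$, so $|a(v,\mathsf{T}v)|\ge\tilde\alpha\Vert v\Vert_V^2$ with $\tilde\alpha=\Vert A^{-1}\Vert^{-2}$, which is precisely $\mathsf{T}$-coercivity. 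The argument is otherwise routine; the only genuine care needed is the bookkeeping of the sesquilinear versus antilinear conventions when identifying $W^{\prime}$ with $W$ through Riesz, and the systematic use of the modulus form of Lax--Milgram noted above.
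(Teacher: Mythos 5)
Your proof is correct. The paper itself does not prove this lemma but defers to the cited references on $\mathsf{T}$-coercivity, and your argument is precisely the standard one found there: the ``if'' direction reduces to the modulus form of the Lax--Milgram lemma (which is valid, since $|\tilde a(v,v)|\ge\tilde\alpha\Vert v\Vert^2$ gives an operator bounded below, and density of the range follows by testing an annihilating element against itself) applied to $\tilde a(v,v'):=a(v,\mathsf{T}v')$ together with surjectivity of $\mathsf{T}$; the ``only if'' direction takes $\mathsf{T}=R_W^{-1}A$ so that $a(v,\mathsf{T}v)=\Vert\mathsf{T}v\Vert_W^2\ge\Vert A^{-1}\Vert^{-2}\Vert v\Vert_V^2$. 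Your caveat about the sesquilinear/antidual bookkeeping in the Riesz identification is the only delicate point, and you handle it consistently with the paper's convention that the forms are antilinear in the second argument.
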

We refer to \cite{ciar12,ciarlet2025explicit} for the proof.

\begin{lemma}[The Fredholm alternative of index zero] \label{the-Fredholm-alternative}
	Suppose $H$ is a Hilbert space, $A:H\rightarrow H$ is invertible, and $K:H\rightarrow H$ is compact. 
	If $\ker(A+K)=0$, then $(A+K)u=f$ has a unique solution for all $f$.
\end{lemma}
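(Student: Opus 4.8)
The plan is to reduce the statement to the classical Riesz--Schauder theory for perturbations of the identity by a compact operator. First I would factor
$A+K = A\,(\mathrm{Id} + A^{-1}K)$,
which is legitimate because $A$ is invertible with bounded inverse $A^{-1}\in\mathcal{L}(H)$; consequently $A^{-1}K$ is compact, being the composition of a bounded operator with a compact one. Since $A$ is a bijection, $(A+K)u=0$ holds if and only if $(\mathrm{Id}+A^{-1}K)u=0$, so the hypothesis $\ker(A+K)=\{0\}$ is equivalent to $\ker(\mathrm{Id}+A^{-1}K)=\{0\}$.

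Next I would invoke the Fredholm alternative for operators of the form $\mathrm{Id}+C$ with $C$ compact on a Hilbert space: such an operator is injective if and only if it is surjective, and in that case its inverse is bounded. This is the part that carries the real content. If one wishes to keep the argument self-contained rather than cite a standard reference (e.g., \cite{ErnFinite}), the key steps are: (i) $\ker(\mathrm{Id}+C)$ is finite-dimensional, since the identity restricted to it coincides with $-C$ and is therefore compact, forcing the closed unit ball of that subspace to be compact; (ii) $\mathrm{range}(\mathrm{Id}+C)$ is closed, which follows from the fact that $\mathrm{Id}+C$ is bounded below on a topological complement of its kernel; and (iii) an index computation, combining the finite-dimensionality of $\ker(\mathrm{Id}+C^{*})$ with the identity $\overline{\mathrm{range}(\mathrm{Id}+C)}=\ker(\mathrm{Id}+C^{*})^{\perp}$, yielding $\dim\ker(\mathrm{Id}+C)=\dim\ker(\mathrm{Id}+C^{*})$, so that injectivity forces surjectivity.

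Applying this with $C=A^{-1}K$, the equivalence of kernels established above shows that $\mathrm{Id}+A^{-1}K$ is bijective with bounded inverse; hence $A+K = A\,(\mathrm{Id}+A^{-1}K)$ is bijective with bounded inverse, and for every $f\in H$ the equation $(A+K)u=f$ has the unique solution $u=(\mathrm{Id}+A^{-1}K)^{-1}A^{-1}f$. The only genuine obstacle is step (iii) of the Riesz--Schauder argument, the index-zero property, which is precisely the reason that in practice I would simply cite the classical result and not reprove it here.
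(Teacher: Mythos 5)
Your argument is correct: the factorization $A+K=A(\mathrm{Id}+A^{-1}K)$ with $A^{-1}K$ compact, followed by the Riesz--Schauder alternative for compact perturbations of the identity, is exactly the standard proof underlying the result, and the paper itself offers no proof but simply cites Theorem~8.2 of \cite{SayasVariational}, which proceeds along the same lines. Nothing further is needed.
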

For the proof, see Theorem~8.2 in \cite{SayasVariational}.
The Fredholm alternative states that in case the operator associated with the differential problem can be decomposed into a bijective
operator and a compact perturbation, then this operator is invertible if and only if it is injective.

\subsection{Well-posedness}\label{ssec:cont-stab}
Define inner products in $\VV$ as follows:
\begin{eqnarray*}
    (\uu,\vv)_{0,\rho} &:=& \rho(\uu,\vv), \quad (\uu,\vv)_{1,\mu} := 2\mu(\varepsilon(\uu),\varepsilon(\vv)), \\
    (\uu,\vv)_{1,\mu,\lambda} &:=& (\uu,\vv)_{1,\mu} + \lambda(\ddiv\uu,\ddiv\vv), \\
	(\uu,\vv)_{1,\mu,\lambda,\rho} &:=&  (\uu,\vv)_{1,\mu,\lambda} + (\uu,\vv)_{0,\rho}, \quad \forall\ \uu, \vv\in\VV,
\end{eqnarray*}
and denote the associated norms by $\Vert\vv\Vert_{0,\rho}$, $\Vert\vv\Vert_{1,\mu}$, $\Vert \vv\Vert_{1,\mu,\lambda}$, and $\Vert\vv\Vert_{1,\mu,\lambda,\rho}$.
Since the embedding $\VV \hookrightarrow L^2(\OO)^d$ is compact, which is a case of the Rellich--Kondrachov theorem, e.g., see Theorem~6.2 in \cite{Ada75}, there exists an orthogonal Hilbert basis of $\VV$, with respect to $(\cdot,\cdot)_{1,\mu,\lambda}$. This basis will be fixed by composing it of the eigenvectors of the elasticity operator.
Thus, let $ \{\ppsi_n,\kappa_n\}_n \in \VV\times\RR^+$ be a family of the basis and the corresponding eigenvalues, 
i.e., they satisfy 
\begin{equation}\label{eq:eigenvalues}
		(\ppsi_n,\vv)_{1,\mu,\lambda} = \kappa_n\, (\ppsi_n,\vv)_{0,\rho} \quad \forall \ \vv\,\in\,\VV, \quad
		\lim_{n \to \infty} \kappa_n = +\infty.
\end{equation}
Notice that for $\vv = \ppsi_m$, $m\neq n$, one obtains 
\begin{equation}\label{eq:L2_ortho}
0 = (\ppsi_n,\ppsi_m)_{1,\mu,\lambda} = \kappa_n\, (\ppsi_n,\ppsi_m)_{0,\rho}.
\end{equation}
Hence, the basis functions are also orthogonal with respect to the $L^2$ inner product, and consequently, 
with respect to the inner product $(\cdot,\cdot)_{1,\mu,\lambda,\rho}$. 
Using the latter property and the normalization $ \Vert \ppsi_n \Vert_{1,\mu,\lambda,\rho} = 1$, 
then every $\vv\in\VV$ admits a unique expansion
\begin{equation} \label{eq:expansion_efct}
\vv = \sum_{n\geq 0} \alpha_n \ppsi_n \quad \mbox{with} \quad
\alpha_n  := (\vv,\ppsi_n)_{1,\mu,\lambda,\rho} \quad \mbox{and} \quad \Vert \vv\Vert_{1,\mu,\lambda,\rho}^2 =  \displaystyle \sum_{n\geq 0} \alpha^2_n.
\end{equation}

\begin{assumption}[Well-posedness of the underlying elasticity problem] \label{assum:tcoer}
	Let $\kappa_n$ be the eigenvalues introduced in \eqref{eq:eigenvalues}. We assume that $\omega^2 \notin (\kappa_n)_{n\geq 0}$.
\end{assumption}
In the remainder of the paper, it is always assumed that Assumption~\ref{assum:tcoer} is satisfied, 
without stating it explicitly at each occasion. 
Assumption~\ref{assum:tcoer} guarantees that the considered frequency $\omega$ is not an eigenfrequency. 
Let $\{\ppsi_n\}$ be the set of eigenvectors introduced in \eqref{eq:eigenvalues}. 
We fix a value $\overline{m}\in\mathbb{N}$ by  $\overline{m} = \max\{ n \in \mathbb N \mid \omega^2 > \kappa_n\}$
and define the related finite-dimensional subspace
\begin{equation*}
\VV^-:= \Span_{0\leq n\leq \overline{m}} \{\ppsi_n\}\,.
\end{equation*}
Let $\Pim$ denote the orthogonal projection on $\VV^-$ and let $\mathbb{T} := \II_{\VV} - 2\Pim$, 
where $\II_{\VV}$ is the identity on $\VV$.

\begin{lemma}[$\mathbb{T}$-Coercivity of $a_1(\cdot,\cdot)$]\label{lemma:T-coer}
Let $a_1(\cdot,\cdot)$ be the sesquilinear form introduced in \eqref{eq:a_1_form}.
Under the hypotheses of Assumption \ref{assum:tcoer}, $a_1(\cdot,\cdot)$ is $\mathbb{T}$-coercive.
\end{lemma}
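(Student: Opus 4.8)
The plan is to verify Definition~\ref{def:T-coer} directly, using the operator $\mathbb{T} = \II_{\VV} - 2\Pim$ together with the spectral expansion \eqref{eq:expansion_efct}. First I would check that $\mathbb{T}$ is an admissible candidate: since $\Pim$ is an orthogonal projection (with respect to $(\cdot,\cdot)_{1,\mu,\lambda,\rho}$), one has $\mathbb{T}^2 = (\II_{\VV}-2\Pim)^2 = \II_{\VV} - 4\Pim + 4\Pim^2 = \II_{\VV}$, so $\mathbb{T}\in\mathcal{L}(\VV)$ is bounded and is its own inverse, hence bijective. It then remains to produce the coercivity estimate.

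For the estimate, I would rewrite $a_1$ through the inner products introduced before the lemma, namely $a_1(\uu,\vv) = (\uu,\vv)_{1,\mu,\lambda} - \omega^2(\uu,\vv)_{0,\rho}$, and expand an arbitrary $\vv\in\VV$ as $\vv = \sum_{n\ge0}\alpha_n\ppsi_n$ with $\alpha_n = (\vv,\ppsi_n)_{1,\mu,\lambda,\rho}$. Then $\mathbb{T}\vv = \sum_{n\ge0}\epsilon_n\alpha_n\ppsi_n$, where $\epsilon_n = -1$ for $0\le n\le\overline{m}$ and $\epsilon_n = +1$ for $n>\overline{m}$. Using that the $\ppsi_n$ are orthogonal with respect to both $(\cdot,\cdot)_{1,\mu,\lambda}$ and $(\cdot,\cdot)_{0,\rho}$ by \eqref{eq:L2_ortho}, that the $\epsilon_n$ are real, and the eigenrelation \eqref{eq:eigenvalues} taken at $\vv = \ppsi_n$, all cross terms drop out and one arrives at
\[
a_1(\vv,\mathbb{T}\vv) = \sum_{n\ge0}\epsilon_n\,|\alpha_n|^2\big(\Vert\ppsi_n\Vert_{1,\mu,\lambda}^2 - \omega^2\Vert\ppsi_n\Vert_{0,\rho}^2\big) = \sum_{n\ge0}\epsilon_n(\kappa_n-\omega^2)\,|\alpha_n|^2\,\Vert\ppsi_n\Vert_{0,\rho}^2 ,
\]
which is in particular real. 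From \eqref{eq:eigenvalues} and the normalization $\Vert\ppsi_n\Vert_{1,\mu,\lambda,\rho}=1$ one computes $\Vert\ppsi_n\Vert_{0,\rho}^2 = (1+\kappa_n)^{-1}$, while by the definition of $\overline{m}$ one has $\kappa_n<\omega^2$ for $n\le\overline{m}$ and $\kappa_n>\omega^2$ for $n>\overline{m}$, the inequalities being strict thanks to Assumption~\ref{assum:tcoer}; hence $\epsilon_n(\kappa_n-\omega^2) = |\kappa_n-\omega^2| > 0$ for every $n$, and
\[
a_1(\vv,\mathbb{T}\vv) = \sum_{n\ge0}\frac{|\kappa_n-\omega^2|}{1+\kappa_n}\,|\alpha_n|^2 \ge 0 .
\]

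The remaining — and really the only non-formal — point is the uniform lower bound $\tilde\alpha := \inf_{n\ge0}\frac{|\kappa_n-\omega^2|}{1+\kappa_n} > 0$. Here I would argue that, since $\kappa_n\to+\infty$ by \eqref{eq:eigenvalues}, the quotient $\frac{|\kappa_n-\omega^2|}{1+\kappa_n}$ tends to $1$, so it can be smaller than, say, $\tfrac12$ only for finitely many indices $n$; for each of those finitely many indices the quotient is strictly positive because $\omega^2\ne\kappa_n$ (Assumption~\ref{assum:tcoer}); therefore the infimum is the minimum of finitely many strictly positive numbers together with the tail bound, hence strictly positive. Combining, $|a_1(\vv,\mathbb{T}\vv)| = a_1(\vv,\mathbb{T}\vv) \ge \tilde\alpha\sum_{n\ge0}|\alpha_n|^2 = \tilde\alpha\Vert\vv\Vert_{1,\mu,\lambda,\rho}^2$, and since $\Vert\cdot\Vert_{1,\mu,\lambda,\rho}$ is equivalent to the norm induced on $\VV$ by $\Vert\cdot\Vert_{\UU}$ (using the Korn and Poincar\'e--Steklov inequalities \eqref{eq:korn}), the $\mathbb{T}$-coercivity of $a_1(\cdot,\cdot)$ follows. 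The only care needed elsewhere is the bookkeeping of complex conjugates in the expansion, which is harmless since $\mathbb{T}$ acts through the real eigenvalues $\pm1$.
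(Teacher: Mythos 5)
Your proof is correct and follows essentially the same route as the paper: establishing $\mathbb{T}^2=\II_{\VV}$, expanding $\vv$ in the eigenbasis, and reducing $a_1(\vv,\mathbb{T}\vv)$ to the sum $\sum_{n\geq 0}\frac{|\kappa_n-\omega^2|}{1+\kappa_n}|\alpha_n|^2$ bounded below by $\gamma_{\min}\Vert\vv\Vert^2_{1,\mu,\lambda,\rho}$. The only difference is that you derive the spectral identity from scratch and explicitly justify $\inf_n\frac{|\kappa_n-\omega^2|}{1+\kappa_n}>0$ via the limit $\kappa_n\to\infty$, whereas the paper delegates the identity to a cited lemma and asserts the positivity of $\gamma_{\min}$ without comment.
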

\begin{proof}
The proof follows the approach presented in \cite{ciar12}.  First, a straightforward calculation shows that 
\begin{equation}\label{eq:prop_T}
		\mathbb{T} \ppsi_n =
		\begin{cases}
			-\ppsi_n, & 0\leq n\leq \overline{m},\\
			\ppsi_n, & n>\overline{m}.
		\end{cases}
\end{equation}
Thus, $\mathbb{T}^2 = \II$, implying that $\mathbb{T}$ is bijective. 
To prove the $\mathbb T$-coercivity, 
we follow Lemma~5 from \cite{Cristian2024} to obtain that for all $\vv \in \VV$,
%\begin{equation}\label{eq:elasticity_eigen}
\begin{eqnarray}\label{eq:t_coerc}
		a_1\(\vv,\mathbb{T}\vv\)  & = &  \sum_{0\leq n\leq \overline{m}} \alpha_n a_1\(\vv,\mathbb{T}\ppsi_n\) + \sum_{ n > \overline{m}}\alpha_n a_1(\vv,\mathbb{T}\ppsi_n) \\ 
		& = &\sum_{0\leq n\leq \overline{m}}\left(\frac{\omega^2-\kappa_n}{1+\kappa_n} \right)\,\alpha^2_n  + \sum_{ n > \overline{m}}  \left(\frac{\kappa_n-\omega^2}{1+\kappa_n} \right)\,\alpha^2_n \geq  \gamma_{\min} \,\Vert\vv\Vert^2_{1,\mu,\lambda,\rho},\nonumber
\end{eqnarray}
with 
\begin{equation}\label{eq:alpha_min}
\gamma_{\min} = \min_{n\geq 0}  \left\vert\frac{\omega^2-\kappa_n}{1+\kappa_n} \right\vert > 0.
\end{equation}
This concludes the proof. 
\end{proof}

\begin{lemma}[Continuity] \label{lem:Acont}
	There exist two constants $C_1, C_2>0$, depending on the physical coefficients of the model and geometry of $\Omega$,  such that 
	\begin{equation*}%\label{eq:Acont}
			\Vert \mathcal{A}(\uve)\Vert_{\UU} \leq C_1\, \Vert \uve \Vert_{\UU} \quad \forall\ \uve\in\UU,
		\end{equation*}
	and
	\begin{equation*}%\label{eq:Fcont}
			\Vert \mathcal{F}  \Vert_{\UU^*}\leq C_2 \,\(\Vert\ff\Vert_0 + \Vert\gf\Vert_0 + \Vert H\Vert_0\).
	\end{equation*}
\end{lemma}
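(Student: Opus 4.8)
The plan is to bound $|\langle\mathcal{A}(\uve),\vve\rangle|$ for arbitrary $\uve,\vve\in\UU$ by $C_1\|\uve\|_\UU\|\vve\|_\UU$ and then to take the supremum over $\vve\ne\bzero$ to recover $\|\mathcal{A}(\uve)\|_{\UU^*}\le C_1\|\uve\|_\UU$, and similarly for $\mathcal{F}$. Since $\langle\mathcal{A}(\uve),\vve\rangle$ is, by construction, a finite sum of the sesquilinear forms \eqref{eq:a_1_form}--\eqref{eq:d_form} and their adjoints, it suffices to estimate each summand separately, using the Cauchy--Schwarz inequality in $L^2(\OO)$ and $L^2(\OO)^d$, the uniform bounds on the coefficients from Assumption~\ref{model coefficients} (in particular $\mu\ge\mu_1$, $\lambda\ge\lambda_0$, the spectral bound $\|\boldsymbol{K}^{-1}\boldsymbol\xi\|\le k_{\min}^{-1}\|\boldsymbol\xi\|$ implied by \eqref{eq:tensor_K}, and $\|\boldsymbol\Theta\boldsymbol\xi\|\le\theta_{\max}\|\boldsymbol\xi\|$ from \eqref{eq:tensor_theta}), and the elementary estimate $|i/(i\omega-\omega^2\tau)| = \bigl(\omega\sqrt{1+\omega^2\tau^2}\,\bigr)^{-1}\le\omega^{-1}$. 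Taking adjoints, and the extra factor $i$ appearing in $\mathcal{C}_2$, does not change moduli, so the adjoint forms obey the same bounds as the originals.

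The only point needing a little care is that the $\VV$-block of $\|\cdot\|_\UU$ controls $\vv$ only through $2\mu\|\varepsilon(\vv)\|_0^2+\lambda\|\ddiv\vv\|_0^2$, not directly through $\|\vv\|_1$. So first I would record, from the Korn and Poincar\'e--Steklov inequalities \eqref{eq:korn} together with $\mu\ge\mu_1$, the auxiliary bound
\[
\|\vv\|_1^2\ \le\ (1+C_p^2)\,C_k^2\,\|\varepsilon(\vv)\|_0^2\ \le\ \frac{(1+C_p^2)\,C_k^2}{2\mu_1}\,\|\vve\|_\UU^2 ,
\]
so that $\|\vv\|_0$ and $\|\vv\|_1$ are dominated by $\|\vve\|_\UU$ with a constant depending only on $C_p,C_k,\mu_1$; likewise $\|\ddiv\vv\|_0\le\lambda_0^{-1/2}\|\vve\|_\UU$, while $\|\zz\|_{\ddiv}\le\|\vve\|_\UU$, $\|q\|_0\le\|\vve\|_\UU$, $\|s\|_1\le\|\vve\|_\UU$ are immediate from the definition of the norm. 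With these reductions, $|a_1(\uu,\vv)|$ is bounded by $\omega^2\rho\|\uu\|_0\|\vv\|_0+2\mu\|\varepsilon(\uu)\|_0\|\varepsilon(\vv)\|_0+\lambda\|\ddiv\uu\|_0\|\ddiv\vv\|_0$ (the last two terms being directly $\le\|\uve\|_\UU\|\vve\|_\UU$, the first by Korn/Poincar\'e), $|a_2|,|b_1|,|b_1^*|$ reduce to constant multiples of products of $L^2$-norms, $|a_3|$ to a multiple of $\|T\|_0\|s\|_0+\|\nabla T\|_0\|\nabla s\|_0$, and $|b_2|,|b_3|,|c_1|,|c_2|,|d|$ and their adjoints to multiples of the corresponding products of component norms (the $\ddiv\vv$ factors in $b_2$ and $c_1$ being handled by the $\lambda_0^{-1/2}$ estimate). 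Summing over the finitely many terms gives $|\langle\mathcal{A}(\uve),\vve\rangle|\le C_1\|\uve\|_\UU\|\vve\|_\UU$ with $C_1$ the sum of the individual constants -- all depending only on $\omega$, $\tau$, the model coefficients, and the geometric constants $C_p,C_k$ -- and the operator bound follows.

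For $\mathcal{F}$ the argument is shorter: $\langle\mathcal{F},\vve\rangle=(\ff,\vv)+(\gf,\zz)+(H,s)$, so by Cauchy--Schwarz and the reductions above $|(\ff,\vv)|\le\|\ff\|_0\|\vv\|_0\le C\|\ff\|_0\|\vve\|_\UU$, $|(\gf,\zz)|\le\|\gf\|_0\|\zz\|_{\ddiv}\le\|\gf\|_0\|\vve\|_\UU$, and $|(H,s)|\le\|H\|_0\|s\|_1\le\|H\|_0\|\vve\|_\UU$; adding these and taking the supremum over $\vve$ yields $\|\mathcal{F}\|_{\UU^*}\le C_2(\|\ff\|_0+\|\gf\|_0+\|H\|_0)$. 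I do not anticipate a genuine obstacle here -- everything is routine Cauchy--Schwarz bookkeeping. The two things to stay vigilant about are (i) never invoking a full $H^1$-control of $\uu$ that the energy norm does not provide, only the Korn/Poincar\'e route above, and (ii) tracking the coefficient dependence carefully, noting in particular that $\lambda$ enters the individual estimates only through negative powers, which remain bounded because $\lambda\ge\lambda_0$.
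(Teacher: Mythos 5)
Your argument is correct and follows essentially the same route as the paper's proof: term-by-term Cauchy--Schwarz estimates on the sesquilinear forms, the Korn and Poincar\'e--Steklov inequalities \eqref{eq:korn} to control $\Vert\vv\Vert_1$ by the $\varepsilon(\vv)$-part of the energy norm, and the coefficient bounds from Assumption~\ref{model coefficients}. Your write-up is merely more explicit about the constants (e.g.\ the $\lambda_0^{-1/2}$ and $(2\mu_1)^{-1/2}$ reductions and the bound $|i/(i\omega-\omega^2\tau)|\le\omega^{-1}$), which the paper leaves as ``straightforward.''
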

\begin{proof} Using the Cauchy--Schwarz inequality, H\"older's inequality, \eqref{eq:tensor_K}, Poincar\'e--Steklov inequality, and Korn's inequality \eqref{eq:korn} yields
\begin{eqnarray*}%\label{eq:A12-continuity}
	\left| \langle \mathcal{A}_{1,2}(\uve),\vve \rangle \right| 
	&\leq&\omega^2\rho\nnormZero{\uu}\nnormZero{\vv} + 2\mu\nnormZero{\varepsilon(\uu)}\nnormZero{\varepsilon(\vv)} + \lambda\nnormZero{\ddiv\uu}\nnormZero{\ddiv\vv}
    \nonumber \\ 
    && + \omega^2\rho_f\nnormZero{\ww}\nnormZero{\vv} + \omega^2\rho_f\nnormZero{\uu}\nnormZero{\zz} 
	+ \omega^2\rho_w\nnormZero{\ww}\nnormZero{\zz}\nonumber\\
    &&+ \omega k_{\min}^{-1}\nnormZero{\ww}\nnormZero{\zz}\nonumber\\
	&\leq& C\left((2\mu)^{1/2}\nnormZero{\varepsilon(\uu)}+
	(\lambda)^{{1}/{2}}\nnormZero{\ddiv\uu}+\nnormDiv{\ww}\right) \nonumber \\
	& & \times \left((2\mu)^{1/2}\nnormZero{\varepsilon(\vv)}+(\lambda)^{{1}/{2}}\nnormZero{\ddiv\vv}+\nnormDiv{\zz}\right).
\end{eqnarray*}
Applying the same tools to all other sub-operators and the right-hand side, and using Assumption \ref{model coefficients}, proves the 
statement of the lemma in a straightforward way. 
\end{proof}

%需要对参数假设，做一些说明吧
\begin{lemma}[Well-posedness of $\mathcal{A}_{\mathrm{sub}}$]\label{lemma:bijective-Asub}
	Under the hypotheses of Assumption~\ref{model coefficients}, 
	provided that 
\begin{equation}\label{eq:assum_for_Asub}
    \lambda > \frac{\alpha^2}{c_0} \max \left\{\frac12, \frac8{\gamma_{\min}} \right\}, \quad
    \gamma_{\min}-\frac{2\omega^3\rho_f^2}{\rho\(k_{\max}^{-1}-\omega\rho_w\)} > 0,
\end{equation}
and the frequency is bounded from above by $\omega_{\rm crit}$ defined in \eqref{remark:low frequency}, 
	the operator $\mathcal{A}_{\mathrm{sub}}$ is bijective.
\end{lemma}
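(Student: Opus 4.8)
The plan is to prove that $\mathcal{A}_{\mathrm{sub}}$ is $\mathsf{T}$-coercive, so that bijectivity follows from Lemma~\ref{lem:T-coer} together with the continuity recorded in Lemma~\ref{lem:Acont}. First I would eliminate the pore pressure: since $c_0>0$ by Assumption~\ref{model coefficients}, testing the third (pressure) equation with $q\in Q=L^2(\OO)$ gives $c_0\,p+\alpha\ddiv\uu+\ddiv\ww=f$ in $L^2(\OO)$, where $f$ is the $L^2$-representative of the data acting on $Q$; in particular $\nnormZero{p}\lesssim\nnormZero{\ddiv\uu}+\nnormZero{\ddiv\ww}+\nnormZero{f}$. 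Substituting, the block operator in \eqref{eq:oper-decomp} becomes equivalent to the operator on $\VV\times\ZZ$ induced by the reduced sesquilinear form
\[
\tilde a\big((\uu,\ww),(\vv,\zz)\big):=a_1(\uu,\vv)+a_2(\ww,\zz)+b_1(\ww,\vv)+b_1^*(\uu,\zz)+\frac1{c_0}\uprod{\alpha\ddiv\uu+\ddiv\ww}{\alpha\ddiv\vv+\ddiv\zz},
\]
whose last (Schur-complement) term is Hermitian and positive semidefinite; $\tilde a$ is continuous on $\VV\times\ZZ$, and bijectivity of $\mathcal{A}_{\mathrm{sub}}$ is equivalent to bijectivity of the reduced operator, the relevant norms being equivalent.

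For the reduced problem I would take as candidate operator $\mathsf{T}(\uu,\ww):=(\mathbb{T}\uu,\ww)$, with $\mathbb{T}=\II_{\VV}-2\Pim$ the bijection of Lemma~\ref{lemma:T-coer}; then $\mathsf{T}\in\mathcal{L}(\VV\times\ZZ)$ is bijective. Because $z:=\tilde a((\uu,\ww),(\mathbb{T}\uu,\ww))$ is complex, I would use $|z|\ge\frac1{\sqrt2}\big(\re z+\im z\big)$ and bound $\re z+\im z$ from below. Three structural facts drive this: by Lemma~\ref{lemma:T-coer}, $a_1(\uu,\mathbb{T}\uu)$ is real and $\ge\gamma_{\min}\Vert\uu\Vert_{1,\mu,\lambda,\rho}^2$; the form $a_2(\ww,\ww)$ contributes $-\omega^2\rho_w\nnormZero{\ww}^2$ to $\re z$ and $\omega\,\uprod{\boldsymbol K^{-1}\ww}{\ww}\ge\omega k_{\max}^{-1}\nnormZero{\ww}^2$ to $\im z$, so the coefficient of $\nnormZero{\ww}^2$ in $\re z+\im z$ is $\omega(k_{\max}^{-1}-\omega\rho_w)$, strictly positive precisely because $\omega<\omega_{\mathrm{crit}}$ from \eqref{remark:low frequency}; and the Schur term, after writing $\alpha\ddiv\mathbb{T}\uu=\alpha\ddiv\uu-2\alpha\ddiv\Pim\uu$, yields $c_0^{-1}\Vert\alpha\ddiv\uu+\ddiv\ww\Vert_0^2$, hence control of $\nnormZero{\ddiv\ww}^2$ up to a multiple of $\alpha^2\nnormZero{\ddiv\uu}^2$, minus cross terms carrying $\ddiv\Pim\uu$.

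It then remains to absorb the cross terms by Young's inequality, and this is where \eqref{eq:assum_for_Asub} enters. Using the $L^2$-orthogonality \eqref{eq:L2_ortho} of the elasticity eigenbasis (so that $\mathbb{T}$ is an $L^2$-isometry), the inertial coupling $b_1(\ww,\mathbb{T}\uu)+b_1^*(\uu,\ww)$ reduces to terms bounded by a multiple of $\omega^2\rho_f\nnormZero{\ww}\nnormZero{\uu}$; splitting this between the now-positive $\omega(k_{\max}^{-1}-\omega\rho_w)\nnormZero{\ww}^2$ and $\Vert\uu\Vert_{0,\rho}^2\le\Vert\uu\Vert_{1,\mu,\lambda,\rho}^2$ leaves a residual on $\Vert\uu\Vert_{1,\mu,\lambda,\rho}^2$ of coefficient $\gamma_{\min}-\frac{2\omega^3\rho_f^2}{\rho(k_{\max}^{-1}-\omega\rho_w)}$, which is positive by the second inequality of \eqref{eq:assum_for_Asub}. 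The terms involving $\ddiv\Pim\uu$ and the spurious $\alpha^2\nnormZero{\ddiv\uu}^2$ are controlled through $\nnormZero{\ddiv\Pim\uu}\le\lambda^{-1/2}\Vert\uu\Vert_{1,\mu,\lambda}$ and $\nnormZero{\ddiv\uu}\le\lambda^{-1/2}\Vert\uu\Vert_{1,\mu,\lambda}$, so they fall into $\gamma_{\min}\Vert\uu\Vert_{1,\mu,\lambda,\rho}^2$ and $c_0^{-1}\nnormZero{\ddiv\ww}^2$ as soon as $\lambda$ exceeds the threshold in the first inequality of \eqref{eq:assum_for_Asub}. Collecting everything gives $\re z+\im z\gtrsim 2\mu\nnormZero{\varepsilon(\uu)}^2+\lambda\nnormZero{\ddiv\uu}^2+\nnormDiv{\ww}^2$, i.e.\ $\tilde a$ is $\mathsf{T}$-coercive, and Lemma~\ref{lem:T-coer} then yields bijectivity of the reduced operator, hence of $\mathcal{A}_{\mathrm{sub}}$.

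The main obstacle is making all absorptions close simultaneously: the three positive quantities $\gamma_{\min}\Vert\uu\Vert_{1,\mu,\lambda,\rho}^2$, $\omega(k_{\max}^{-1}-\omega\rho_w)\nnormZero{\ww}^2$ and $c_0^{-1}\nnormZero{\ddiv\ww}^2$ must at once dominate (a) the sign-indefinite, non-compact $\rho_f$-coupling of $\uu$ and $\ww$, controllable only because the low-frequency constraint forces $k_{\max}^{-1}-\omega\rho_w>0$, with sharp margin given by the second condition of \eqref{eq:assum_for_Asub}; (b) the sign flips that $\mathbb{T}$ introduces into every $\ddiv$-term, the $\lambda$- and Schur terms included, which are tamed only because $\Pim$ projects onto the finite-dimensional elasticity eigenspace $\VV^-$ and $\lambda$ is large; and (c) the circumstance that $\nnormZero{\ddiv\ww}$ is controlled only indirectly, through the Schur complement produced by eliminating $p$. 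Producing constants consistent with \eqref{eq:assum_for_Asub} is the genuine content of the argument.
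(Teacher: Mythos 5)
Your proof is correct in substance, but it follows a genuinely different route from the paper. You eliminate the pressure via the Schur complement (legitimate, since $d(p,q)=c_0(p,q)$ with $c_0>0$ makes the $(2,2)$-block an isomorphism of $Q=L^2(\OO)$) and then prove direct $\mathsf{T}$-coercivity of the reduced two-field form with the multiplier $(\uu,\ww)\mapsto(\mathbb{T}\uu,\ww)$, concluding by Lemma~\ref{lem:T-coer}. The paper instead keeps all three fields and argues in three steps: a G\r{a}rding inequality with the multiplier $\mathsf{R}_1\uve=(\uu,\ww,-ip+\gamma\ddiv\ww)$ (whose $\gamma\ddiv\ww$ entry is what produces the $\nnormZero{\ddiv\ww}^2$ control), a separate injectivity argument with $\mathsf{R}_2\uve=(\mathbb{T}\uu,\ww,-ip)$, and then the Fredholm alternative to absorb the compact $L^2$-defect $\sim\nnormZero{\uu}^2$ left by the G\r{a}rding inequality. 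Your observation that the positive semidefinite Schur term $c_0^{-1}\Vert\alpha\ddiv\uu+\ddiv\ww\Vert_0^2$ supplies exactly the missing $\nnormZero{\ddiv\ww}^2$ control that blocks the paper's Step~2 from being a coercivity estimate is the key simplification: combined with $a_1(\uu,\mathbb{T}\uu)\geq\gamma_{\min}\Vert\uu\Vert^2_{1,\mu,\lambda,\rho}$, it removes the need for the Fredholm/compactness machinery at the level of $\mathcal{A}_{\mathrm{sub}}$ and yields an explicit coercivity (hence stability) constant. What your route gives up is generality in $c_0$ (your constant degenerates as $c_0\to0$, though the paper's choice $\gamma=\nicefrac{1}{6c_0}$ has the same defect) and the G\r{a}rding inequality \eqref{eq:garding2-2} as a reusable intermediate result. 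One point to tighten: your blanket bounds $|z|\geq\tfrac{1}{\sqrt2}(\re z+\im z)$ and $|\re w|+|\im w|\leq\sqrt2\,|w|$ for the cross terms lose factors of $\sqrt2$ relative to the paper's sharper estimate \eqref{eq:T1_inj_0}, which exploits the $L^2$-orthogonal splitting of $\uu\pm\mathbb{T}\uu$; to land exactly on the thresholds stated in \eqref{eq:assum_for_Asub} (the factor $8/\gamma_{\min}$ and the coefficient $2$ in the second condition) you need that finer bookkeeping, otherwise your argument proves the lemma only with slightly larger constants.
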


\begin{proof}   
    As already mentioned at the beginning of this section, we denote in this proof a vector from $\VV\times\ZZ\times Q$ by an arrow on top, e.g., 
    $\vve = (\vv,\zz,q)$. Now, an operator $\mathsf{R}_1\ : \ \VV\times\ZZ\times Q \to \VV\times\ZZ\times Q$ is defined by 
    $\mathsf{R}_1\uve=(\uu,\ww,-ip+\gamma\ddiv\ww)$, where $\gamma$ is a real parameter to be determined. 
    Clearly, this operator is linear and injective. The operator $\mathsf{R}_1$ is also surjective, 
    since $\uu=\uu'$, $\ww=\ww'$, $p = (\gamma\ddiv \ww'-p')/i$ is mapped to given $(\uu',\ww',p')$.
    Altogether, $\mathsf{R}_1$ is an isomorphism. Its adjoint operator is denoted by $\mathsf{R}_1^*$. To get the bijectivity of $\mathcal{A}_{\mathrm{sub}}$, we perform the following three main steps.
    
    \textbf{Step 1:} {\em Show that $\mathsf{R}^{*}_1\mathcal{A}_{\mathrm{sub}}$ satisfies a G\r{a}rding inequality.}
    A straightforward calculation yields
    \begin{eqnarray} \label{eq:garding1}
        \langle \mathcal{A}_{\mathrm{sub}}\uve, \mathsf{R}_1\uve \rangle & = & a_1(\uu,\uu)+a_2(\ww,\ww)-b_2^*(\uu,\gamma\ddiv\ww) -b_3^*(\ww,\gamma\ddiv\ww) \nonumber\\ 
        &&+d(p,-ip+\gamma\ddiv\ww) + \sum_{i=1}^3\Phi_i,
    \end{eqnarray}
    with 
    \[
    \Phi_1 = b_1(\ww,\uu)+b_1^*(\uu,\ww), \; \Phi_2 = b_2(p,\uu)-ib_2^*(\uu,p),\;
    \Phi_3 = b_3(p,\ww)-ib_3^*(\ww,p).
    \]
    Notice that these sesquilinear forms satisfy 
    \begin{eqnarray*}
    \re\Phi_1+\im \Phi_1 &=& 2\re b_1(\ww,\uu), \\ 
    \re\Phi_2+\im \Phi_2 &=& 0, \; \re\Phi_3+\im \Phi_3 = 0.
    \end{eqnarray*}
    Inserting these relations into \eqref{eq:garding1} yields
    \begin{eqnarray} \label{eq:garding2}
    \lefteqn{\re \langle\mathcal{A}_{\mathrm{sub}}\uve, \mathsf{R}_1\uve \rangle + \im\langle\mathcal{A}_{\mathrm{sub}}\uve, \mathsf{R}_1\uve \rangle }\nonumber\\
    &\geq& a_1(\uu,\uu) + 2\re b_1(\ww,\uu) + (\re+\im)a_2(\ww,\ww) + \gamma\nnormZero{\ddiv\ww}^2 + c_0\nnormZero{p}^2 \nonumber\\
    && - (\re+\im)b_2^*(\uu,\gamma\ddiv\ww) + (\re+\im)d(p, \gamma\ddiv\ww).
    \end{eqnarray}
    Utilizing the expressions from \eqref{eq:a_1_form}-\eqref{eq:d_form}, and repeatedly using the Cauchy--Schwarz and Young's inequalities gives 
    \begin{eqnarray*}
        2\re b_1(\ww,\uu) &\leq& \frac{\omega^2\rho_f}{\epsilon_1}\nnormZero{\uu}^2 + \omega^2\rho_f\epsilon_1\nnormZero{\ww}^2,\\
        (\re+\im)b_2^*(\uu,\gamma\ddiv\ww)&\leq&\frac{\gamma\alpha}{\epsilon_2}\nnormZero{\ddiv\uu}^2 + \gamma\alpha\epsilon_2\nnormZero{\ddiv\ww}^2,\\
        (\re+\im)d(p,\gamma\ddiv\ww)&\leq&\frac{c_0\gamma}{\epsilon_3}\nnormZero{p}^2 + c_0\gamma\epsilon_3\nnormZero{\ddiv\ww}^2.
    \end{eqnarray*}
    Substituting these inequalities into \eqref{eq:garding2} and using the bound of the hydraulic mobility \eqref{eq:tensor_K} leads to 
    \begin{eqnarray*}
    \lefteqn{
    (\re+\im) \langle\mathcal{A}_{\mathrm{sub}}\uve, \mathsf{R}_1\uve \rangle}\\
    &\geq & \(-\omega^2\rho-\frac{\omega^2\rho_f}{\epsilon_1}\)\nnormZero{\uu}^2 + 2\mu\nnormZero{\varepsilon(\uu)}^2 + \(\lambda-\frac{\gamma\alpha}{\epsilon_2}\)\nnormZero{\ddiv\uu}^2 \\
        && + \(\omega k_{\max}^{-1}-\omega^2\rho_w-\omega^2\rho_f\epsilon_1\)\nnormZero{\ww}^2 \\
        && + \gamma \( 1- \alpha\epsilon_2-c_0\epsilon_3\)\nnormZero{\ddiv\ww}^2 + \(c_0-\frac{c_0\gamma}{\epsilon_3}\)\nnormZero{p}^2.
    \end{eqnarray*}
    We set $\epsilon_1=\nicefrac{(\omega k_{\max}^{-1}-\omega^2\rho_w)}{2\omega^2\rho_f}$, $\epsilon_2=\nicefrac{1}{3\alpha}$, $\epsilon_3=\nicefrac{1}{3c_0}$, $\gamma=\nicefrac{\epsilon_3}{2}$. 
    Taking into account the restriction to the wave number \eqref{remark:low frequency}, one finds that the term $\omega k_{\max}^{-1}-\omega^2\rho_w$ is positive. 
    It follows, using also the first assumption from \eqref{eq:assum_for_Asub}, that all coefficients of the above terms are positive. 
    Hence, we proved a G\r{a}rding inequality, namely that there is a positive constant $C$ such that
    \begin{equation} \label{eq:garding2-2}
        2|\langle\mathsf{R}_1^*\mathcal{A}_{\mathrm{sub}}\uve, \uve \rangle| + \(\omega^2\rho+\frac{2\omega^3\rho_f^2}{ k_{\max}^{-1}-\omega\rho_w}\)\nnormZero{\uu}^2 \geq C\Vert\uu\Vert^2_{\UU}.
    \end{equation}
    
	\textbf{Step 2:} {\em $\mathsf{R}_2^*\mathcal{A}_{\mathrm{sub}}$ is injective.}
	Define a bijective operator $\mathsf{R}_2\ : \ \VV\times\ZZ\times Q \to \VV\times\ZZ\times Q$ by 
    $\mathsf{R}_2\uve=(\mathbb{T}\uu, \ww, -ip)$, with the adjoint operator $\mathsf{R}_2^*$, 
    where $\mathbb{T}$ was defined in \eqref{eq:prop_T}, then 
    \begin{eqnarray*} 
        \langle \mathcal{A}_{\mathrm{sub}}\uve,\mathsf{R}_2\uve \rangle &=& a_1(\uu,\mathbb{T}\uu) - \omega^2\rho_w\nnormZero{\ww}^2 
            +i\omega\nnormZero{\mathbf{K}^{-\frac{1}{2}}\ww}^2 + c_0 i\nnormZero{p}^2 \nonumber \\
            && + \Phi_1^{\prime} + \Phi_2^{\prime} + \Phi_3,
    \end{eqnarray*}
    with 
\[
\Phi_1^{\prime} = b_1(\ww,\mathbb{T}\uu)+b_1^*(\uu,\ww), \quad \Phi_2^{\prime} = b_2(p,\mathbb{T}\uu)-ib_2^*(\uu,p).
\]
	Let $\uve\in\UU$ such that $\mathsf{R}^{*}_2\mathcal{A}_{\mathrm{sub}}\uve=\zero$. Since 
    both the real and imaginary parts of $\mathsf{R}^{*}_2\mathcal{A}_{\mathrm{sub}}\uve$ vanish and 
    $\re\Phi_3+\im\Phi_3=0$, we get
	\begin{eqnarray}\label{eq:T1_inj}
		0&= & |\langle \mathcal{A}_{\mathrm{sub}}\uve,\mathsf{R}_2\uve \rangle| = 
		\re\langle \mathcal{A}_{\mathrm{sub}}\uve,\mathsf{R}_2\uve \rangle + \im\langle         
            \mathcal{A}_{\mathrm{sub}}\uve,\mathsf{R}_2\uve \rangle \nonumber \\
		&\geq& a_1(\uu,\mathbb{T}\uu) + \(\omega k^{-1}_{\max}-\omega^2\rho_w\)\nnormZero{\ww}^2 
            + c_0\nnormZero{p}^2 + \re\Phi_1^{\prime}+\im\Phi_1^{\prime} \nonumber\\
            &&+ \re\Phi_2^{\prime}+\im\Phi_2^{\prime}.
	\end{eqnarray}
    Expanding $\uu$ in terms of the eigenfunctions \eqref{eq:expansion_efct} and utilizing the property \eqref{eq:prop_T}
	of the operator $\mathbb{T}$, Young's inequality, and the $L^2$ orthogonality  of the eigenfunctions \eqref{eq:L2_ortho} yields
	\begin{eqnarray}\label{eq:T1_inj_0}
		\re\Phi_1^{\prime}+\im\Phi_1^{\prime}&=&-\omega^2\rho_f\re(\ww, \uu+\mathbb{T}\uu) -\omega^2\rho_f\im(\ww, \mathbb{T}\uu-\uu)\nonumber \\
        &=&-\omega^2\rho_f\re\(\ww, \sum_{n\geq\overline{m}}2\alpha_n\ppsi_n\)+\omega^2\rho_f\im\(\ww,\sum_{0\leq n\leq\overline{m}}2\alpha_n\ppsi_n\)\nonumber \\
		&\geq& -\omega^2\rho_f\epsilon_1\nnormZero{\ww}^2 - \frac{\omega^2\rho_f}{\epsilon_1}\nnormZero{\uu}^2.
	\end{eqnarray}
	\begin{eqnarray}\label{eq:T1_inj_1}
		\re\Phi_2^{\prime}+\im\Phi_2^{\prime}&=&\alpha\re\(p,\ddiv(\uu-\mathbb{T}\uu)\) + \alpha\im\(p,\ddiv(\uu-\mathbb{T}\uu)\)\nonumber \\
		&\geq& -\alpha\epsilon_2\nnormZero{p}^2 - \frac{\alpha}{\epsilon_2}\nnormZero{\ddiv(\uu-\mathbb{T}\uu)}^2.
	\end{eqnarray}
    Utilizing  the orthogonality of the eigenfunctions with respect to the inner product $(\cdot,\cdot)_{1,\mu,\lambda}$, one has
    \begin{equation}\label{eq:T1_inj_2}
        \nnormZero{\ddiv(\uu-\mathbb{T}\uu)}^2 \leq \frac{1}{\lambda}\Vert \uu-\mathbb{T}\uu \Vert_{1,\mu,\lambda}^2 \leq \frac{4}{\lambda}\Vert\uu\Vert_{1,\mu,\lambda}^2 
		= \frac{4}{\lambda}\Vert\uu\Vert^2_{1,\mu} + 4\nnormZero{\ddiv\uu}^2.
    \end{equation}
	Using in addition the $\mathbb{T}$-coercivity \eqref{eq:t_coerc} of $a_1(\cdot,\cdot)$ yields
	\begin{eqnarray*}%\label{eq:garding3}
		|\langle \mathcal{A}_{\mathrm{sub}}\uve, \mathsf{R}_2\uve \rangle|&\geq& \(\gamma_{\min}-\frac{4\alpha}{\lambda\epsilon_2}\) \Vert\uu\Vert_{1,\mu}^2 + 
        \(\gamma_{\min}\rho-\frac{\omega^2\rho_f}{\epsilon_1}\)\nnormZero{\uu}^2\nonumber\\
        && \(\gamma_{\min}\lambda-\frac{4\alpha}{\epsilon_2}\)\nnormZero{\ddiv\uu}^2
        + \(\omega k^{-1}_{\max}-\omega^2\rho_w-\omega^2\rho_f\epsilon_1\)\nnormZero{\ww}^2\nonumber\\
        &&+ (c_0-\alpha\epsilon_2)\nnormZero{p}^2.
	\end{eqnarray*}
    Setting $\epsilon_2=\nicefrac{c_0}{2\alpha}$ gives, together with assumption \eqref{eq:assum_for_Asub}, that the coefficients 
    in front of $\Vert\uu\Vert_{1,\mu}^2$, $\nnormZero{\ddiv\uu}^2$, and $\nnormZero{p}^2$ are  positive. 
    Like in the first step of the proof, we set $\epsilon_1=\nicefrac{(\omega k_{\max}^{-1}-\omega^2\rho_w)}{2\omega^2\rho_f}$, which is positive
    due to \eqref{remark:low frequency} and \eqref{eq:assum_for_Asub}.
    Substituting then $\epsilon_1$ into the coefficients
	of $\nnormZero{\uu}^2$ shows, together with the third constraint from \eqref{eq:assum_for_Asub}, that also the coefficient 
    in front of $\nnormZero{\uu}^2$ is positive. With \eqref{eq:T1_inj}, 
     the injectivity of $\mathsf{R}_2^{*}\mathcal{A}_{\mathrm{sub}}$ can be inferred. 
     
\textbf{Step 3:} {\em $\mathcal{A}_{\mathrm{sub}}$ is bijective.}
    Since $\mathsf{R}_1$ and $\mathsf{R}_2$ are bijective, linear, and bounded, their adjoint operators  $\mathsf{R}_1^{*}$ and $\mathsf{R}_2^{*}$ are bijective. 
    From the injectivity of $\mathsf{R}_2^*\mathcal{A}_{\mathrm{sub}}$ proved in Step~2, one can deduce the injectivity of $\mathsf{R}_1^*(\mathsf{R}_2^{*-1}\mathsf{R}_2^*)\mathcal{A}_{\mathrm{sub}}$, i.e., the injectivity of $\mathsf{R}_1^*\mathcal{A}_{\mathrm{sub}}$.
    Define the operator $K:\VV\to\VV$ by
    \begin{equation*}
        \langle K\uve,\vve\rangle = -\(\frac{\omega^2\rho}{2}+\frac{\omega^3\rho_f^2}{ k_{\max}^{-1}-\omega\rho_w}\)(\uu,\vv), 
        \quad \forall\ \uu,\vv\in\VV.
    \end{equation*}
    Note that $K$ is bounded, self-adjoint, and compact, see  $\S$8.2 in \cite{SayasVariational}. 
    Utilizing the G\r{a}rding inequality \eqref{eq:garding2-2}, the operator $\mathcal{A}_{G}:=\mathsf{R}^{*}_1\mathcal{A}_{\mathrm{sub}} - K$ is invertible.
    Since  $\mathsf{R}_1^{*}\mathcal{A}_{\mathrm{sub}}$ was shown to be injective, 
    employing the Fredholm's alternative Lemma~\ref{the-Fredholm-alternative} gives the bijectivity of $\mathsf{R}_1^{*}\mathcal{A}_{\mathrm{sub}}$, and in consequence the bijectivity of $\mathcal{A}_{\mathrm{sub}}$.
\end{proof}

\begin{remark}[On the assumptions for Lemma~\ref{lemma:bijective-Asub}]
The operator $\mathcal{A}_{\mathrm{sub}}$ can be written in the block form \eqref{eq:oper-decomp}. 
Standard theory for this type of linear block systems states that $\mathcal{A}_{\mathrm{sub}}$ is bijective if and only if the 
Schur complement $\mathcal{B}^*\mathcal{A}_{1,2}^{-1}\mathcal{B}+D $ is bijective. If, e.g., all eigenvalues of 
$\mathcal{A}_{1,2}$ are positive and all eigenvalues of $D$ nonnegative, then the well-posedness of the Schur 
complement operator is given, since then the eigenvalues of the Schur complement are positive. However, the location of the eigenvalues of $\mathcal{A}_{1,2}$ is not known and a reasoning as above cannot be applied. 
In fact, one can easily construct (finite-dimensional) examples where the 
Schur complement, and consequently the corresponding block system, is not invertible although $\mathcal{A}_{1,2}$ and $D$ are invertible.
Thus, it is not unexpected that the well-posedness of $\mathcal{A}_{\mathrm{sub}}$ holds only with appropriate assumptions.

For the studied problem, all eigenvalues of $D$ are in the right complex plane. Thus, one can expect to prove the 
well-posedness of the problem if the second term of the Schur complement dominates the first one. Roughly speaking, this situation is
given, for fixed $\omega$, if $c_0$ is large, $\alpha$ is small, and $k_{\mathrm{max}}$ is small. These requirements can be found 
in the assumption that \eqref{remark:low frequency} is satisfied and in the first condition of \eqref{eq:assum_for_Asub}.
\begin{itemize}
\item Concerning the first condition in \eqref{eq:assum_for_Asub}, it is known from \cite{LeeParameter} that $c_0$ scales like $\alpha^2/\lambda$. Hence, this assumption is reasonable. 
\item The second condition in \eqref{eq:assum_for_Asub} contains the constant $\gamma_{\min}$ defined in 
\eqref{eq:alpha_min}. Hence, this condition will be satisfied if $\omega$ is sufficiently small.
\end{itemize}   
Altogether, the conditions in \eqref{eq:assum_for_Asub} restrict the analysis to the low frequency range. 
\end{remark}

\begin{lemma}[Compactness of $\mathcal{C}_1$ and $\mathcal{C}_2$]\label{lemma:compact-C}
	The operators $\mathcal{C}_1$ and $\mathcal{C}_2$, defined in \eqref{eq:op-C1} and \eqref{eq:op-C2}, are compact.
\end{lemma}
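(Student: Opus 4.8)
The plan is to realise both $\mathcal{C}_1$ and $\mathcal{C}_2$ as compositions of a bounded linear operator with the compact Sobolev embedding $\iota_S : S\hookrightarrow L^2(\OO)$, or with its transpose. Since $\Omega$ is bounded and Lipschitz, $\iota_S$ is compact by the Rellich--Kondrachov theorem (see Theorem~6.2 in \cite{Ada75}), and the composition of a compact operator with a bounded one is again compact.

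For $\mathcal{C}_1$, observe from \eqref{eq:c1_form}, \eqref{eq:c2_form}, and \eqref{eq:op-C1} that the temperature enters only through the $L^2(\OO)$ pairings $\uprod{T}{\ddiv\vv}$ and $\uprod{T}{q}$. Hence I would write $\mathcal{C}_1 = \mathcal{M}_1\circ\iota_S$, where $\mathcal{M}_1 : L^2(\OO)\to(\VV\times\ZZ\times Q)^*$ is defined by $\langle\mathcal{M}_1 T,(\vv,\zz,q)\rangle := -\beta\uprod{T}{\ddiv\vv}-b_0\uprod{T}{q}$. The boundedness of $\mathcal{M}_1$ follows from the Cauchy--Schwarz inequality together with the bounds $\nnormZero{\ddiv\vv}\le\lambda^{-1/2}\Vert(\vv,\zz,q)\Vert$ and $\nnormZero{q}\le\Vert(\vv,\zz,q)\Vert$ read off the definition of $\Vert\cdot\Vert_{\UU}$, using also Assumption~\ref{model coefficients}. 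This yields the compactness of $\mathcal{C}_1$ immediately.

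For $\mathcal{C}_2$ the compactness cannot come from the domain, since the pressure space $Q=L^2(\OO)$ is not compactly embedded anywhere useful and $\ddiv\uu$ is controlled only in $L^2(\OO)$; it is produced on the target side instead. Using $c_1^*(\uu,s)=-\beta\uprod{\ddiv\uu}{s}$ and $c_2^*(p,s)=-b_0\uprod{p}{s}$, one gets $\langle\mathcal{C}_2(\uu,\ww,p),s\rangle = i\,\uprod{\beta\ddiv\uu-b_0 p}{s}$ for all $s\in S$. Thus $\mathcal{C}_2 = j\circ\mathcal{M}_2$, where $\mathcal{M}_2 : \VV\times\ZZ\times Q\to L^2(\OO)$, $\mathcal{M}_2(\uu,\ww,p):= i(\beta\ddiv\uu-b_0 p)$, is bounded (again by Cauchy--Schwarz and the definition of $\Vert\cdot\Vert_{\UU}$), and $j : L^2(\OO)\to S^{*}$, $f\mapsto\big(s\mapsto\uprod{f}{s}\big)$, is compact, being --- up to the Riesz identification of $(L^2(\OO))'$ with $L^2(\OO)$ --- the transpose of the compact embedding $\iota_S$ (Schauder's theorem). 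Hence $\mathcal{C}_2$ is compact.

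The calculations are routine; the only point that needs care is the factorization of $\mathcal{C}_2$, i.e., the observation that although $\VV\times\ZZ\times Q$ carries no useful compactness, the inclusion $L^2(\OO)\hookrightarrow S^{*}$ dual to $S\hookrightarrow L^2(\OO)$ is compact, so that pairing against functions of $S$ has a regularizing effect. An equivalent route that avoids Schauder's theorem is the sequential one: for a bounded sequence in $\VV\times\ZZ\times Q$, extract weakly convergent subsequences of $\ddiv\uu_n$ and of $p_n$ in $L^2(\OO)$, and use that a weakly null sequence in $L^2(\OO)$ tends to zero uniformly over the precompact image of the closed unit ball of $S$ under $\iota_S$.
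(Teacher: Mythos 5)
Your proposal is correct and follows essentially the same route as the paper: both factor the operators through the compact embedding of $S$ into $L^2(\OO)$ and compose with bounded linear maps. The only difference is one of explicitness --- the paper proves compactness of the induced operators $S\to\VV^*$ and $S\to Q$ and then passes to $\mathcal{C}_2$ implicitly, whereas you spell out that for $\mathcal{C}_2$ the compactness sits on the target side via the transpose of the embedding (Schauder's theorem); this is a worthwhile clarification but not a different argument.
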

\begin{proof}
    Let $\mathfrak{C}_1: S\rightarrow \VV^*$ and $\mathfrak{C}_2: S\rightarrow Q$ be the operators induced by 
    the sesquilinear forms $c_1(\cdot,\cdot)$ and $c_2(\cdot,\cdot)$, i.e.,
    \begin{equation*} %\label{eq:compC1*}
        \langle \mathfrak{C}_1(T), \vv \rangle=c_1(T,\vv), \quad \langle \mathfrak{C}_2(T), q \rangle= c_2(T,q).
    \end{equation*}
    Moreover, let $I$ be the identity operator and $i_c$ be the compact embedding from $H^1(\OO)$ into $L^2(\OO)$.
    Define the operator $\mathfrak{C}_{\ddiv}: Q\rightarrow \VV^*$, such that
    \begin{equation*}
        \langle \mathfrak{C}_{\ddiv} q, \vv \rangle = (q, \ddiv\vv), \quad \forall\ q\in Q,\, \vv\in \VV.
    \end{equation*}
    Due to the property that $\nnormZero{\ddiv\vv}\leq\sqrt{d}\nnormZero{\nabla\vv}$, $\mathfrak{C}_{\ddiv}$ is a bounded and linear operator.  
    Then, it is straightforward that $\mathfrak{C}_1=-\beta I\circ\mathfrak{C}_{\ddiv}\circ i_c$, as a composition of a compact operator and bounded linear operators, is compact. 
    Furthermore, $\mathfrak{C}_2=-b_0 I \circ i_c$ is also compact. 
    Using the expressions   \eqref{eq:op-C1} and \eqref{eq:op-C2} of $\mathcal{C}_1$ and $\mathcal{C}_2$, respectively, completes the proof. 
\end{proof}

%紧算子的写法，可能不合适，不能说双线性形式是紧的
\begin{lemma}[Injectivity of $\mathcal{A}$] \label{lemma:InjecA}
	Let Assumption~\ref{model coefficients} be satisfied and let $\omega$ be smaller than 
    $\omega_{\rm crit}$ defined in \eqref{remark:low frequency}. 
    Assume that the second condition from \eqref{eq:assum_for_Asub} is satisfied and 
    \begin{equation} \label{eq:assum_for_AInj}
       \tau<\frac{1}{\omega}, \quad
        a_0 > \frac{2b_0^2}{c_0}, \quad 
        \lambda > \frac{8}{\gamma_{\min}}\max\left\{\frac{2\alpha^2}{c_0}, \frac{\beta^2}{a_0-\nicefrac{2b_0^2}{c_0}} \right\}.
    \end{equation}
    Then, the operator $\mathcal{A}$ is injective.
\end{lemma}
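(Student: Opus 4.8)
The plan is to prove injectivity by the testing technique used in Step~2 of the proof of Lemma~\ref{lemma:bijective-Asub}, now carried out for the full operator $\mathcal{A}$. Let $\uve=(\uu,\ww,p,T)\in\UU$ satisfy $\mathcal{A}(\uve)=0$. Define $\mathsf{R}\colon\UU\to\UU$ by $\mathsf{R}(\uu,\ww,p,T):=(\mathbb{T}\uu,\ww,-ip,T)$; this is the operator $\mathsf{R}_2$ of Lemma~\ref{lemma:bijective-Asub} augmented by the identity on $S$, hence linear, bounded, and bijective (recall $\mathbb{T}^2=\II$ by \eqref{eq:prop_T}). Since $\mathcal{A}(\uve)=0$ in $\UU^*$, testing with $\mathsf{R}\uve\in\UU$ gives $\langle\mathcal{A}(\uve),\mathsf{R}\uve\rangle=0$, and therefore $(\re+\im)\langle\mathcal{A}(\uve),\mathsf{R}\uve\rangle=0$. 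The goal is to bound this real quantity from below by $C\bigl(\Vert\uu\Vert_{1,\mu}^2+\nnormZero{\uu}^2+\nnormZero{\ddiv\uu}^2+\nnormZero{\ww}^2+\nnormZero{p}^2+\nnormOne{T}^2\bigr)$ with a constant $C>0$ under the stated hypotheses, which then forces $\uve=\zero$.

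When $\mathsf{R}\uve$ is inserted, the terms coming from the block $\mathcal{A}_{\mathrm{sub}}$ are exactly those treated in Step~2 of Lemma~\ref{lemma:bijective-Asub}, so their $(\re+\im)$-part admits the lower bound derived there, with the Young parameters $\epsilon_1,\epsilon_2$ kept free for now. The temperature block contributes $a_3(T,T)$; using $\tfrac{i}{i\omega-\omega^2\tau}=\tfrac{1}{\omega(1+\omega^2\tau^2)}-i\,\tfrac{\tau}{1+\omega^2\tau^2}$ together with \eqref{eq:tensor_theta} one obtains $(\re+\im)\,a_3(T,T)\geq a_0\nnormZero{T}^2+\tilde\theta\,\nnormZero{\nabla T}^2$ with $\tilde\theta>0$ precisely because $\tau<\nicefrac1\omega$ (and $a_0>0$ since $a_0>\nicefrac{2b_0^2}{c_0}$). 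The coupling terms are $c_1(T,\mathbb{T}\uu)-i\,c_1^*(\uu,T)$ and $c_2(T,-ip)+i\,c_2^*(p,T)$. Writing $\mathbb{T}\uu=\uu-2\Pim\uu$ and using conjugate symmetry, a short computation shows that the terms involving $\ddiv\uu$ cancel in the $(\re+\im)$-combination, leaving only
\begin{equation*}
(\re+\im)\bigl[c_1(T,\mathbb{T}\uu)-i\,c_1^*(\uu,T)\bigr]=2\beta\,(\re+\im)\uprod{T}{\ddiv(\Pim\uu)},
\end{equation*}
together with $(\re+\im)\bigl[c_2(T,-ip)+i\,c_2^*(p,T)\bigr]=-2b_0\,\re\uprod{T}{p}$.

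It remains to absorb these two coupling terms with Young's inequality. For the first, the bound $\nnormZero{\ddiv(\Pim\uu)}^2\le\lambda^{-1}\Vert\Pim\uu\Vert_{1,\mu,\lambda}^2\le\lambda^{-1}\Vert\uu\Vert_{1,\mu,\lambda}^2=\lambda^{-1}\Vert\uu\Vert_{1,\mu}^2+\nnormZero{\ddiv\uu}^2$ (as in \eqref{eq:T1_inj_2}) shows that it produces a term of order $\epsilon\,\nnormZero{T}^2$, a term of order $\epsilon^{-1}\lambda^{-1}\Vert\uu\Vert_{1,\mu}^2$, and a term of order $\epsilon^{-1}\nnormZero{\ddiv\uu}^2$; the second splits between $\nnormZero{T}^2$ and $\nnormZero{p}^2$. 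Taking $\epsilon_1=\nicefrac{(\omega k_{\max}^{-1}-\omega^2\rho_w)}{(2\omega^2\rho_f)}$, which is positive since $\omega<\omega_{\rm crit}$ by \eqref{remark:low frequency}, the coefficient of $\nnormZero{\uu}^2$ becomes $\rho\bigl(\gamma_{\min}-\tfrac{2\omega^3\rho_f^2}{\rho(k_{\max}^{-1}-\omega\rho_w)}\bigr)>0$ by the second condition of \eqref{eq:assum_for_Asub}; choosing $\epsilon_2$ and the two coupling parameters sufficiently small, the coefficients of $\nnormZero{p}^2$ and $\nnormZero{T}^2$ stay positive precisely because $a_0>\nicefrac{2b_0^2}{c_0}$; and the lower bound on $\lambda$ in \eqref{eq:assum_for_AInj}, whose first term accounts for the $b_2,b_3$-couplings and whose second for the $c_1$-coupling, keeps the coefficients of $\Vert\uu\Vert_{1,\mu}^2$ and $\nnormZero{\ddiv\uu}^2$ positive. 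Together with $\tilde\theta\,\nnormZero{\nabla T}^2$ and the Poincar\'e--Steklov inequality in $S$ (valid since $|\GT|>0$), this yields the asserted lower bound, hence $\uve=\zero$, i.e., $\mathcal{A}$ is injective.

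The chief obstacle is that the underlying sesquilinear forms are indefinite and the inner product only satisfies conjugate symmetry, which forces working with the combination $\re+\im$ everywhere; the decisive point is the cancellation of the $\ddiv\uu$-contributions in the $\mathcal{C}_1$-coupling, so that only the finite-dimensional component $\Pim\uu$ remains, which is controllable by a factor $\lambda^{-1}$. The remaining difficulty is purely quantitative: the several Young parameters must be tuned simultaneously so that every coefficient is positive at once, and this is exactly what the three conditions in \eqref{eq:assum_for_AInj}, in conjunction with $\omega<\omega_{\rm crit}$ and the second condition of \eqref{eq:assum_for_Asub}, are designed to guarantee.
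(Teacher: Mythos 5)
Your proposal is correct and follows essentially the same route as the paper: the multiplier $\mathsf{R}\uve=(\mathbb{T}\uu,\ww,-ip,T)$, the $(\re+\im)$ combination, the positivity of the thermal block under $\tau<\nicefrac1\omega$, the reduction of the $c_1$-coupling to $\ddiv(\Pim\uu)$ controlled by a factor $\lambda^{-1}$ via \eqref{eq:T1_inj_2}, and the same attribution of each condition in \eqref{eq:assum_for_AInj} to the corresponding coefficient. The only imprecision is the phrase ``sufficiently small'' for the coupling parameters: they cannot be taken arbitrarily small (the $\nicefrac1\epsilon$ penalties on the $\nnormZero{\ddiv\uu}^2$ and $\Vert\uu\Vert_{1,\mu}^2$ coefficients would then blow up) but must scale like $\nicefrac1\lambda$, exactly as in the paper's explicit choices $\epsilon_2=\nicefrac{8\alpha}{\gamma_{\min}\lambda}$, $\epsilon_3=\nicefrac{8\beta}{\gamma_{\min}\lambda}$, $\epsilon_4=\nicefrac{2b_0}{c_0}$ — a detail your final paragraph already acknowledges in spirit.
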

\begin{proof}
	Define the operator $\mathsf{R}\uve\ : \ \UU\to \UU$, $(\uu,\ww,p, T) \to (\mathbb{T}\uu, \ww, -ip, T)$, with the adjoint operator $\mathsf{R}^*$.
    Since $\mathbb{T}$ is bijective, see the proof of Lemma~\ref{lemma:T-coer}, $\mathsf{R}$ is also bijective. 
    Consider
	\[
		\langle \mathcal{A}\uve,\mathsf{R}\uve \rangle = \langle \mathcal{A}_{\mathrm{sub}}\uve,\mathsf{R}_2\uve \rangle 
		+ ia_0\nnormZero{T}^2 + \frac{1-\omega\tau i}{\omega+\omega^3\tau^2}\nnormZero{\mathbf{\Theta}^{\frac{1}{2}}\nabla T}^2 + \Phi_4 + \Phi_5,
	\]
        where $\mathsf{R}_2$ is defined in Step~2 of the proof of Lemma~\ref{lemma:bijective-Asub}, and 
        \[
        \Phi_4=c_1(T,\mathbb{T}\uu)-ic_1^*(\uu,T), \quad 
        \Phi_5=ic_2(T,p)+ic_2^*(p,T).
        \]
	To prove the injectivity of $\mathcal{A}$, let $\uve\in \UU$ be such that $\mathcal{A}(\uve)=\zero$. The goal is to conclude $\uve=\zero$. 
	If $\langle\mathcal{A}(\uve), \mathsf{R}\uve\rangle=0$, then both the real and imaginary parts of this expression are zero, so that 
    we get, also using \eqref{eq:tensor_theta}, 
	\begin{eqnarray} \label{eq:garding4}
		0&=&\re\langle\mathcal{A}(\uve), \mathsf{R}\uve\rangle + \im\langle\mathcal{A}(\uve), \mathsf{R}\uve\rangle  
            \nonumber\\
		&\geq& \re\langle \mathcal{A}_{\mathrm{sub}}\uve,\mathsf{R}_2\uve \rangle + \im\langle\mathcal{A}_{\mathrm{sub}}\uve,\mathsf{R}_2\uve \rangle + a_0\nnormZero{T}^2 
             + \frac{\theta_{\min}(1-\omega\tau)}{\omega+\omega^3\tau^2}\nnormZero{\nabla T}^2  \nonumber \\
            && + \re\Phi_4 + \im\Phi_4 + \re\Phi_5 + \im\Phi_5.
	\end{eqnarray}
	Using \eqref{eq:prop_T}, \eqref{eq:T1_inj_2}, the Cauchy--Schwarz and Young's inequality, we derive
	\begin{eqnarray}\label{eq:lem_A_inj_0}
		\re\Phi_4 + \im\Phi_4
		&=& \beta\re(T,\ddiv(\uu-\mathbb{T}\uu))+\beta\im(T,\ddiv(\uu-\mathbb{T}\uu))\nonumber \\
        &\geq& -\beta\epsilon_3\nnormZero{T}^2 - \frac{\beta}{\epsilon_3}\nnormZero{\ddiv(\uu-\mathbb{T}\uu)}^2 \nonumber \\
        &\geq& -\beta\epsilon_3\nnormZero{T}^2 - \frac{4\beta}{\lambda\epsilon_3}\Vert\uu\Vert_{1,\mu}^2 - \frac{4\beta}{\epsilon_3}\nnormZero{\ddiv\uu}^2 
	\end{eqnarray}
    and similarly,
	\begin{equation*}
		\re\Phi_5 + \im\Phi_5=-2b_0\re(T,p)\geq -b_0\epsilon_4\nnormZero{T}^2 - \frac{b_0}{\epsilon_4}\nnormZero{p}^2.
	\end{equation*}
    Notice that \eqref{eq:T1_inj}--\eqref{eq:T1_inj_2} provides a lower bound for the first two terms on the right-hand side of \eqref{eq:garding4}.
	Combining this bound and the two estimates from above yields
	\begin{eqnarray}\label{eq:lem_A_inj_1}
		|\langle \mathcal{A}\uve,\mathsf{R}\uve \rangle|&\geq& \(\gamma_{\min}-\frac{4\alpha}{\lambda\epsilon_2}-\frac{4\beta}{\lambda\epsilon_3}\) \Vert\uu\Vert_{1,\mu}^2 + 
            \(\gamma_{\min}\rho-\frac{\omega^2\rho_f}{\epsilon_1}\)\nnormZero{\uu}^2 
		\nonumber\\
		&&+ \(\gamma_{\min}\lambda-\frac{4\alpha}{\epsilon_2}-\frac{4\beta}{\epsilon_3}\)\nnormZero{\ddiv\uu}^2 
		+ (\omega k^{-1}_{\max}-\omega^2\rho_w-\omega^2\rho_f\epsilon_1)\nnormZero{\ww}^2 \nonumber\\
		&& + \(c_0-\alpha\epsilon_2-\frac{b_0}{\epsilon_4}\)\nnormZero{p}^2
		+ (a_0-\beta\epsilon_3-b_0\epsilon_4)\nnormZero{T}^2\nonumber\\
		&& +\frac{\theta_{\min}(1-\omega\tau)}{\omega+\omega^3\tau^2}\nnormZero{\nabla T}^2.
	\end{eqnarray}
Setting $\epsilon_1=\nicefrac{(\omega k_{\max}^{-1}-\omega^2\rho_w)}{2\omega^2\rho_f}$, $\epsilon_2=\nicefrac{8\alpha}{\gamma_{\min}\lambda}$, $\epsilon_3=\nicefrac{8\beta}{\gamma_{\min}\lambda}$, and $\epsilon_4=\frac{2b_0}{c_0}$, 
leads under the assumptions of the lemma to positive terms for $\uu$, $\ww$, and $\nabla T$.  
Hence $\uu=\ww=\zero$ and, using Poincar\'e's inequality, $T=0$.  Inserting these values in \eqref{eq:garding4} we conclude that $p=0$, 
due to $c_0>0$ in Assumption~\ref{model coefficients}.
The injectivity of $\mathsf{R}^*\mathcal{A}$ follows. Due to the bijectivity of $\mathsf{R}^*$, we obtain the injectivity of $\mathcal{A}$.
\end{proof}

\begin{remark}[Size of $\tau$, incompressible materials]
        Introducing the Maxwell--Vernotte--Cattaneo relaxation term with $\tau$ is to avoid infinite speed associated with heat conduction, 
    and to predict the slow thermal (T) wave \cite{JCPBonettiNumerical}.
    Some works, e.g.,  on quasi-static thermo-poroelasticity \cite{AntoniettiDiscontinuous,Yi2024Physics},
    do not include the relaxation term in their models, thus they consider the case $\tau=0$. Hence, assuming 
    that $\tau$ is bounded, as in the first condition of \eqref{eq:assum_for_AInj}, seems to be not restrictive. 

    Consider the incompressible case $\lambda=\infty$, which is equivalent to $\ddiv\uu=0$. Then, the left-hand sides of \eqref{eq:T1_inj_1} and \eqref{eq:lem_A_inj_0} vanish, so that the terms with $\epsilon_2$ and $\epsilon_3$ do not appear in \eqref{eq:lem_A_inj_1}. 
    We take the same values for $\epsilon_1$ and $\epsilon_4$. 
    Applying basically the same arguments as in the compressible case, we obtain the injectivity of $\mathcal{A}$.
    Among the conditions in \eqref{eq:assum_for_AInj}, only the first one needs to be retained. 
\end{remark}

\begin{theorem}[Well-posedness of problem \eqref{weakF}] \label{Th:ContiWellPosed}
Let the assumptions of Lemmas~\ref{lemma:bijective-Asub} and~\ref{lemma:InjecA} hold. 
Then, for every $\mathcal F \in \UU^*$, problem \eqref{weakF} has a unique solution $\uve \in \UU$. There exists a positive constant $C$ such that $\Vert\uve\Vert_{\UU}\leq C\Vert\mathcal{F}\Vert_{\UU^*}$.
\end{theorem}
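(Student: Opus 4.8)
The idea is to write $\mathcal A$ as a compact perturbation of a bijective operator and then invoke Fredholm's alternative together with the injectivity of $\mathcal A$ already established in Lemma~\ref{lemma:InjecA}. Using the block structure \eqref{eq:oper-decomp}, I would set
\begin{equation*}
	\mathcal A = \mathcal A_0 + \mathcal K, \qquad
	\mathcal A_0 := \begin{pmatrix}\mathcal A_{\mathrm{sub}} & 0\\ 0 & \mathcal A_3\end{pmatrix}, \qquad
	\mathcal K := \begin{pmatrix} 0 & \mathcal C_1\\ \mathcal C_2 & 0\end{pmatrix},
\end{equation*}
so that it suffices to verify that $\mathcal A_0$ is bijective and that $\mathcal K$ is compact.

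\textbf{Bijectivity of $\mathcal A_0$.} Since the hypotheses of the theorem include those of Lemma~\ref{lemma:bijective-Asub}, $\mathcal A_{\mathrm{sub}}$ is bijective, so only the bijectivity of $\mathcal A_3 : S\to S^*$ is new. For this I would show that $a_3(\cdot,\cdot)$ is coercive on $S$. The term $a_0 i(s,s)$ is purely imaginary, and a short computation gives $\re\big(i/(i\omega-\omega^2\tau)\big) = \omega/(\omega^2+\omega^4\tau^2) > 0$. Hence, using \eqref{eq:tensor_theta} and the Poincar\'e--Steklov inequality on $S$ (available because $|\Gamma_r|>0$),
\begin{equation*}
	\re\, a_3(s,s) = \frac{\omega}{\omega^2+\omega^4\tau^2}\,\nnormZero{\boldsymbol{\Theta}^{1/2}\nabla s}^2 \;\geq\; C\,\nnormOne{s}^2 \qquad \forall\, s\in S .
\end{equation*}
Together with the continuity of $a_3$, the Lax--Milgram lemma yields that $\mathcal A_3$ is bijective. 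Being block diagonal with bijective diagonal blocks, $\mathcal A_0$ is then bijective with bounded inverse.

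\textbf{Compactness of $\mathcal K$ and conclusion.} By Lemma~\ref{lemma:compact-C} the blocks $\mathcal C_1$ and $\mathcal C_2$ are compact, and an operator whose only nonzero block entries are compact is itself compact; hence $\mathcal K$ is compact. Identifying $\UU^*$ with $\UU$ through the Riesz isomorphism, $\mathcal A_0$ becomes an invertible operator on $\UU$ and $\mathcal K$ a compact operator on $\UU$, while $\ker \mathcal A = \{\zero\}$ by Lemma~\ref{lemma:InjecA}. Lemma~\ref{the-Fredholm-alternative} then shows that $\mathcal A = \mathcal A_0 + \mathcal K$ is bijective from $\UU$ onto $\UU^*$. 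Consequently \eqref{weakF} admits a unique solution $\uve\in\UU$ for every $\mathcal F\in\UU^*$; since $\mathcal A$ is bounded by Lemma~\ref{lem:Acont}, the bounded inverse theorem gives $\Vert\uve\Vert_{\UU}\le\Vert\mathcal A^{-1}\Vert\,\Vert\mathcal F\Vert_{\UU^*}$, which is the asserted estimate with $C=\Vert\mathcal A^{-1}\Vert$. The heavy lifting is done in the preceding lemmas; the only genuinely new steps here are the elementary coercivity of $a_3$ and the bookkeeping needed to assemble $\mathcal A_0$ and $\mathcal K$ and to apply Fredholm's alternative in the $\UU$--$\UU^*$ duality, so I do not expect a serious obstacle at this stage.
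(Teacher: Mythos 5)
Your proposal is correct and follows essentially the same route as the paper: the same block splitting of $\mathcal A$ into the block-diagonal bijective part (with $\mathcal A_3$ handled by coercivity of $\re a_3$ and Lax--Milgram) plus the compact off-diagonal perturbation, followed by Fredholm's alternative using the injectivity from Lemma~\ref{lemma:InjecA}. The only cosmetic difference is that the paper cites the Babu\v{s}ka--Lax--Milgram lemma for the final stability bound, whereas you invoke the bounded inverse theorem, which amounts to the same thing here.
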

\begin{proof}
Notice that 
\[
\left| a_3(T,T)\right| \ge
\left|\re a_3(T,T)\right|\geq \frac{\theta_{\min}}{\omega+\omega^3\tau^2}\nnormZero{\nabla T}^2.
\]
Thus, the operator $\mathcal{A}_3$ is bijective by the Lax--Milgram lemma for convex spaces, e.g., see Proposition~5.4 in \cite{SayasVariational}.
From \eqref{eq:oper-decomp}, we can rewrite the injective operator $\mathcal{A}$ (Lemma~\ref{lemma:InjecA}) as a bijective operator adding a compact one:
	\begin{equation*}%\label{eq:deco_well_posed}
		\mathcal{A} = 
		\begin{pmatrix}
			\mathcal{A}_{\mathrm{sub}} & \mathcal{C}_1  \\ \mathcal{C}_2 & \mathcal{A}_3
		\end{pmatrix}
		=
		\begin{pmatrix}
			\mathcal{A}_{\mathrm{sub}} &   \\  & \mathcal{A}_3
		\end{pmatrix}
		+
		\begin{pmatrix}
			& \mathcal{C}_1  \\ \mathcal{C}_2 & 
		\end{pmatrix}
		.\,
	\end{equation*}
The existence and uniqueness of a solution follow by applying Lemmas~\ref{lemma:bijective-Asub}, \ref{lemma:compact-C}, \ref{lemma:InjecA}, and the Fredholm alternative from Lemma~\ref{the-Fredholm-alternative}. The boundedness follows also from the Babu$\check{s}$ka--Lax--Milgram lemma.
\end{proof}

	\section{Analysis of the Discrete Problem} \label{sec:discrete-analysis}

\subsection{\texorpdfstring{$\boldsymbol{BR}$-$\boldsymbol{RT}_0$-$P_0$-$P_1$}{} finite element formulation}

Let $\{\Trih\}_{h>0}$ denote a shape regular family of triangulations of $\overline{\OO}$. 
The set of facets is denoted by $\mathcal{F}_h=\mathcal{F}_h^{o}\cup\mathcal{F}_h^{\partial}$, 
where $\mathcal{F}_h^{o}$ denotes the interior facets and $\mathcal{F}_h^{\partial}$ denotes the facets on the boundary.
In addition, $\mathcal{F}_h^{\Gamma_t}$ is the set of facets on the boundary $\Gamma_t$ 
and $\mathcal{F}_h^{o,t}=\mathcal{F}_h^{o}\cup\mathcal{F}_h^{\Gamma_t}$.
For a mesh cell $K\in\mathcal T_h$ we denote by $h_K$ its diameter and introduce
$h:=\max \{h_K:K\in \Trih\}$ as the characteristic mesh size.
For the interior facet $e\in\mathcal{F}_h^{o}$, where $e$ is the common facet of $K^{+}$ and $K^{-}$, $K^{\pm}\in\Trih$, 
let $\boldsymbol{n}_{e}=\boldsymbol{n}_{e, K^{+}}=-\boldsymbol{n}_{e, K^{-}}$ be the unit normal vector on $e$. 
The orientation of this vector can be chosen arbitrarily, but fixed. 
For the boundary facet $e\in\mathcal{F}_h^{\partial}$, we set $\boldsymbol{n}_{e}=\boldsymbol{n}_{e,K}$.

The finite element space for the displacement is the Bernardi--Raugel ($\boldsymbol{BR}$) element, which is developed for the Stokes model, e.g., see 
\cite{BRStokes}, Chapter~II.2.2 in \cite{Raviart0Finite}, or Remark~3.138 in \cite{Joh16}. 
Define the continuous piecewise linear polynomial space
\[
	\VV_{h,l}  :=\left\{\vv\in \VV: \vv|_K\in [P_1(K)]^d,  \forall\ K\in\Trih\right\},
\]
where $P_j(K)$, $j\geq 0$, denotes the space of polynomials of degree no more than $j$ on a mesh cell $K$. 
Further, with every facet $e\in\mathcal{F}_h^{o,t}$, we associate a vector-valued function $\boldsymbol{\Psi}_e$, 
where $\boldsymbol{\Psi}_e=\psi_e\boldsymbol{n}_e$ and for an interior facet $\psi_e|_{K^{\pm}}=\psi_{e,K^{\pm}}=\prod^{d+1}_{l=1,l\neq j^{\pm}}\lambda_{l,K^{\pm}}$, 
Here, $\lambda_{l,K^{\pm}}$, $l=1,\dots,d+1$, are barycentric coordinates on $K^{\pm}$ and $j^{\pm}$ is the vertex opposite to the 
facet $e$ in $K^{\pm}$.
For a boundary facet, we can perform a similar construction. 
Finally, the $\boldsymbol{BR}$ element can be written as the classical linear finite element space enriched by facet bubble functions:
\begin{equation*}
    \VV_h=\VV_{h,l} \oplus \VV_b, \quad \VV_b=\Span\left\{\boldsymbol{\Psi}_e\right\}_{e\in\mathcal{F}_h^{o,t}}.
\end{equation*}
Define the canonical interpolation operator $\Pi_h^v: \VV\cap[C(\Bar{\Omega})]^d\to\VV_h$, 
$\Pi_h^v\vv=\Pi_{h,l}^v\vv + \sum_{e\in\mathcal{F}_h^{o,t}}v_e\boldsymbol{\Psi}_e$, 
where $\Pi^v_{h,l}: \VV\cap[C(\Bar{\Omega})]^d\to\VV_{h,l}$ is the standard Lagrangian interpolation
and 
\[
v_e=\frac{\int_{e}(\boldsymbol{v}-\Pi_{h,l}\boldsymbol{v})\cdot\boldsymbol{n}_e\ \mathrm{d}\bs}{\int_{e}\prod^{d+1}_{l=1,l\neq j^{\pm}}\lambda_l\ \mathrm{d}\bs}.
\]
The interpolation $\Pi_h^v$, along with subsequent projections, operates separately on the real and imaginary parts.
Consequently, these operators inherit the properties established for real-valued functions.
The operator $\Pi_h^v$ satisfies the following properties \cite{BRStokes,Yi2017Study}:
\begin{eqnarray}
    \int_{\Omega}\nabla\cdot(\vv-\Pi_h^v\vv)\ \mathrm{d}\bx &=& 0, \nonumber \\ %\label{prop:Piv1}\\
	\Vert \vv-\Pi_h^v\vv\Vert_{s} &\leq& Ch_k^{m-s}\Vert\vv\Vert_{m}, \quad 0\leq s\leq 1, 1\leq m\leq 2. \label{prop:Piv2}
\end{eqnarray}

    The lowest-order Raviart--Thomas ($\boldsymbol{RT}_0$) finite element space \cite{boffi2013mixed} for the filtration displacement is denoted by
\[
    \ZZ_h:=\left\{\zz\in \ZZ:\zz|_K \in\left[P_{0}(K)\right]^{d} \oplus \vecb{x} P_{0}(K), \forall\ K\in\mathcal{T}_{h} \right\}.
\]
The space of piecewise constants ($P_0$) for the pore pressure reads
\[
    Q_{h}:=\left\{q\in Q:\left.q\right|_{K} \in P_{0}(K), \forall\ K\in\mathcal{T}_{h}\right\}.
\]
The Raviart--Thomas interpolation $\Pi_h^z:\ZZ\to\ZZ_h$ 
and the orthogonal $L^2$ projection $P_h:Q\to Q_h$ are defined by
\begin{eqnarray}
    ((\zz-\Pi_h^z\zz)\cdot\vecb{n}_e,1)_e&=&0, \quad\forall\ e\in\mathcal{F}_h, \nonumber \\% \label{prop:Piz1} \\
    (q-P_hq,r_h)&=&0, \quad\forall\ r_h\in Q_h, \label{prop:Ph1}
\end{eqnarray}
respectively. The following approximation and commutative properties can be found in \cite{raviart2006mixed,Yi2017Study}:
\begin{eqnarray}
   \Vert\zz-\Pi_h^z\zz\Vert_{0,K} &\leq& Ch_K\vert\zz\vert_{1,K}, \quad\forall\ K\in\mathcal{T}_h, \label{prop:Piz2}\\
   \Vert\ddiv(\zz-\Pi_h^z\zz)\Vert_{0,K} &\leq& Ch_K\vert\ddiv\zz\vert_{1,K}, \quad\forall\ K\in\mathcal{T}_h,\label{prop:Piz3}\\
   \Vert q-P_hq\Vert_{0,K} &\leq& C{h_K^s}\vert r\vert_{s,K}, \quad\forall\ K\in\mathcal{T}_h, \, s=0,1,\label{prop:Ph2}\\
   P_h\ddiv\Pi_h^v\vv &=& P_h\ddiv\vv, \quad\forall\ \vv\in\VV\cap [C(\overline{\Omega})]^d, \label{prop:comm1}\\
   \ddiv\Pi_h^z\zz &=& P_h\ddiv\zz, \quad \forall\ \zz\in\ZZ. \nonumber % \label{prop:comm2}
\end{eqnarray}

Finally, the piecewise linear polynomial space ($P_1$) for the temperature is defined as
\begin{align*}
	S_h := \left\{s\in S: s|_K\in P_1(K), \forall\ K\in\Trih \right\}.
\end{align*}
The Lagrangian (nodal) interpolation is denoted by $\Pi_h^s:\vecb{C}(\overline{\Omega})\to S_h$ and we have the standard approximation property, 
e.g., see Theorem~4.4.20 in \cite{brenner2008mathematical}, 
\begin{equation}
    \left\Vert s-\Pi_h^s s\right\Vert_K+h_K\left\vert s-\Pi_h^s s\right\vert_{1,K}\leq Ch_K^2\vert s\vert_{2,K}, \label{prop:Pis}
    \quad\forall\ K\in\mathcal{T}_h.
\end{equation}

As it will be motivated after having introduced the discrete problem, concretely in the proof of Theorem~\ref{Th:DiscWellPosed}, we 
define new sesquilinear forms for the finite element formulation
\begin{alignat}{4}
		&a_{h1}: \VV_h\times\VV_h\to\mathbb{C}, \ && a_{h1}(\uu_h,\vv_h) &&:= &&\ -\omega^2\rho\uprod{\uu_h}{\vv_h} 
		+ 2\mu\uprod{\varepsilon(\uu_h)}{\varepsilon(\vv_h)} \label{eq:a_h1_form} \\
		& && && &&  \   + \lambda\uprod{P_h\ddiv\uu_h}{P_h\ddiv\vv_h}, \nonumber \\
		&c_{h1}: S_h\times\VV_h\to\mathbb{C}, \ &&c_{h1}(T_h,\vv_h) &&:= &&\  -\beta\uprod{T_h}{P_h\ddiv\vv_h}. \nonumber %\label{eq:ch1_form}
\end{alignat}
In \eqref{weakF} we replace $a_1(\cdot,\cdot)$ and $c_1(\cdot,\cdot)$ by $a_{h1}(\cdot,\cdot)$ and $c_{h1}(\cdot,\cdot)$, respectively, to define the discrete operator $\mathcal{A}_h$. 

If $a_0$ and $b_0$ are very small, or even $a_0=b_0=0$, and $\theta_{\mathrm{min}}$ is (very) small, too, then 
the discrete problem is close to be of saddle point type. Since the finite element spaces for the solid displacement and temperature do
not satisfy a discrete inf-sup condition, there might be the onset of an instability, leading to temperature oscillations, compare the numerical studies in Section~\ref{sec:ana_sol}. To mitigate this issue, we introduce an additional stabilization term, in the spirit of  \cite{brezzi1984stabilization}, of
the form  $\mathcal{S}_h:\UU_h\rightarrow\UU_h^*$ as follows:
\begin{equation}\label{eq:stabilization}
    \langle \mathcal{S}_h(\uve_h),\vve_h \rangle := \delta\sum_{K\in\mathcal{T}_h}h_K^2(\nabla T_h, \nabla s_h)_K,
\end{equation}
where $\delta\geq 0$ is a stabilization parameter. In situations where this term is not needed, one 
can set $\delta=0$.

Define $\mathcal{A}_{h,\mathrm{stab}}:=\mathcal{A}_h + \mathcal{S}_h$.
Then, the discrete formulation is to find $\uve_h = (\uu_h,\ww_h,p_h,T_h)\in\UU_h$ such that
\begin{equation}\label{discF}
	\left\langle \mathcal{A}_{h,\mathrm{stab}}(\uve_h),\vve_h \right\rangle = \langle \mathcal{F},\vve_h \rangle \quad \forall\ \vve_h \in \UU_h.
\end{equation}

In \eqref{eq:a_h1_form}, we implement the technique of reduced integration \cite{Yi2017Study} 
to obtain the locking-free property with respect to $\lambda$. 
Specifically, this is due to the approximation property \eqref{prop:Ph2}, the commutativity \eqref{prop:comm1}, 
and the extra regularity of $\lambda\vert\ddiv\uu\vert$, which is discussed after the proof of Theorem~\ref{Th:errorEstimate} and assumed to be satisfied.
The same approach can be found in \cite{li2023parameter,Yi2017Study}.
Considering the construction of the $\boldsymbol{BR}$ element, this effect is limited to the edge bubble basis functions, 
since the divergence of the other (linear) basis functions is inherently piecewise constant. 
%To be able to prove the well-posedness of the discrete formulation, 
%we include the same operator $P_h$ in the sesquilinear form $c_{h1}(\cdot,\cdot)$ given in \eqref{eq:ch1_form}.
In order to ensure the well-posedness of the discrete formulation \eqref{discF}, 
the same operator $P_h$ is incorporated into the sesquilinear form $c_{h1}(\cdot,\cdot)$.

 \begin{remark}[On the choice of finite element spaces]
    %这里我们对于固体位移u,过滤位移w，空隙压力p，和温度distribution T分别采用了BR,RT0,P0和P1元。
    %同时，所有的空间都是conforming 的有限元空间，能够继承连续情形下的H1-compact embedding property,以及算子c1,c2的紧性。
    As $\lambda\rightarrow\infty$, the solid displacement enters a divergence-free state and the finite element space is overconstrained. 
    The research on divergence-free finite element methods  for incompressible flow problems is an active field of research.
    Among these, classical $H^1$-conforming elements, such as the Scott--Vogelius element, satisfy the discrete inf-sup condition only 
    for certain meshes and when the polynomial degree is sufficiently large.
    Here, we employ the $\boldsymbol{BR}$ element, include the $L^2$ projection $P_h$ in $a_{h1}(\cdot,\cdot)$ and $c_{h1}(\cdot,\cdot)$, 
    in order to avoid volumetric locking, and will prove the well-posedness following the same arguments as in the continuous case.
    Considering only the spaces of the solid displacement, the filtration displacement, and pore pressure, 
    we assert that the triple ($\VV_h$, $\ZZ_h$, $Q_h$) is Stokes--Biot stable \cite{Hu2018New}, which can avoid pressure oscillations.
\end{remark}

\subsection{Well-posedness}
The product space $\UU_h:=\VV_h\times \ZZ_h\times Q_h\times S_h$ is equipped with the norm
\begin{eqnarray}\label{norm:disc}
    \Vert (\vv_h, \zz_h, q_h, s_h) \Vert^2_{\UU_h} &:= 	&	
	2\mu\nnormZero{\varepsilon(\vv_h)}^2 + \lambda\nnormZero{P_h\ddiv\vv_h}^2 + \Vert\zz_h\Vert^2_{\ddiv} 
	+\nnormZero{q_h}^2 \nonumber \\
    && + \nnormOne{s_h}^2 + \delta\sum_{K\in\mathcal{T}_h}h_K^2\Vert\nabla s_h\Vert^2_{0,K}.
\end{eqnarray}
Notice that this norm can be extended to functions from $\UU$.
The continuity of $\mathcal{A}_{h,\mathrm{stab}}$ is proved along the lines of the proof of Lemma~\ref{lem:Acont}. There exists a positive constant $C$, depending on the physical and geometrical coefficients of the problem, such that 
\begin{equation*}%\label{eq:Ahcont}
	\Vert \mathcal{A}_{h,\mathrm{stab}}(\uve_h)\Vert_{\UU_h} \leq C\,\Vert \uve_h \Vert_{\UU_h}.
\end{equation*}

In case no ambiguity occurs, we use the same notations as in Section \ref{sec:continuous-analysis}.
Define a new inner product in $\VV_h$ as follows:
\begin{eqnarray*}
    (\uu_h,\vv_h)_{1,\mu,\Bar{\lambda}} := (\uu_h,\vv_h)_{1,\mu} + \lambda(P_h\ddiv\uu_h,P_h\ddiv\vv_h),
\end{eqnarray*}
and denote the associated norm by $\Vert\vv_h\Vert_{1,\mu,\Bar{\lambda}}$. 
There exists a family $\{\ppsi_n^h,\kappa_n^h\}_n \in \VV_h\times\RR^+$ such that $\ppsi_n^h\neq\mathbf{0}$ and 
\begin{equation*}%\label{eq:heigenvalues}
	\(\ppsi_n^h,\vv^h\)_{1,\mu,\Bar{\lambda}} = \kappa_n^h\(\ppsi_n^h,\vv^h\)_{0,\rho} \quad \forall\ \vv^h\in\VV_h.
\end{equation*}
This eigenvalue problem is also well posed due to Korn's inequality.
Now, similarly to the continuous case, we can fix the corresponding $\overline{m}$ 
and define the subspace $\VV^-_h := \Span_{0\leq n\leq \overline{m}} \{\ppsi^h_n\}$.
Define $\mathbb{T}_h:=\II_{\VV_h}-2\Pihm$ of $\mathcal{L}(\VV_h)$, where $\Pihm$ is the orthogonal projection on $\VV^-_h$.
For the sake of brevity, we omit the detailed analysis of the properties of $\mathbb{T}_h$. But analogous results can be proven for this operator like in the continuous case as stated in %Lemma~\ref{lemma:PropT} and 
\eqref{eq:prop_T}-\eqref{eq:t_coerc}. In particular, the sesquilinear form $a_{h1}(\cdot,\cdot)$ is $\mathbb{T}_h$-coercive.

\begin{theorem}[Well-posedness of \eqref{discF}] \label{Th:DiscWellPosed}
    Under the condition of Lemma \ref{lemma:InjecA}, the problem \eqref{discF} has a unique solution $\uve_h \in \UU_h$. There exists a positive constant $C$ such that $\Vert\uve_h\Vert_{\UU_h}\leq C\Vert\mathcal{F}\Vert_{\UU_h^*}$,
    and the above inequality is equivalent to the following inf-sup condition: there exists a positive constant $\beta_{1}$ independent of $h$ such that 
    \begin{equation} \label{eq:Ahinfsup}
        \inf_{\uve_h\in\UU_h \atop \uve_h\neq\Vec{\mathbf{0}}}\sup_{\vv_h\in\UU_h\atop \vve_h\neq\Vec{\mathbf{0}}}\frac{|\langle\mathcal{A}_{h,\mathrm{stab}}(\uve_h),\vve_h\rangle|}{\Vert\uve_h\Vert_{\UU_h}\Vert\vve_h\Vert_{\UU_h}} \geq \beta_{1}.
    \end{equation}
\end{theorem}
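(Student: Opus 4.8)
The plan is to mirror the continuous analysis of Section~\ref{sec:continuous-analysis}, exploiting two simplifications. First, since $\UU_h$ is finite-dimensional and $\mathcal{A}_{h,\mathrm{stab}}$ is a square operator, the Fredholm-type arguments of Lemmas~\ref{lemma:bijective-Asub}--\ref{lemma:InjecA} collapse: it suffices to produce one quantitative lower bound, which then delivers injectivity, surjectivity and the a priori estimate simultaneously, and the inf-sup condition \eqref{eq:Ahinfsup} is just a reformulation of that bound (for a fixed $h$ all three statements are equivalent; the substance of the theorem is the $h$-independence of $\beta_1$). Second, by construction $a_{h1}(\cdot,\cdot)$ and $c_{h1}(\cdot,\cdot)$ have exactly the same algebraic structure as $a_1(\cdot,\cdot)$, $c_1(\cdot,\cdot)$ with $\ddiv$ replaced by $P_h\ddiv$; moreover $(q_h,\ddiv\vv_h)=(q_h,P_h\ddiv\vv_h)$ for $q_h\in Q_h$, $\vv_h\in\VV_h$ and $\ddiv\zz_h\in Q_h$ for $\zz_h\in\ZZ_h$, so the mixed forms $b_2,b_3$ tested against discrete functions are of the same type. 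Consequently the entire chain of estimates in the proofs of Lemmas~\ref{lemma:bijective-Asub} and~\ref{lemma:InjecA} transcribes line by line under the dictionary $\ddiv\leftrightarrow P_h\ddiv$, $\|\cdot\|_{1,\mu,\lambda}\leftrightarrow\|\cdot\|_{1,\mu,\Bar{\lambda}}$, $\ppsi_n\leftrightarrow\ppsi_n^h$, $\kappa_n\leftrightarrow\kappa_n^h$, $\mathbb{T}\leftrightarrow\mathbb{T}_h$, and $\gamma_{\min}\leftrightarrow\gamma^h_{\min}:=\min_n\bigl|(\omega^2-\kappa^h_n)/(1+\kappa^h_n)\bigr|$.

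Concretely, I would first record the discrete counterparts of \eqref{eq:prop_T}--\eqref{eq:t_coerc} (already announced before the theorem): $\mathbb{T}_h^2=\II_{\VV_h}$, so $\mathbb{T}_h$ is bijective and a $(\cdot,\cdot)_{1,\mu,\Bar{\lambda}}$-isometry; the $\mathbb{T}_h$-coercivity $a_{h1}(\vv_h,\mathbb{T}_h\vv_h)\ge\gamma^h_{\min}\bigl(\|\vv_h\|_{1,\mu,\Bar{\lambda}}^2+\|\vv_h\|_{0,\rho}^2\bigr)$; and the discrete version of the crucial inequality \eqref{eq:T1_inj_2},
\[
\|P_h\ddiv(\uu_h-\mathbb{T}_h\uu_h)\|_0^2\le\tfrac1\lambda\|\uu_h-\mathbb{T}_h\uu_h\|_{1,\mu,\Bar{\lambda}}^2\le\tfrac4\lambda\|\uu_h\|_{1,\mu}^2+4\|P_h\ddiv\uu_h\|_0^2,
\]
which holds because the $\ppsi^h_n$ are $(\cdot,\cdot)_{1,\mu,\Bar{\lambda}}$-orthogonal. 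Next I would introduce the test operator $\mathsf{R}_h:\UU_h\to\UU_h$, $\mathsf{R}_h\uve_h=(\mathbb{T}_h\uu_h,\ww_h,-ip_h+\gamma\ddiv\ww_h,T_h)$, with a real parameter $\gamma>0$: it is bijective (block-triangular with bijective diagonal) and satisfies $\|\mathsf{R}_h\uve_h\|_{\UU_h}\le C\|\uve_h\|_{\UU_h}$ with $C$ independent of $h$ (the component $\gamma\ddiv\ww_h\in Q_h$ is controlled by $\|\ww_h\|_{\ddiv}$ and $\mathbb{T}_h$ is an isometry). This $\mathsf{R}_h$ merges the operator $\mathsf{R}_1$ of Lemma~\ref{lemma:bijective-Asub}, whose $\gamma\ddiv\ww$ component is what produces control of $\|\ddiv\ww_h\|_0^2$ (absent from the injectivity argument alone), with the operator $\mathsf{R}$ of Lemma~\ref{lemma:InjecA}, so that full coercivity in the $\UU_h$-norm is obtained in one shot. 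The core step is then to evaluate $\langle\mathcal{A}_{h,\mathrm{stab}}\uve_h,\mathsf{R}_h\uve_h\rangle$, add its real and imaginary parts so that the indefinite off-diagonal contributions from $b_1^*,b_2^*,b_3^*,c_1^*,c_2^*$ cancel or are absorbed exactly as in \eqref{eq:garding2} and \eqref{eq:T1_inj_0}--\eqref{eq:lem_A_inj_0}, bound the remaining mixed terms by Cauchy--Schwarz and Young with the very choices $\epsilon_1=(\omega k_{\max}^{-1}-\omega^2\rho_w)/(2\omega^2\rho_f)$, $\epsilon_2=8\alpha/(\gamma^h_{\min}\lambda)$, $\epsilon_3=8\beta/(\gamma^h_{\min}\lambda)$, $\epsilon_4=2b_0/c_0$ (plus the $\gamma,\epsilon_i$ of Step~1 of Lemma~\ref{lemma:bijective-Asub} for the $\gamma\ddiv\ww_h$ part), and use $\langle\mathcal{S}_h\uve_h,\mathsf{R}_h\uve_h\rangle=\delta\sum_{K}h_K^2\|\nabla T_h\|_{0,K}^2\ge0$ to pick up the $\delta$-part of the norm \eqref{norm:disc}. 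Invoking $\omega<\omega_{\mathrm{crit}}$ and the assumptions of Lemma~\ref{lemma:InjecA} (with $\gamma^h_{\min}$ in place of $\gamma_{\min}$) makes every resulting coefficient in front of $\|\uu_h\|_{1,\mu}^2$, $\|\uu_h\|_0^2$, $\|P_h\ddiv\uu_h\|_0^2$, $\|\ww_h\|_{\ddiv}^2$, $\|p_h\|_0^2$, $\|T_h\|_0^2$ and $\|\nabla T_h\|_0^2$ strictly positive, giving $|\langle\mathcal{A}_{h,\mathrm{stab}}\uve_h,\mathsf{R}_h\uve_h\rangle|\ge\beta_0\|\uve_h\|_{\UU_h}^2$. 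Dividing by $\|\mathsf{R}_h\uve_h\|_{\UU_h}$ yields \eqref{eq:Ahinfsup}; combined with the already established continuity of $\mathcal{A}_{h,\mathrm{stab}}$ and $\dim\UU_h<\infty$, this gives existence, uniqueness and $\|\uve_h\|_{\UU_h}\le\beta_1^{-1}\|\mathcal{F}\|_{\UU_h^*}$.

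The main obstacle is that the constants above, and especially $\gamma^h_{\min}$, must be bounded away from $0$ uniformly in $h$; equivalently, $\omega^2$ must stay uniformly away from the discrete elasticity spectrum $\{\kappa^h_n\}$. This is the discrete analogue of Assumption~\ref{assum:tcoer} and is \emph{not} implied by injectivity at a single $h$. It is resolved by standard spectral approximation theory for conforming Galerkin discretizations: only the finitely many eigenvalues $\kappa^h_0,\dots,\kappa^h_{\overline{m}}$ and the smallest one exceeding $\omega^2$ enter $\gamma^h_{\min}$, and these converge to $\kappa_0,\dots,\kappa_{\overline{m}}$ and $\kappa_{\overline{m}+1}$ as $h\to0$, so that $\overline{m}$ is indeed unchanged and $\gamma^h_{\min}\ge\gamma_{\min}/2$ for $h$ below a threshold $h_0$ (for coarser meshes one may simply assume this). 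Everything else is a routine, if lengthy, transcription of the proofs of Lemmas~\ref{lemma:bijective-Asub} and~\ref{lemma:InjecA}.
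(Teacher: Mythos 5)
Your proposal is correct in substance but follows a genuinely more direct route than the paper. The paper's own proof reuses the continuous architecture wholesale: it writes $\mathcal{A}_{h,\mathrm{stab}}$ as a block-diagonal part ($\mathcal{A}_{h,\mathrm{sub}}$ and $\mathcal{A}_3+\mathcal{S}_h$) plus the compact off-diagonal couplings $\mathcal{C}_{h,1},\mathcal{C}_{h,2}$, shows $\mathcal{A}_{h,\mathrm{sub}}$ is bijective by repeating Step~2 of Lemma~\ref{lemma:bijective-Asub} and invoking the equivalence of injectivity and bijectivity in finite dimensions, and then follows the injectivity argument of Lemma~\ref{lemma:InjecA}; the inf-sup condition is obtained only a posteriori, by citing the abstract equivalence of well-posedness and inf-sup. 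You instead build a single test operator $\mathsf{R}_h\uve_h=(\mathbb{T}_h\uu_h,\ww_h,-ip_h+\gamma\ddiv\ww_h,T_h)$ that merges the paper's $\mathsf{R}_1$ (whose $\gamma\ddiv\ww$ component supplies the $\nnormZero{\ddiv\ww_h}^2$ control missing from the pure injectivity argument) with its $\mathsf{R}$, and prove outright $\mathsf{T}$-coercivity of the full operator, which yields injectivity, the a priori bound, and \eqref{eq:Ahinfsup} with an explicit constant in one stroke, with no Fredholm decomposition needed. This is legitimate; the only bookkeeping you gloss over is that the merged Young's-inequality budget produces one cross term absent from both of the paper's separate arguments, namely $c_2(T_h,\gamma\ddiv\ww_h)=-b_0\gamma(T_h,\ddiv\ww_h)$, and that the coefficients of $\nnormZero{P_h\ddiv\uu_h}^2$, $\nnormZero{p_h}^2$, $\nnormZero{T_h}^2$ must now absorb the Step-1 and Step-2/Lemma-\ref{lemma:InjecA} contributions simultaneously, which may tighten the constants in \eqref{eq:assum_for_Asub}--\eqref{eq:assum_for_AInj} slightly; this is harmless but worth stating. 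Your most valuable addition is the explicit treatment of the $h$-uniformity of $\gamma^h_{\min}$, which is precisely where the claim ``$\beta_1$ independent of $h$'' lives and which the paper passes over in silence. One caveat there: because the discrete inner product $(\cdot,\cdot)_{1,\mu,\Bar{\lambda}}$ uses $P_h\ddiv$ rather than $\ddiv$, the discrete eigenproblem is not the plain Galerkin restriction of \eqref{eq:eigenvalues}, so ``standard conforming spectral approximation theory'' does not apply verbatim; one needs the perturbed-form (Babu\v{s}ka--Osborn with consistency error) version, whose conclusion $\kappa_n^h\to\kappa_n$ for each fixed $n\le\overline{m}+1$ still gives $\gamma^h_{\min}\ge\gamma_{\min}/2$ for $h$ small, as you claim.
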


\begin{proof}
    Using a decomposition of the form \eqref{eq:oper-decomp}, we express $\mathcal{A}_{h,\mathrm{stab}}$ as follows
    \begin{align*}
		\mathcal{A}_{h,\mathrm{stab}} = 
		\begin{pmatrix}
			\mathcal{A}_{h,\mathrm{sub}} &   \\  & \mathcal{A}_3+\mathcal{S}_h
		\end{pmatrix}
		+
		\begin{pmatrix}
			& \mathcal{C}_{h,1}  \\ \mathcal{C}_{h,2} & 
		\end{pmatrix}
		.\,
	\end{align*}
    Reviewing the proofs of Lemmas~\ref{lemma:bijective-Asub} (Step~2) and~\ref{lemma:InjecA}, one can observe that 
    utilizing the operator $\mathbb{T}$ as part of the first argument of the test function
    is the reason why the coupled system does not possess a symmetric/anti-symmetric structure. Consequently, 
    the sesquilinear forms $b_2(\cdot,\cdot)$ and $c_1(\cdot,\cdot)$ cannot be eliminated through their adjoint counterparts
    and they need to be estimated, compare \eqref{eq:T1_inj_1} and \eqref{eq:lem_A_inj_0}. 
    For $p_h\in Q_h$, it follows from \eqref{prop:Ph1} that $(p_h,\ddiv\vv_h)=(p_h,P_h\ddiv\vv_h)$.
    In order to apply this relation, we 
    introduce the $L^2$ projection $P_h$, which maps to $Q_h$, into the definition of $c_{h1}(\cdot,\cdot)$, 
    so that we can apply the same reasoning as for  \eqref{eq:lem_A_inj_0}.  The same motivation holds for 
    $b_{h2}(\cdot,\cdot)$.  
    The $L^2$ projection $P_h$ is a finite-rank operator, and hence compact. 
    Following Step~2 of the proof of Lemma~\ref{lemma:bijective-Asub} and noting that in finite dimensions 
    injection is equivalent to bijection, shows that $\mathcal{A}_{h,\mathrm{sub}}$ is a bijection. 
    
    Since the $\boldsymbol{BR}$ and $P_1$ elements are $H^1$-conforming, inheriting the compact embedding property from $H^1(\Omega)$ into $L^2(\Omega)$, the operators $\mathcal{C}_{h,1}$ and $\mathcal{C}_{h,2}$ are compact with similar arguments as in continuous case.
    
    Obviously, the combination $\mathcal{A}_3+\mathcal{S}_h$ is bijective.
    Thus, following the proof of Lemma~\ref{lemma:InjecA} gives the statement of the theorem. 
    For the inf-sup condition, we refer to Lemma~35.3 in \cite{ErnFinite} and Theorem~3 in \cite{Gatica2009New}.
\end{proof}
%proof
%证明过程中用到T算子，所以需要\lambda项控制div项。 受pressure-robust的投影的启发，我们在c_h中添加L^2投影。
%而且对于b2, b2(,P_h)=b_2(,)。所以离散格式的存在唯一性，可以参考连续形式的存在唯一性的证明。
%选取的有限元，使得c1(T,Phdiv v) \div 是有界算子，P_h是有限秩算子，以及c2是紧算子。所以C_1 和C_2依然是紧算子

Notice that the coefficient $\beta_1$ in the inf-sup condition \eqref{eq:Ahinfsup} is related to the wave number $\omega$.
That is to say, high frequencies may induce instabilities, leading to a pollution effect. This effect appears in the same way as in the Helmholtz and Maxwell equations \cite{Li2026hybrid}.
In our model, the wave number is theoretically constrained, see \eqref{remark:low frequency}. 
The numerical studies will show that the pollution effect can be reduced by mesh refinement, compare 
Figure~\ref{figErrorAboutWaveNumber} below.

\subsection{Convergence analysis}
We assume some regularity condition of the solution to establish a priori error estimates, concretely:
\begin{equation} \label{regularExactSolution}
    \uu\times\ww\times p\times T \in [H^2(\Omega)]^d\times H^1(\ddiv,\Omega)\times H^1(\Omega)\times H^2(\Omega).
\end{equation}
The dependence of the error bound on the coefficient $\lambda$ will be tracked explicitly, since this coefficient 
is connected to volumetric locking. 
\begin{theorem}[Convergence] \label{Th:errorEstimate}
    Let $\uve$ and $\uve_h$ be the solutions of \eqref{weakF} and \eqref{discF}, respectively. Assume the regularity \eqref{regularExactSolution} holds. Then, there exists a positive constant $C$ independent of $h$, such that
    \begin{equation} \label{eq:ErrorEstimate}
        \Vert \uve-\uve_h \Vert_{\UU_h} \leq Ch\(\vert\uu\vert_2 + \lambda\vert\nabla\cdot\uu\vert_1 + \vert \ww\vert_1 
        + \vert \ddiv\ww\vert_1 + \vert p\vert_1 + \| T\|_2\).
    \end{equation}
\end{theorem}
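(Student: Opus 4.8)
The standard route is Céa-type: use the discrete inf-sup condition \eqref{eq:Ahinfsup} from Theorem~\ref{Th:DiscWellPosed} together with consistency to reduce the error to a best-approximation error, then insert the interpolation/projection estimates \eqref{prop:Piv2}, \eqref{prop:Piz2}–\eqref{prop:Ph2}, and \eqref{prop:Pis}. Concretely, I would split $\uve-\uve_h = (\uve - \Ih\uve) + (\Ih\uve - \uve_h)$, where $\Ih\uve := (\Pi_h^v\uu, \Pi_h^z\ww, P_h p, \Pi_h^s T)$, and bound the first (interpolation) term directly by the approximation properties, obtaining the right-hand side of \eqref{eq:ErrorEstimate} with the factor $\lambda$ appearing only through the term $\lambda\|P_h\ddiv(\uu-\Pi_h^v\uu)\|_0$; here the commutativity \eqref{prop:comm1} gives $P_h\ddiv(\uu-\Pi_h^v\uu) = P_h\ddiv\uu - P_h\ddiv\uu = 0$ if one is careful, but more precisely $\lambda\|P_h\ddiv\uu - P_h\ddiv\vv_h\|$ is controlled using \eqref{prop:comm1} and the assumed extra regularity of $\lambda|\ddiv\uu|_1$, yielding the term $Ch\lambda|\ddiv\uu|_1$.

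The discrete error $\boldsymbol{e}_h := \Ih\uve - \uve_h \in \UU_h$ is estimated via the inf-sup bound: $\beta_1 \|\boldsymbol{e}_h\|_{\UU_h} \le \sup_{\vve_h} |\langle \mathcal{A}_{h,\mathrm{stab}}(\boldsymbol{e}_h), \vve_h\rangle| / \|\vve_h\|_{\UU_h}$. Using that $\uve_h$ solves \eqref{discF} and $\uve$ solves \eqref{weakF}, I would write $\langle \mathcal{A}_{h,\mathrm{stab}}(\boldsymbol{e}_h), \vve_h\rangle = \langle \mathcal{A}_{h,\mathrm{stab}}(\Ih\uve), \vve_h\rangle - \langle \mathcal{F}, \vve_h\rangle$ and then replace $\langle \mathcal{F}, \vve_h\rangle$ by the continuous form evaluated at $\vve_h$, so that the right-hand side becomes a sum of consistency terms $\langle \mathcal{A}_{h,\mathrm{stab}}(\Ih\uve), \vve_h\rangle - \langle \mathcal{A}(\uve), \vve_h\rangle$. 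These split into (i) the genuine approximation part $\langle \mathcal{A}_{h,\mathrm{stab}}(\Ih\uve - \uve), \vve_h\rangle$ (where $\mathcal{A}_{h,\mathrm{stab}}$ must be understood with its norm extended to $\UU$, as noted after \eqref{norm:disc}), bounded by continuity times $\|\Ih\uve-\uve\|_{\UU_h}$; (ii) a variational-crime term coming from the difference between $a_1, c_1$ and their discrete counterparts $a_{h1}, c_{h1}$ — namely terms like $\lambda(\ddiv\uu - P_h\ddiv\uu, P_h\ddiv\vv_h)$ and $\beta(T, \ddiv\vv_h - P_h\ddiv\vv_h)$ — and (iii) the stabilization term $\langle \mathcal{S}_h(\Ih\uve), \vve_h\rangle = \delta\sum_K h_K^2(\nabla \Pi_h^s T, \nabla s_h)_K$, which is $O(h)$ since $\sum_K h_K^2 \|\nabla \Pi_h^s T\|_{0,K}^2 \le C h^2 \|T\|_1^2$ and also contributes to the norm on the left. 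For (ii), the orthogonality \eqref{prop:Ph1} of $P_h$ is the key: $(\ddiv\uu - P_h\ddiv\uu, P_h\ddiv\vv_h) = 0$ since $P_h\ddiv\vv_h \in Q_h$, so this crime term vanishes exactly, and similarly $(T - P_h T \text{ paired against } P_h\ddiv\vv_h)$ arguments handle the $c_{h1}$ discrepancy; what remains is $\beta(P_h T - T, P_h \ddiv\vv_h)$ which is $O(h)$ by \eqref{prop:Ph2}. Collecting all pieces and dividing by $\beta_1$ gives the bound on $\|\boldsymbol{e}_h\|_{\UU_h}$, and the triangle inequality finishes the proof.

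The main obstacle I anticipate is \textbf{tracking the $\lambda$-dependence cleanly through the variational crime}. Because $a_{h1}$ uses $\lambda\|P_h\ddiv\cdot\|^2$ rather than $\lambda\|\ddiv\cdot\|^2$, one must be careful that the consistency error term $\lambda(\ddiv\uu - P_h\ddiv\uu, P_h\ddiv\vv_h)$ truly vanishes by \eqref{prop:Ph1} — it does, since $P_h\ddiv\vv_h\in Q_h$ and $P_h$ is the $L^2$-orthogonal projection — so the only place $\lambda$ survives is in the interpolation term $\lambda\|P_h\ddiv(\uu - \Pi_h^v\uu)\|_0 = \lambda\|P_h\ddiv\uu - \ddiv\Pi_h^v\uu\|$... which by \eqref{prop:comm1} equals $\lambda\|P_h\ddiv\uu - P_h\ddiv\uu\| = 0$; hence the genuine $\lambda$-term in \eqref{eq:ErrorEstimate} must actually come from bounding $\|\uu - \Pi_h^v\uu\|$ in the $a_{h1}$-pairing against $\lambda P_h\ddiv\vv_h$ in the consistency argument, i.e.\ from a term $\lambda(P_h\ddiv(\uu - \Pi_h^v\uu), P_h\ddiv\vv_h)$ — again zero — so one is left to trace $\lambda$ through the \emph{continuous} form $\lambda(\ddiv\uu, \ddiv\vv_h) - \lambda(P_h\ddiv\uu, P_h\ddiv\vv_h) = \lambda(\ddiv\uu - P_h\ddiv\uu, \ddiv\vv_h)$, and this is where $\ddiv\vv_h$ (not $P_h\ddiv\vv_h$) forces the estimate $\le \lambda \|\ddiv\uu - P_h\ddiv\uu\|_0 \|\ddiv\vv_h\|_0 \le Ch\lambda|\ddiv\uu|_1 \|\vv_h\|_1$, producing exactly the $Ch\lambda|\ddiv\uu|_1$ term. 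Getting this bookkeeping exactly right — and confirming that no uncontrolled power of $\lambda$ multiplies $h$ — is the delicate heart of the argument; everything else is a routine assembly of the cited interpolation estimates and the continuity/inf-sup bounds already established.
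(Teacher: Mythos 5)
Your proposal follows essentially the same route as the paper's proof: split the error into an interpolation part and a discrete part, bound the discrete part via the inf-sup condition \eqref{eq:Ahinfsup}, and reduce the consistency error $\langle\mathcal{A}_{h,\mathrm{stab}}(\uve)-\mathcal{A}(\uve),\vve_h\rangle$ to the variational-crime and stabilization terms, with the $\lambda$-dependence entering exactly as you identify in your final paragraph via $\lambda(\ddiv\uu-P_h\ddiv\uu,\ddiv\vv_h)\leq Ch\,\lambda\vert\ddiv\uu\vert_1\vert\vv_h\vert_1$. The only slips are cosmetic: the surviving $c_{h1}$ remainder should read $\beta(T-P_hT,\ddiv\vv_h)$ rather than $\beta(P_hT-T,P_h\ddiv\vv_h)$ (the latter vanishes by the orthogonality of $P_h$), and you omit the adjoint consistency term $i\beta(\ddiv\uu-P_h\ddiv\uu,s_h)$, which is bounded in the same way and changes nothing in the final estimate.
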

\begin{proof}
    Let us introduce the notations
    \begin{eqnarray*}
        \vec{\boldsymbol e}_{\uve} &:=& \uve-\uve_h=\(\uu-\uu_h,\ww-\ww_h,p-p_h,T-T_h\), \\
        \vec{\boldsymbol\eta}_{\uve} &:=& \uve-\Ih\uve=\(\uu-\Pi^v_h\uu,\ww-\Pi_h^z\ww,p-P_h p,T-\Pi_h^s T\), \\
        \vec{\boldsymbol\xi}_{\uve} &:=& \Ih\uve-\uve_h=\(\uu_h-\Pi_h^v\uu,\ww_h-\Pi_h^z\ww,p_h-P_h p,T_h-\Pi_h^s T\).
    \end{eqnarray*}
    Thus, the error is decomposed as usual in an interpolation error and a discrete part, 
    $\vec{\boldsymbol e}_{\uve}=\vec{\boldsymbol\eta}_{\uve}-\vec{\boldsymbol\xi}_{\uve}$. 
    
    From the approximation and commutative properties \eqref{prop:Piv2}, \eqref{prop:Piz2}-\eqref{prop:comm1}, and  \eqref{prop:Pis}, one gets
    \begin{eqnarray}
        \Vert\vec{\boldsymbol\eta}_{\uve}\Vert_{\UU_h} &\leq& (2\mu)^{\nicefrac12}\nnormZero{\varepsilon(\uu-\Pi_h^v\uu)} + \lambda^{\nicefrac12}\nnormZero{P_h\ddiv(\uu-\Pi_h^v\uu)} + \Vert\ww-\Pi_h^z\ww\Vert_{\ddiv} \nonumber \\
        && + \nnormZero{p-P_h p} + \nnormOne{T-\Pi_h^s T} +  \(\delta\sum_{K\in\mathcal{T}_h} h_K\Vert\nabla (T-\Pi_h^s T)\Vert^2_{0,K}\)^{\nicefrac12} \label{eq:etaAppro}\\
        &\leq& Ch ( \vert\uu\vert_2 + \vert\ww\vert_1 + \vert\ddiv\ww\vert_1 + \vert p\vert_1 + \vert T\vert_2 ). \nonumber
    \end{eqnarray}
    
    Using the inf-sup condition \eqref{eq:Ahinfsup}, the linearity of $\mathcal{A}_{h,\mathrm{stab}}(\cdot)$, and the triangle inequality, we obtain
    \begin{eqnarray}\label{eq:err1}
        \beta_1\Vert \vec{\boldsymbol\xi}_{\uve} \Vert_{\UU_h} 
        &\leq& \sup_{\vve_h\in\UU_h \atop \vve_h\neq\bzero}\frac{|\langle\mathcal{A}_{h,\mathrm{stab}}(\vec{\boldsymbol\xi}_{\uve}),\vve_h\rangle|}{\Vert\vve_h\Vert_{\UU_h}}\nonumber\\
        &\leq& \sup_{\vve_h\in\UU_h \atop \vve_h\neq\bzero}\frac{|\langle\mathcal{A}_{h,\mathrm{stab}}(\vec{\boldsymbol\eta}_{\uve}),\vve_h\rangle| + |\langle\mathcal{A}_{h,\mathrm{stab}}(\vec{\boldsymbol e}_{\uve}),\vve_h\rangle|}{\Vert\vve_h\Vert_{\UU_h}}.
    \end{eqnarray}
    A straightforward calculation, using \eqref{weakF} and the definition of the sesquilinear forms, gives   
    \begin{eqnarray*}
        \lefteqn{|\langle\mathcal{A}_{h,\mathrm{stab}}(\vec{\boldsymbol e}_{\uve}),\vve_h\rangle|}\\
        &=&|\langle\mathcal{A}_{h,\mathrm{stab}}(\uve),\vve_h\rangle-\langle\mathcal{A}_{h,\mathrm{stab}}(\uve_h),\vve_h\rangle|
         =|\langle\mathcal{A}_{h,\mathrm{stab}}(\uve),\vve_h\rangle-\langle\mathcal{A}(\uve),\vve_h\rangle| \\
        &\leq& |\lambda(\ddiv\uu,\ddiv\vv_h)-\lambda(P_h\ddiv\uu,P_h\ddiv\vv_h)| + |\beta(T,\ddiv\vv_h-P_h\ddiv\vv_h)| \\
        &&+ |i\beta(\ddiv\uu-P_h\ddiv\uu,s_h)| + \delta
        \left| \sum_{K\in\mathcal{T}_h} h_K^2(\nabla T, \nabla s_h)_K\right| =: \sum_{i=1}^4\Psi_i.
    \end{eqnarray*}
    A combination of \eqref{prop:Ph1} and \eqref{prop:Ph2}, in conjunction with the Cauchy--Schwarz inequality, yields 
    \begin{eqnarray*}
        \Psi_1 & = & |\lambda(\ddiv\uu-P_h\ddiv\uu,\ddiv\vv_h) + \lambda(P_h\ddiv\uu,\ddiv\vv_h-P_h\ddiv\vv_h)| \\
        &=& |\lambda(\ddiv\uu-P_h\ddiv\uu,\ddiv\vv_h)| \leq Ch \lambda\vert\ddiv\uu\vert_1 \vert\vv_h\vert_1.
    \end{eqnarray*}
    Due to the fact that $S \subset Q$, $P_hT$ is well defined, so that applying  \eqref{prop:Ph1} gives 
    \[
        \Psi_2 = |\beta(T-P_hT,\ddiv\vv_h-P_h\ddiv\vv_h)|
        = |\beta(T-P_hT,\ddiv\vv_h)| 
        \leq Ch \vert T\vert_1 \vert\vv_h\vert_1.
    \]
    With the Cauchy--Schwarz inequality and \eqref{prop:Ph2}, one obtains 
    \[
     |i\beta(\ddiv\uu-P_h\ddiv\uu,s_h)|  \le \beta \|\ddiv\uu-P_h\ddiv\uu\|_0 \|s_h\|_0 \le C h   \vert\ddiv\uu\vert_1 \|s_h\|_0 .
    \]
    Finally, the Cauchy--Schwarz inequality leads to 
    \begin{eqnarray*}
        \Psi_4 &\leq& 
        \left(\delta\sum_{K\in\mathcal{T}_h} h_K^2 \Vert\nabla T\Vert_{0,K}^2\right)^{\nicefrac{1}{2}}
        \left(\delta\sum_{K\in\mathcal{T}_h} h_K^2 \Vert\nabla s_h\Vert_{0,K}^2\right)^{\nicefrac{1}{2}} \\
        &\leq& Ch \vert T\vert_1 \left(\delta\sum_{K\in\mathcal{T}_h} h_K^2 \Vert\nabla s_h\Vert_{0,K}^2\right)^{\nicefrac{1}{2}}.
    \end{eqnarray*}
From the discrete norm \eqref{norm:disc} and Korn's inequality, we have
    \begin{equation*}
        |\langle\mathcal{A}_{h,\mathrm{stab}}(\vec{\boldsymbol e}_{\uve}),\vve_h\rangle| 
        \leq Ch \(\lambda\vert\ddiv\uu\vert_1 + \vert T\vert_1 \)\Vert\vve_h\Vert_{\UU_h}.
    \end{equation*}
    Inserting this estimate in \eqref{eq:err1}, combining with the interpolation estimate \eqref{eq:etaAppro}, implies \eqref{eq:ErrorEstimate}.
\end{proof}
From the stability of the model problem in the time domain \cite[Theorem 6.1]{JCPBonettiNumerical}, 
we might assume that $\lambda\vert\ddiv\uu\vert_1$ is bounded by the quantities $\tilde{\ff}$, $\tilde{\gf}$, and $\tilde{H}$, thereby obtaining locking-free property with respect to $\lambda$.

	\section{Numerical Studies} \label{sec:numericalExperiments}

This section presents numerical studies to demonstrate the effectiveness and robustness, with respect to large values of $\lambda$,
of the proposed finite element formulation in Section~\ref{sec:discrete-analysis}.  
We divide it into two subsections: the first one verifies the orders of convergence and parameter robustness with an analytic solution, 
while the second one presents results for two benchmark problems.
Note that the degrees of freedom in the frequency-domain problem double, as both the real and imaginary parts of the variables must be solved simultaneously.

\subsection{Convergence and parameter robustness studies} \label{sec:ana_sol}
Let $\Omega=(0,1)^2$. 
A sequence of uniform triangular grids $\{\mathcal{T}_h\}$ is used, where the coarsest grid is obtained by dividing the unit
square with a diagonal from $(0,1)$ to $(1,0)$. 
The spatial mesh size $h$ is defined by dividing the diameter of the mesh cells by $\sqrt{2}$, ranging from $1/8$ to $1/128$. 
The prescribed solution is given by
\begin{flalign}
    \label{Ex:exactSolution}
    \renewcommand{\arraystretch}{1.2}
    \left.
    \begin{array}{l}
        \re u_1=\sin(2\pi y)(-1+\cos(2\pi x))+\frac{1}{\mu+\lambda}\sin(\pi x)\sin(\pi y), \\
        \re u_2=\sin(2\pi x)(1-\cos(2\pi y))+\frac{1}{\mu+\lambda}\sin(\pi x)\sin(\pi y), \\
        \im u_1=\sin(\pi x)\cos(\pi y)+\frac{x^2}{2\lambda}, \\
        \im u_2=-\cos(\pi x)\sin(\pi y)+\frac{y^2}{2\lambda}, \\
        \re w_1=\sin(\pi x)\sin(\pi y), \quad \im w_1=e^y x, \\
        \re w_2=\sin(\pi x)\sin(\pi y), \quad \im w_2=e^x y, \\
        \re p=\sin(\pi x)\sin(\pi y),   \quad \im p=x(1-x)y(1-y), \\
        \re T=x(1-x)y(1-y),         \quad \im T=\sin(\pi x)\sin(\pi y).
    \end{array}\right\}
\end{flalign}
The solid displacement is taken to satisfy $\nabla\cdot\re\uu\rightarrow 0$ and $\nabla\cdot\im\uu\rightarrow 0$ when the Lam\'{e} constant $\lambda\rightarrow\infty$, in order to verify the locking-free property.
The right-hand side terms are determined by inserting the prescribed solution in \eqref{eq:TherPoro-fft}.
We impose pure Dirichlet boundary conditions for all variables directly from \eqref{Ex:exactSolution},  
that is, $\Gamma_d=\Gamma_f=\Gamma_r=\partial\Omega$.

We measure the error of the solid displacement in the $H^1$-seminorm, the error of the filtration displacement in the $H(\ddiv)$-norm, 
the error of the pressure in the  $L^2$-norm, and the error of the temperature in the  $H^1$-seminorm to verify the numerical analysis.
If not mentioned otherwise, 
the coefficients appearing in \eqref{eq:TherPoro-fft}, taken from \cite{JCPBonettiNumerical}, are taken as given in Table~\ref{Tab:modelCoefficients}
and the stabilization parameter appearing in \eqref{eq:stabilization} is set to $\delta=0.1$.

\begin{table}[t!]
\centering
\caption{Problem parameters for the examples from Section~\ref{sec:numericalExperiments}. In the numerical simulations, we used the given values 
of the parameters as their dimensionless counterpart.}
\label{Tab:valueModelCoefficients}
\begin{tabular}{llllll}
\toprule  
Coefficient                &Value     &Coefficient                    &Value      
&Coefficient                     &Value    \\ 
\midrule  
$a_0\ [\unitfrac{Pa}{K^2}]$  &0.2      &$\alpha$\ [-]                                 &1     
&$\boldsymbol{\Theta}\ [\unitfrac{m^2\,Pa}{(K^2\,s)}]$     &$\boldsymbol{I}$              \\
$b_0\ [\unitfrac1{K}]$       &0.1      &$\beta\ [\unitfrac{Pa}{K}]$                   &0.8      
&$\phi$\ [-]                                              &0.5            \\
$c_0\ [\unitfrac1{Pa}]$      &0.2      &$\rho_s\ [\unitfrac{kg}{m^3}]$                &0.03        
&$a$\ [-]                                                 &1              \\
$\lambda\ [\unit{Pa}]$      &1        &$\rho_f\ [\unitfrac{kg}{m^3}]$                &0.03 
&$\tau$\ [\unit{s}]                                       &$1.5\cdot 10^{-2}$ \\
$\mu$\ [\unit{Pa}]           &1        &$\boldsymbol{K}\ [\unitfrac{m^2}{(Pa\,s)}]$   &$\boldsymbol{I}$                      
&$\omega\ [\unitfrac{rad}{s}]$  &1\\
\bottomrule 
\end{tabular}
\end{table}

First, the case of a large Lam\'{e} constant is considered. To this end, $\lambda=10^6$ was chosen. 
As shown in Table~\ref{Tab:lambda-1e6}, our finite element scheme \eqref{discF} is locking-free with respect to 
$\lambda$ while maintaining optimal convergence rates.

\begin{table}[t!]
    \small
    \centering
    \caption{Analytic solution. Numerical results for $\lambda=10^6$ and all other parameter values as listed in Table~\ref{Tab:valueModelCoefficients}.}  \label{Tab:lambda-1e6}.
    \setlength{\tabcolsep}{1mm}\begin{tabular}{@{}ccccccccc@{}}
    \toprule
    $h$ & $\|\nabla (\uu-\uu_h)\|_0$     & Rate  & $\|\ww-\ww_h\|_{\ddiv}$   & Rate    & $\|p-p_h\|_0$ & Rate   & $\|\nabla(T-T_h)\|_0$   & Rate \\ 
    \midrule
        1/8      &1.348e+0      &--     &2.595e-1              &--       &6.536e-2    &--      &4.329e-1              &--  \\
        1/16     &6.629e-1      &1.02   &1.304e-1              &0.99     &3.277e-2    &1.00    &2.181e-1              &0.99  \\
        1/32     &3.300e-1      &1.01   &6.529e-2              &1.00     &1.640e-2    &1.00    &1.092e-1              &1.00  \\
        1/64     &1.648e-1      &1.00   &3.265e-2              &1.00     &8.199e-3    &1.00    &5.465e-2              &1.00  \\
        1/128    &8.238e-2      &1.00   &1.633e-2              &1.00     &4.100e-3    &1.00    &2.733e-2              &1.00 \\
    \bottomrule
    \end{tabular}
\end{table}

We further investigate the sensitivity to parameters related to thermal and storage effects.
Table~\ref{Tab:a0b0c0} presents results for the degenerate coefficients $a_0=b_0=c_0=0$ and one can see that 
the scheme preserves the optimal convergence behavior, demonstrating the robustness with respect to singular parameter values
for $a_0, b_0, c_0$.

\begin{table}[t!]
    \small
    \centering
    \caption{Analytic solution. Numerical results for $a_0=b_0=c_0=0$ and  all other parameter values as listed in Table~\ref{Tab:valueModelCoefficients}.}  \label{Tab:a0b0c0}
    \setlength{\tabcolsep}{1mm}\begin{tabular}{@{}ccccccccc@{}}
    \toprule
    $h$ & $\|\nabla (\uu-\uu_h)\|_0$     & Rate  & $\|\ww-\ww_h\|_{\ddiv}$   & Rate    & $\|p-p_h\|_0$ & Rate   & $\|\nabla(T-T_h)\|_0$   & Rate \\ 
    \midrule
        1/8      &1.339e+0      &--     &2.910e-1              &--       &6.536e-2    &--      &4.329e-1              &--    \\
        1/16     &6.578e-1      &1.03   &1.479e-1              &0.98     &3.277e-2    &1.00    &2.181e-1              &0.99  \\
        1/32     &3.273e-1      &1.01   &7.431e-2              &0.99     &1.640e-2    &1.00    &1.092e-1              &1.00  \\
        1/64     &1.635e-1      &1.00   &3.720e-2              &1.00     &8.199e-3    &1.00    &5.465e-2              &1.00  \\
        1/128    &8.171e-2      &1.00   &1.861e-2              &1.00     &4.100e-3    &1.00    &2.733e-2              &1.00  \\
    \bottomrule
    \end{tabular}
\end{table}

Next, different wave numbers $\omega\in \{ 1, 5, 10, \dots, 50\}$ are studied. Table~\ref{Tab:omega} 
presents exemplarily results for $\omega=25$, which is larger than the critical value \eqref{remark:low frequency}. 
Each variable achieves the optimal convergence order.
Comparing with Tables~\ref{Tab:lambda-1e6} and \ref{Tab:a0b0c0}, it can be seen that the errors for all variables are usually
of the same magnitude. Only the pressure on very coarse grids is more inaccurate in Table~\ref{Tab:omega}.
Figure~\ref{figErrorAboutWaveNumber} illustrates the behavior of the errors with respect to the mesh size $h$ and the wave number $\omega$, to 
provide more details. It can be observed that for $h=1/8$  the errors increase progressively with the wave number.
In contrast, the errors are almost constant for the considered set of wave numbers when $h=1/64$.

\begin{table}[t!]
    \small
    \centering
    \caption{Analytic solution.  Numerical results for $\omega=25$ and all other parameter values as listed in Table~\ref{Tab:valueModelCoefficients}.}  \label{Tab:omega}
    \setlength{\tabcolsep}{1mm}\begin{tabular}{@{}ccccccccc@{}}
    \toprule
    $h$ & $\|\nabla (\uu-\uu_h)\|_0$     & Rate  & $\|\ww-\ww_h\|_{\ddiv}$   & Rate    & $\|p-p_h\|_0$ & Rate   & $\|\nabla(T-T_h)\|_0$   & Rate \\ 
    \midrule
        1/8      &1.351e+0      &--     &3.180e-1              &--        &1.975e-1     &--      &4.733e-1              &--    \\
        1/16     &6.593e-1      &1.03   &1.514e-1              &1.07      &5.680e-2     &1.80    &2.238e-1              &1.08  \\
        1/32     &3.275e-1      &1.01   &7.475e-2              &1.02      &2.008e-2     &1.50    &1.100e-1              &1.02  \\
        1/64     &1.635e-1      &1.00   &3.726e-2              &1.00      &8.696e-3     &1.21    &5.474e-2              &1.01  \\
        1/128    &8.171e-2      &1.00   &1.861e-2              &1.00      &4.163e-3     &1.06    &2.734e-2              &1.00  \\
    \bottomrule
    \end{tabular}
\end{table}

\begin{figure}[t!]
      \centering
      \includegraphics[width=0.8\textwidth]{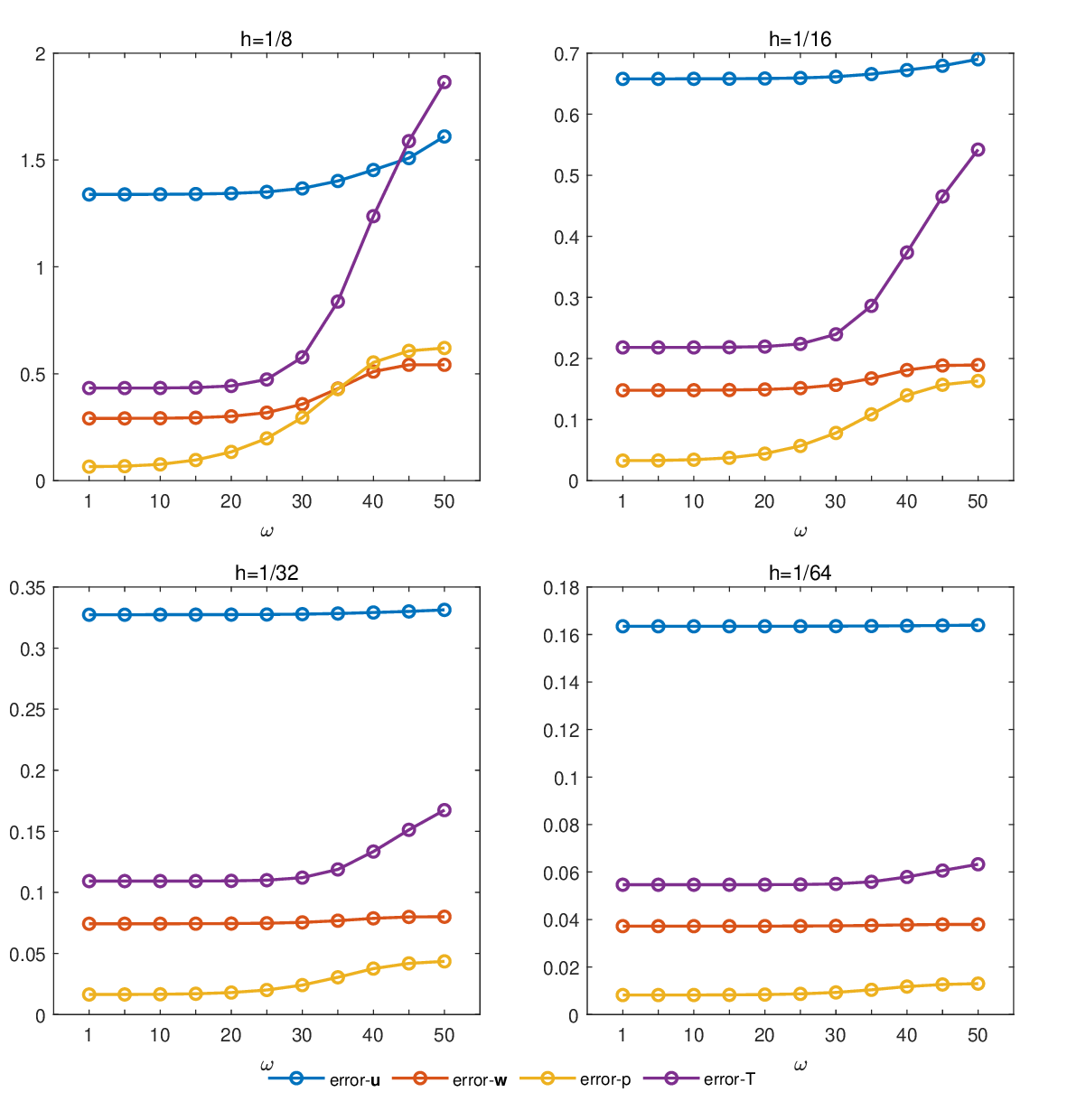}
      \caption{Analytic solution.  Numerical errors for different values of $\omega$ and different mesh sizes $h$.}
      \label{figErrorAboutWaveNumber}
\end{figure}

We finally examine the effect of the temperature stabilization. 
It is shown in Table \ref{Tab:delta0} that, for general values of parameters as in Table \ref{Tab:valueModelCoefficients}, the temperature stabilization is not needed. 
Then, we test the sensitivity of the thermal conductivity to the stabilization, especially when the thermal conductivity is low.
The Lam\'{e} constants are taken as 
\begin{equation}\label{eq:lameMu}
    \lambda=\frac{E\nu}{(1+\nu)(1-2\nu)},\quad \mu=\frac{E}{2(1+\nu)},
\end{equation}
where the elasticity modulus $E=10\ \unit{Pa}$, and Poisson's ratio $\nu=0.499$\ [-], that is, the nearly incompressible case.
Consider $a_0=b_0=c_0=0$, $\boldsymbol{K}=k\boldsymbol{I}$ and $k=10^{-4}$, $\tau=0$. 
We set $\boldsymbol{\Theta}=\theta\boldsymbol{I}$ and $\theta \in \{ 10^{-7}, 10^{-6},\dots,1 \}$, $\delta\in \{ 0, 0.001, 0.01, 0.1, 1 \}$.
As shown in Figure~\ref{figEffectOfDelta}, the error of the temperature decreases gradually with increasing $\theta$.
Setting $\delta=0$ and $\theta\ll 1$ leads to a blow-up of the errors. Furthermore, a small value of $\delta$ is sufficient to address the inaccuracies.

\begin{table}[t!]
    \small
    \centering
    \caption{Analytic solution. Numerical results for $\delta=0$ and  all other parameter values as listed in Table~\ref{Tab:valueModelCoefficients}.}  \label{Tab:delta0}
    \setlength{\tabcolsep}{1mm}\begin{tabular}{@{}ccccccccc@{}}
    \toprule
    $h$ & $\|\nabla (\uu-\uu_h)\|_0$     & Rate  & $\|\ww-\ww_h\|_{\ddiv}$   & Rate    & $\|p-p_h\|_0$ & Rate   & $\|\nabla(T-T_h)\|_0$   & Rate \\ 
    \midrule
        1/8      &1.339e+0      &--     &2.910e-1              &--       &6.536e-2    &--      &4.329e-1              &--    \\
        1/16     &6.578e-1      &1.03   &1.479e-1              &0.98     &3.277e-2    &1.00    &2.181e-1              &0.99  \\
        1/32     &3.273e-1      &1.01   &7.431e-2              &0.99     &1.640e-2    &1.00    &1.092e-1              &1.00  \\
        1/64     &1.635e-1      &1.00   &3.720e-2              &1.00     &8.199e-3    &1.00    &5.465e-2              &1.00  \\
        1/128    &8.171e-2      &1.00   &1.861e-2              &1.00     &4.100e-3    &1.00    &2.733e-2              &1.00  \\
    \bottomrule
    \end{tabular}
\end{table}

\begin{figure}[t!]
      \centerline{
      \includegraphics[width=1.0\textwidth]{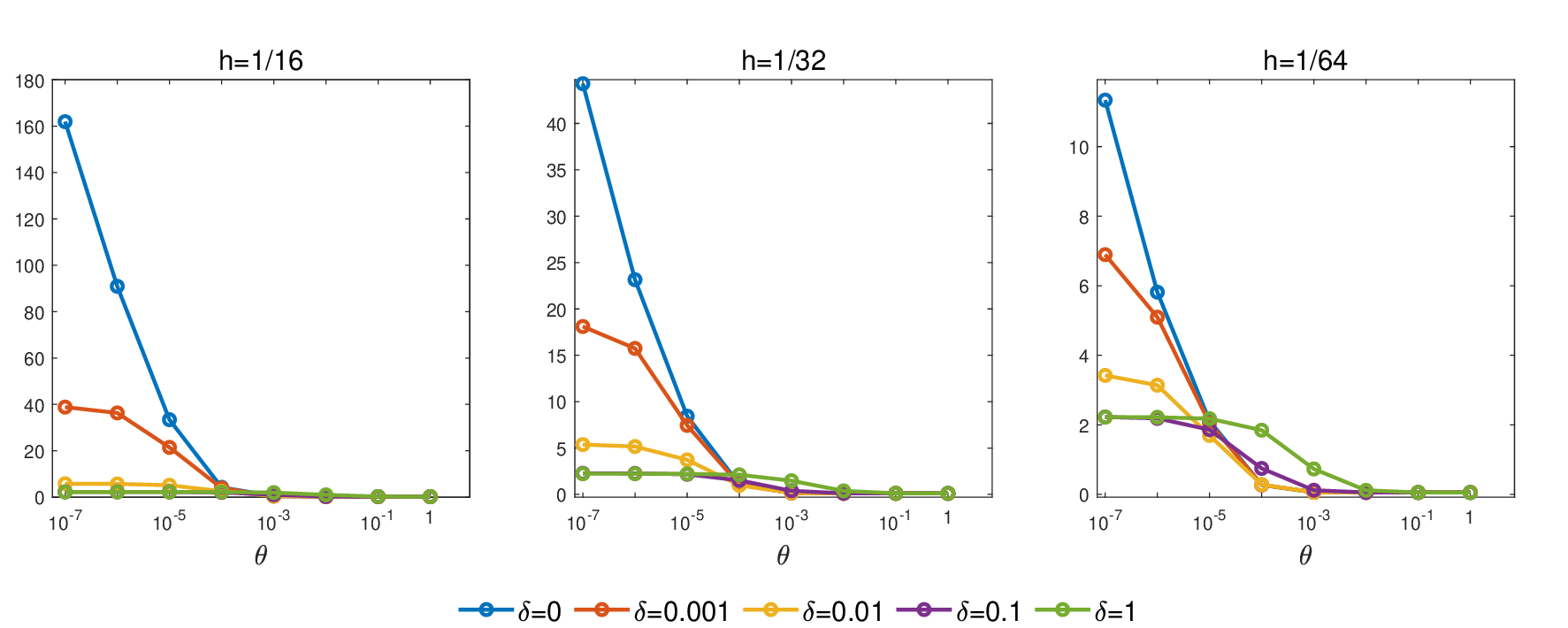}}
      \caption{Analytic solution. Numerical error $\|\nabla(T-T_h)\|_0$ for different values of $\theta$, different mesh sizes $h$, and different values of $\delta$.}
      \label{figEffectOfDelta}
\end{figure}

\subsection{Benchmark problems}
%3. C-B问题，来验证避免压力振荡。改变惩罚系数，慢慢得到好的结果
%Layered-domain问题，按照Cristian论文中的格式来。复现
%对于温度，边界条件

This section considers two benchmark problems, from  \cite{Cristian2024,Yi2017Study}, to demonstrate the elimination of pressure oscillations.

The 2D cantilever bracket problem is shown in Figure~\ref{CBLD} (left). 
The domain is the unit square $\Omega=(0,1)^2$, adiabatic and no-flow boundary conditions are imposed along all sides, that is $\Gamma_p=\Gamma_h=\partial\Omega$.
For the elasticity problem, the left-hand side is clamped, a downward traction $t_N=(0,-1)^T$ 
is applied on the top side, and the right-hand and bottom sides are traction-free.
The material parameters are taken as
given in Table~\ref{Tab:modelCoefficients}, except
$a_0=b_0=c_0=0$, $\delta=0.1$, $\boldsymbol{K}=10^{-7}\boldsymbol{I}$, and $\boldsymbol{\Theta}=10^{-4}\boldsymbol{I}$.
The real and imaginary pressure distributions are depicted in Figure~\ref{CantileverBracket}.
It can be observed that strong tensile and compressive pressures exist at the top and bottom corners of the left boundary.
Our discrete scheme maintains monotonicity without nonphysical pressure oscillations.

\begin{figure}[t!]
   	\centering
   	\includegraphics[width=0.9\textwidth]{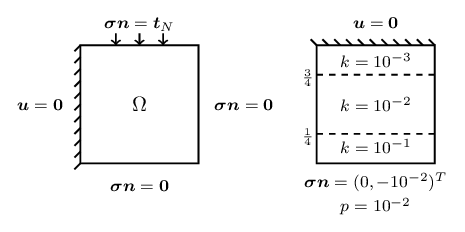}
      \caption{Cantilever bracket problem (left) and layered domain problem (right). }
      \label{CBLD}
\end{figure}
\begin{figure}[t!]
      \centering
      \includegraphics[width=0.9\textwidth]{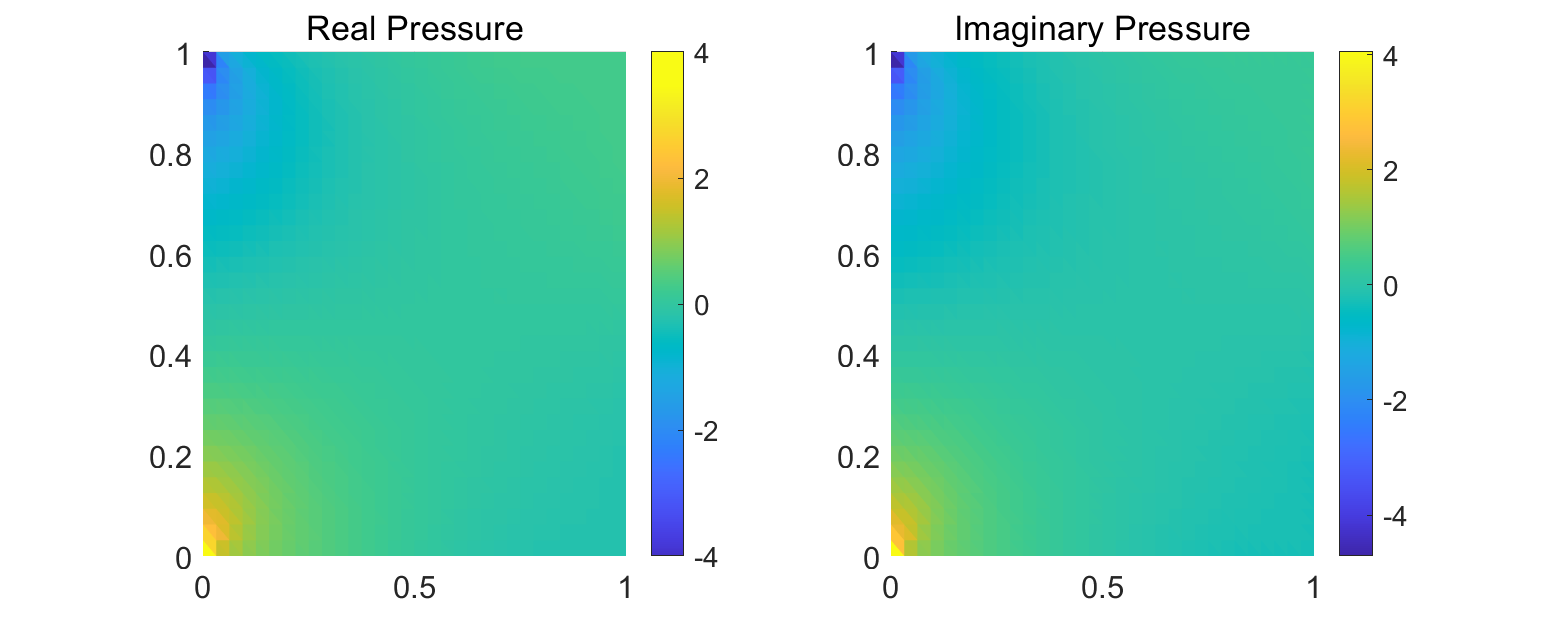}
      \caption{Cantilever bracket problem. Numerical approximations for the real and imaginary part of the pressure. }
      \label{CantileverBracket}
\end{figure}

The second benchmark problem is the layered domain problem as shown in Figure~\ref{CBLD} (right).
We set $\Omega=(0,1)^2$ and decompose it into three subdomains, in which the hydraulic conductivity is set to be 
$\boldsymbol{K}=10^{-1}\boldsymbol{I}$ for $y\in [0,1/4]$, $\boldsymbol{K}=10^{-2}\boldsymbol{I}$ for $y\in [1/4,3/4]$, and $\boldsymbol{K}=10^{-3}\boldsymbol{I}$ for $y\in [0,1/4]$.
Taking $\delta=0.1$, $E=100\ \unit{Pa}$, and $\nu=0.45$, the Lam\'{e} constants $\lambda$ and $\mu$ are obtained by \eqref{eq:lameMu}. 
All other parameters are chosen as in Table~\ref{Tab:valueModelCoefficients}.
We enforce an upward traction on the bottom side of magnitude $10^{-2}\ \unitfrac{dyn}{cm^2}$ and enforce a no-displacement boundary condition on the top side.
A constant pressure of magnitude $10^{-2}\ \unitfrac{dyn}{cm^2}$ is imposed on the bottom side.
The magnitude of pressure for different frequencies is shown in Figure~\ref{LayeredDomainCombine}.
This problem can be simplified vertically to a one-dimensional problem. Thus, we provide the magnitude of pressure at the vertical line $x=0.5$.
It can be seen that the numerical solution is unaffected by small permeability values, and no pressure oscillations are observed at the discontinuity. 

\begin{figure}[t!]
      \centering
      \includegraphics[width=0.9\textwidth]{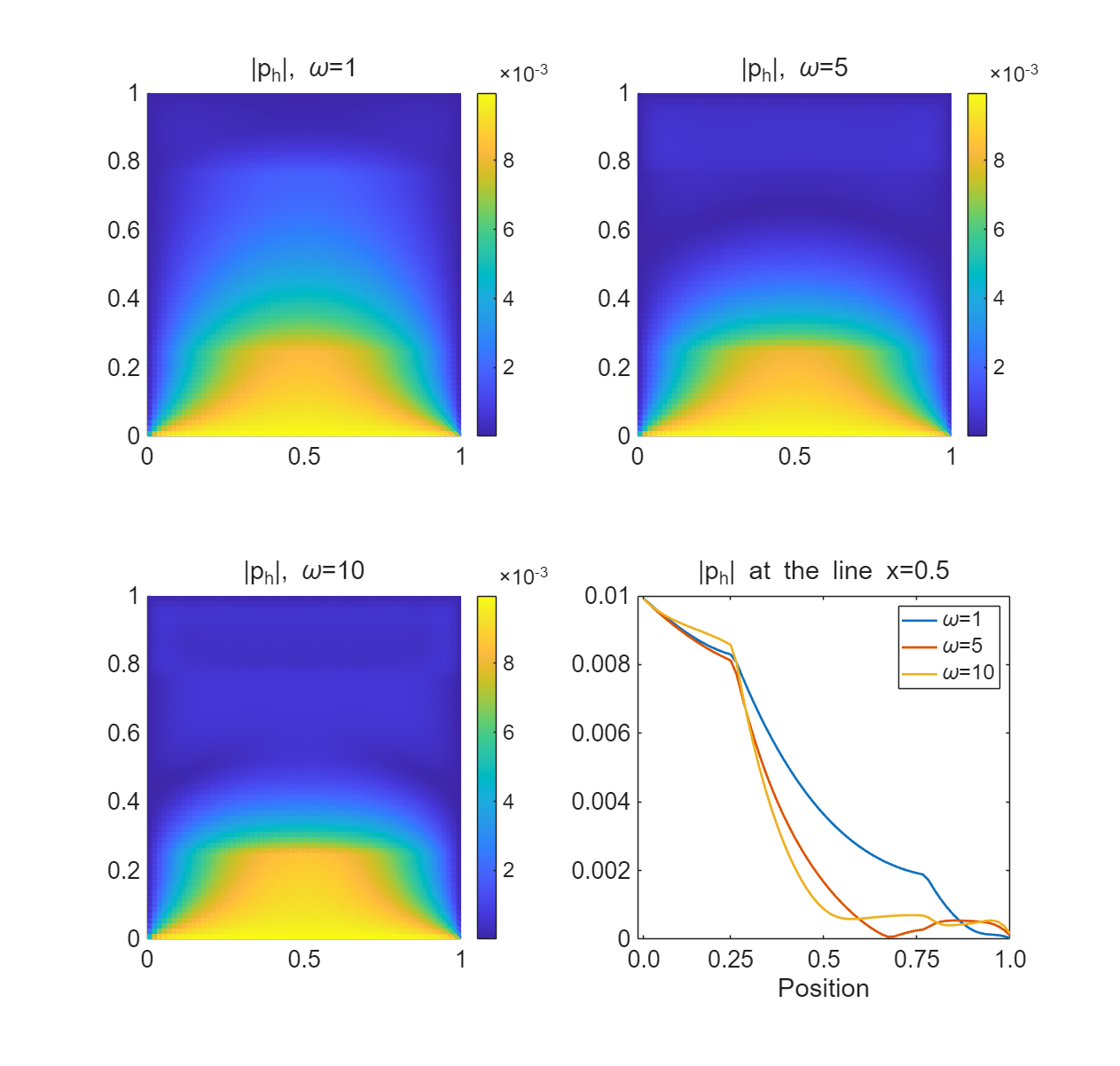}
      \caption{Layered domain problem. Numerical approximations for the magnitude of pressure. }
      \label{LayeredDomainCombine}
\end{figure}

	\section{Summary and Outlook} \label{sec:summary}

In this work, we have proposed and analyzed a finite element approximation to the fully dynamic thermo-poroelasticity problem 
in the frequency domain. Well-posedness for both the continuous and discrete formulations has been established 
by employing the Fredholm alternative and T-coercivity.
The proposed stabilized $\boldsymbol{BR}$-$\boldsymbol{RT}_0$-$P_0$-$P_1$ mixed finite element formulation avoids volumetric locking, pressure and 
temperature oscillations. The optimal order of convergence for all variables was proved. Numerical studies support the analytic results. 

Given that this paper studied a low order finite element method, 
further research will focus on addressing high order cases, their construction and analysis.

	\bibliographystyle{siamplain}
	\bibliography{./references}
	\clearpage
	%\listoftodos
\end{document}